\newcommand{\IR}{{\mathbb R}}
\theoremstyle{plain}
\newtheorem{thm}{Theorem}[section]
\newtheorem{cor}[thm]{Corollary}
\newtheorem{lem}[thm]{Lemma}
\newtheorem{prop}[thm]{Proposition}
\newtheorem{rem}[thm]{Remark}
\newtheorem{MainThm}{Theorem}
\newtheorem{Mthm}[MainThm]{Theorem}
\newtheorem*{Mthm*}{Theorem}
\newtheorem{Mcor}[MainThm]{Corollary}
\newtheorem{Mprop}[MainThm]{Proposition}
\theoremstyle{definition}
\newtheorem{defn}[thm]{Definition}
\newtheorem{conj}{Conjecture}
\newtheorem{ques}{Question}
\newtheorem*{weinsteinconj}{Weinstein Conjecture}
\newtheorem*{strongweinsteinconj}{Strong Weinstein Conjecture}
\numberwithin{equation}{section}
\def\a{\alpha}
\def\d{\delta}
\def\k{\kappa}
\def\g{\gamma}
\def\a{\alpha}
\def\d{\delta}
\def\k{\kappa}
\def\g{\gamma}
\def\RR{{\mathbb R}}
\def\d{{\mathrm{d}}}
\def\R{{R_{\alpha}}}
\def\SS{{\mathbb{S}}}
\newcommand{\gc}{\gamma}
\newcommand{\dgc}{\dot{\gc}}
\def\bar{\overline}
\setlist[itemize]{noitemsep, topsep=0pt}
\newcommand{\vast}{\bBigg@{2}}
\newcommand{\Vast}{\bBigg@{5}}
\newcommand{\RNum}[1]{\uppercase\expandafter{\romannumeral #1\relax}}
\title[The contact type conjecture]{On the contact type conjecture\\ for exact magnetic systems} 
\author{Lina Deschamps}
\address{Faculty of Mathematics and Computer Science,
	University of Heidelberg,
	Im Neuenheimer Field 205,
	69120 Heidelberg, Germany}
\email{ldeschamps@mathi.uni-heidelberg.de}
\author{Levin Maier}
\address{Faculty of Mathematics and Computer Science,
	University of Heidelberg,
	Im Neuenheimer Field 205,
	69120 Heidelberg, Germany}
\email{lmaier@mathi.uni-heidelberg.de}
\author{Tom Stalljohann}
\address{Faculty of Mathematics and Computer Science,
	University of Heidelberg,
	Im Neuenheimer Field 205,
	69120 Heidelberg, Germany}
\email{tstalljohann@mathi.uni-heidelberg.de}
\keywords{}
\subjclass[2020]{37J46, 37J55, 53D25}
\begin{document}
	\maketitle
\renewcommand{\abstractname}{Abstract}
	\begin{abstract}
In this article, we answer--for a class of magnetic systems--a question now known as the \emph{contact type conjecture}, whose origin trace back to the 1998 work of \emph{Contreras, Iturriaga, Paternain, and Paternain}~\cite{CIPP}. To this end, we explicitly construct, on any \emph{closed manifold}, an \emph{infinite-dimensional space} of exact magnetic systems, which we introduce here as \emph{magnetic systems of strong geodesic type}.

For each such system, there exists at least one \emph{null-homologous embedded periodic orbit} on every \emph{energy level}, with \emph{negative action} for energies below the \emph{strict Mañé critical value}. As a consequence, the corresponding \emph{energy surfaces} are \emph{not of contact type} below this threshold. Thus, for this class of systems, the \emph{contact type conjecture} holds true.

Moreover, for these systems, both the \emph{strict} and the \emph{lowest Mañé critical values} can be computed \emph{explicitly}—and they \emph{coincide} whenever the aforementioned periodic magnetic geodesic is \emph{contractible}, without requiring any additional assumptions on the manifold.

For this class of magnetic systems, several \emph{remarkable multiplicity results} also hold, guaranteeing arbitrarily large numbers of \emph{embedded null-homologous periodic magnetic geodesics} on \emph{every energy level}.

We illustrate the richness of this class of magnetic systems through the following examples. On any \emph{non-aspherical manifold}, there exists a \emph{dense subset} of the space of Riemannian metrics such that, for each such metric, one can construct an \emph{infinite-dimensional space} of exact magnetic fields yielding magnetic systems of \emph{strong geodesic type}. \\
Similarly, on any \emph{closed contact manifold} whose Reeb flow admits at least one null-homologous periodic orbit, one can construct an \emph{infinite-dimensional space} of Riemannian metrics such that, for each such metric, the magnetic system induced by the fixed contact form is of \emph{strong geodesic type}.
	\end{abstract}	
	\section{Introduction}
Following V.~Arnold's pioneering work \cite{ar61} in the 1960s, which placed the motion of a charged particle in a magnetic field into the context of modern dynamical systems, magnetic systems have received considerable attention over the past three decades, particularly with regard to the existence of periodic orbits. To name just a few contributions: \cite{AbbMacMazzPat17,Abbondandolo2015,ABM,Assenza24,AssBenLust16,BM24,CIPP,Fat97, Frauenfelder_Schlenk,Man,M24,maier2025geometrichydrodynamicsinfinitedimensional,Me09,Schlenk_Applications_Hofer_Geom,Sor,Tai83,Tai92a,Tai92b}.

The motion is described mathematically using the language of symplectic geometry ~\cite{Gin} as follows. Let \( (M, g) \) be a closed, connected Riemannian manifold, and let \( \sigma \in \Omega^2(M) \) be a closed two-form, called the \emph{magnetic field}. The triple \( (M, g, \sigma) \) is called a \emph{magnetic system}. The \emph{magnetic geodesic flow} \( \varPhi_{g,\sigma}^t \) is defined on the tangent bundle $TM$ as the Hamiltonian flow determined by the kinetic energy \( E(x,v) = \frac{1}{2} g_x(v, v) \) and the twisted symplectic form
\begin{equation}\label{eq: twisted symplectic form}
    \omega_\sigma := \mathrm{d}\lambda - \pi^*_{TM} \sigma,
\end{equation}
where \( \lambda \) is the metric pullback of the canonical Liouville 1-form from \( T^*M \) to \( TM \) via the metric \( g \), and \( \pi_{TM} \colon TM \to M \) is the canonical projection. The projection of a flow line of the magnetic geodesic flow \(\varPhi_{g,\sigma}^t\) on \(TM\) onto \(M\) is called a \emph{magnetic geodesic}. When \(\sigma = 0\) in~\eqref{eq: twisted symplectic form}, the magnetic geodesic flow coincides with the usual geodesic flow corresponding to the metric \(g\). We note that, unlike standard geodesics, magnetic geodesics cannot, in general, be reparametrized to have unit speed; see \Cref{s: Preliminaries}. 
Therefore, one of the main points of interest is to work out the similarities and differences between standard and magnetic geodesics.

Furthermore, by its Hamiltonian nature, the level sets of the kinetic Hamiltonian are invariant under the magnetic geodesic flow \(\varPhi^t_{g,\sigma}\). More precisely:

For a general magnetic system, the zero section corresponds to the set of rest points of the flow. For all energy levels \( \kappa \in (0, \infty) \), all of which are regular values of the kinetic Hamiltonian \( E \), the corresponding hypersurfaces
\[
\Sigma_{\kappa} := E^{-1}(\kappa),
\]
called the \emph{energy surfaces}, are invariant under the magnetic geodesic flow.
Thus, the dynamics of \(\varPhi^t_{g,\sigma}\) restrict to each energy surface \(\Sigma_\kappa\).

In particular, it is interesting to ask whether the energy surface \( \Sigma_\kappa \) is of contact type, which in case of the geodesic flow is always true. A hypersurface \( \Sigma_{\kappa} \) is said to be of \emph{contact type} if there exists a one-form \( \eta \) on \( \Sigma_{\kappa} \) that is a primitive of \( \omega_\sigma|_{\Sigma_{\kappa}} \), and such that \( \eta \) does not vanish on the line bundle
\[
\ker\left( \left. \omega_{\sigma} \right|_{\Sigma_{\kappa}} \right).
\]
The energy surface \( \Sigma_\kappa \) is of \emph{restricted contact type} if \( \eta \) extends to a one-form on \( TM \). Hypersurfaces of contact type in symplectic manifolds have been widely studied over the last four decades, beginning with the work of Weinstein~\cite{Weinstein78} and Rabinowitz~\cite{Rabinowitz78}, due to their connection with the existence of closed orbits. We provide a brief overview in \Cref{ss: illustr of the main results in light of the Weinstein conjecture}.

\subsection{The contact type conjecture and Mañé's critical values}
\label{subsection: The contact type property, periodic orbits and Mañé's critical value}

This article focuses on determining the range of energy values for which the energy hypersurface \( \Sigma_\kappa \)  is of contact type in the case of exact magnetic systems (i.e. \( \sigma = \mathrm{d}\alpha \) with $\alpha\in \Omega^1(M)$).

To achieve this, the so-called \emph{Mañé critical values}~\cite{Man} play a fundamental role. Among these, the two most significant are the \emph{strict Mañé critical value} \( c_0(M, g, \mathrm{d}\alpha) \in \mathbb{R} \) and the \emph{lowest Mañé critical value} \( c_u(M, g, \mathrm{d}\alpha) \in \mathbb{R} \); see~\eqref{eq: strict mane value} and~\eqref{eq:cu} for definitions. These values serve as energy thresholds that indicate major dynamical and geometric transitions in the magnetic geodesic flow. If \( \alpha \) is not a closed one form, then the following chain of inequalities holds (see, for example,~\cite{Abbo13Lect}):
\begin{equation}\label{eq: inequality of Manes values}
    0 < c_u(M, g, \mathrm{d}\alpha) \leq c_0(M, g, \mathrm{d}\alpha).
\end{equation}
Before proceeding we note that the values \( c_u \) and \( c_0 \) differ in general. This happens, only when the fundamental group of \( M \) is sufficiently non-abelian; see~\cite{FathiMaderna2007}. 

It is well known that for energy levels \( \kappa > c_0(M, g, \mathrm{d}\alpha) \), the corresponding energy surfaces \( \Sigma_\kappa \) are of restricted contact type (see~\cite[Cor.~2]{CIPP}). This naturally raises the question of whether \( \Sigma_\kappa \) is also of contact type for energy levels below the strict Mañé critical value.
According to~\cite[Prop.~B.1]{Co06}, if \( M \) is not a torus, then \( \Sigma_\kappa \) is not of contact type for any \( \kappa \in [c_u(L), c_0(L)) \). 

A long-standing open question--tracing back to the influential work of G. Contreras, R. Iturriaga, G.P. Paternain, and M. Paternain in~\cite{CIPP}, later developed further in~\cite{CFP10,Co06}, and posed as an open problem in 2010 by L. Macarini and G.P. Paternain in \cite[p.2]{MP10}--asks whether the hypersurface \( \Sigma_\kappa \) is of contact type in the low energy range \( 0 < \kappa < c_u(L) \). A. Abbondandolo formulated this question in 2013 explicitly as a conjecture in~\cite[below Thm.~4.2]{Abbo13Lect}:

\begin{conj}[\cite{Abbo13Lect, MP10}]\label{conj: Contact type conjecture}
\( \Sigma_{\kappa} \) is not of contact type for \( 0 < \kappa < c_u(L) \).
\end{conj}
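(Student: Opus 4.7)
My plan is to proceed by contradiction. Suppose that $\Sigma_\kappa$ is of contact type for some energy $0 < \kappa < c_u(L)$, with contact primitive $\eta \in \Omega^1(\Sigma_\kappa)$. Because $\sigma = \mathrm{d}\alpha$, the twisted form $\omega_\sigma = \mathrm{d}(\lambda - \pi_{TM}^*\alpha)$ is globally exact on $TM$, so $\eta$ differs from $\eta_0 := (\lambda - \pi_{TM}^*\alpha)|_{\Sigma_\kappa}$ by a closed one-form $\tau$ on $\Sigma_\kappa$. A direct computation with the metric duality shows that for every closed magnetic geodesic $\gamma$ of period $T$ on $\Sigma_\kappa$, lifted to $\hat\gamma(t) = (\gamma(t),\dot\gamma(t))$, one has $\int_{\hat\gamma}\eta_0 = 2\kappa T - \int_\gamma\alpha = A_{L+\kappa}(\gamma)$, the free-time Lagrangian action. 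The contact hypothesis forces the quantity $A_{L+\kappa}(\gamma) + \int_{\hat\gamma}\tau$ to have a fixed, non-zero sign across all periodic orbits of the characteristic flow. The goal is therefore to produce two periodic orbits whose combined action-plus-$\tau$-integrals have opposite signs.

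The crucial step is to extract, for every $\kappa < c_u(L)$, a periodic magnetic geodesic on $\Sigma_\kappa$ with negative action, whose homology class in $H_1(\Sigma_\kappa;\mathbb{R})$ is sufficiently controlled that no closed one-form $\tau$ can cancel the negative sign. The variational definition of $c_u(L)$ ensures that the free-time action functional on loops in the universal cover $\widetilde M$ is unbounded below; lifting the exact magnetic system yields absolutely continuous loops in $\widetilde M$ with arbitrarily negative action, whose projection to $M$ is a null-homologous closed curve of negative free-time action, although not in general a magnetic geodesic. I would then promote such a curve to a genuine critical point by one of two routes: (i) invoke Rabinowitz--Floer homology on the hypersurface $\Sigma_\kappa$, which is well defined precisely under the contact type hypothesis, and run a continuation argument in the negative-action window to detect the required orbit; or (ii) combine a Struwe-style monotonicity argument in $\kappa$ with the min-max scheme of \cite{CIPP}, now using the contact type hypothesis itself as the compactness input substituting for the standard Palais--Smale condition. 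In either route, pairing the resulting negative-action orbit with any positive-action orbit from the $\kappa > c_0$ regime would give the sign contradiction.

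The principal obstacle is the well-known failure of Palais--Smale compactness of the free-time action functional below $c_u(L)$: no unconditional construction of periodic magnetic geodesics is currently available there, which is why the present paper manufactures such orbits by restricting to the class of magnetic systems of \emph{strong geodesic type}. Carrying out the program above in full generality requires either computing Rabinowitz--Floer homology of $\Sigma_\kappa$ inside $TM$ in the low-energy, mixed-sign-action regime, where standard transversality and gradient-flow bounds for the Rabinowitz action functional are delicate, or establishing the Struwe monotonicity without invoking additional geometric structure. Both routes currently break down without extra hypotheses, which is precisely why \Cref{conj: Contact type conjecture} remains open in its full generality.
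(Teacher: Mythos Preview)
The statement you are asked to prove is a \emph{conjecture}, and the paper does not prove it in general; it verifies it only for the class of exact magnetic systems of strong geodesic type (Corollary~E), where the required null-homologous periodic orbit with negative action is produced explicitly by reparametrizing a distinguished geodesic. Your proposal is not a proof either, and you correctly acknowledge this in your final paragraph: the Palais--Smale failure of the free-time action below $c_u(L)$ is precisely the obstruction that keeps the conjecture open.

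Two points in your sketch deserve correction even as a program. First, the sentence about ``pairing the resulting negative-action orbit with any positive-action orbit from the $\kappa > c_0$ regime'' is incoherent: the contact-type question concerns a \emph{single} hypersurface $\Sigma_\kappa$, and orbits on $\Sigma_{\kappa'}$ with $\kappa' \neq \kappa$ are irrelevant to the sign of $\eta(Z_E)$ on $\Sigma_\kappa$. Second, you do not need two periodic orbits at all. The McDuff--Sullivan criterion (Proposition~A.1 in the paper) is stated for null-homologous invariant \emph{measures}, and the Liouville measure on $\Sigma_\kappa$ is always null-homologous with $\int \lambda(Z_E)\,\mathrm{d}\mu_\ell = 2\kappa > 0$. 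Hence a single null-homologous periodic orbit with \emph{non-positive} action suffices: a convex combination of its orbit measure with the Liouville measure produces a null-homologous invariant measure with zero integral, contradicting contact type. This is exactly the argument of Lemma~A.3, and it reduces the conjecture to the existence of such an orbit---which is where the genuine difficulty lies and where your proposed routes (Rabinowitz--Floer, Struwe monotonicity) indeed run into the known analytic obstacles.
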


It has been completely resolved in the case where \( M \) is a surface~\cite{ConMacPat2004}, and for certain higher-dimensional homogeneous examples~\cite[Thm.~1.6]{CFP10}. Beyond these cases, as far as the authors know, the \Cref{conj: Contact type conjecture} remains entirely open for general exact magnetic systems in dimension at least three.

In contrast, for symplectic magnetic fields on the two-sphere, there exist examples where \( \Sigma_\kappa \) is of contact type for \emph{all} energy levels (see~\cite{Ben14Contact}).\\ 

We close this subsection by noting that, by~\cite[Prop.~2.4]{ConMacPat2004}, one can affirmatively answer \Cref{conj: Contact type conjecture} if there exists a null-homologous periodic orbit of the magnetic geodesic flow with negative action (for a precise definition, see \Cref{ss: Intermezzo magnetic systems}) for all energy levels below the lowest Mañé critical value. For the sake of completeness, we include this argument in \Cref{Appendix A}.
\subsection{Main results}\label{subsection: Main Results}
The main objective of this article is to confirm \Cref{conj: Contact type conjecture} for a class of magnetic systems. 

To this end, we explicitly construct an infinite-dimensional space of exact magnetic systems for which the conjecture holds; see \Cref{Cor Contact type}.

We begin by describing the setting in which the conjecture is verified. The following two subsections—\Cref{ss: Illustration of main result in light of closed geoddesics} and \Cref{ss: illustr of the main results in light of the Weinstein conjecture}—illustrate the main result through two geometric lenses. Finally, in \Cref{ss: related results}, we discuss how our contributions relate to the existing literature.

To that end, we introduce the following setting:
\begin{defn}\label{Def: magnetic systems of n-geodesic type}
We say that a magnetic system \( (M, g, \sigma) \) is of \emph{geodesic type} if there exists a smooth embedded loop \( \gamma \) in \( M \) that is a geodesic of \( (M, g) \), and such that \( \dot{\gamma} \in \ker \sigma_{\gamma} \), that is,
\[
\dot{\gamma}(t) \in \ker \sigma_{\gamma(t)} := \left\{ v \in T_{\gamma(t)}M \;\middle|\; \sigma_{\gamma(t)}(v, w) = 0 \quad \forall w \in T_{\gamma(t)}M \right\} \quad \forall t \in \mathbb{R} \,.
\]
Such a loop \( \gamma \) is called a \emph{magnetic geodesic of geodesic type} of \( (M, g, \sigma) \).

\end{defn}
\begin{rem}
    In \Cref{Lemm: magnetic geodesic of geodesic type are geodesics and magnetic geodesics}, we see that if \(\gamma\) is a magnetic geodesic of geodesic type of \((M, g, \sigma)\), then it is indeed a geodesic of \((M, g)\) and a magnetic geodesic of \((M, g, \sigma)\).
\end{rem}
We then proceed to illustrate this definition by showing that systems of this type admit at least one embedded periodic orbit for \emph{every} energy level: 

\begin{Mprop}\label{Prop: periodic orbits in all levels}
    Let \( (M, g, \sigma) \) be a magnetic system of geodesic type. Then, for each level of the energy \( \kappa \in (0, \infty) \), there exists at least one periodic orbit of the magnetic geodesic flow \( \varPhi_{g,\sigma}^t \) with prescribed energy \( \kappa \). 
\end{Mprop}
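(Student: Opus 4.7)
The plan is to exploit the fact that, along a magnetic geodesic of geodesic type, the Lorentz force vanishes identically, so that the magnetic geodesic equation collapses to the usual geodesic equation. This is the key structural feature that makes reparametrization to arbitrary speeds possible, which is the mechanism for producing one periodic orbit on every energy level from a single embedded loop.

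Concretely, I would first recall that the magnetic geodesic equation for a curve $\gc$ in $M$ reads $\nabla_{\dgc}\dgc = Y_{\gc}(\dgc)$, where the Lorentz force $Y\colon TM\to TM$ is defined by $g_x(Y_x(v),w)=\sigma_x(v,w)$ for all $v,w\in T_xM$. Given a magnetic geodesic $\gc$ of geodesic type of $(M,g,\sigma)$, the hypothesis $\dgc(t)\in \ker\sigma_{\gc(t)}$ forces $\sigma_{\gc(t)}(\dgc(t),\cdot)\equiv 0$, hence $Y_{\gc(t)}(\dgc(t))=0$ for every $t$. Combined with $\nabla_{\dgc}\dgc=0$ (since $\gc$ is a geodesic of $(M,g)$), this immediately verifies that $\gc$ satisfies the magnetic geodesic equation, confirming the remark preceding the proposition.

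Next, for any $\kappa\in(0,\infty)$, I would produce a periodic orbit of energy $\kappa$ by rescaling. Choose the constant $c_\kappa := \sqrt{2\kappa}/|\dgc|_g>0$ (note $|\dgc|_g$ is constant because $\gc$ is a geodesic) and define the reparametrization $\gc_\kappa(t):=\gc(c_\kappa t)$. Then $\dgc_\kappa(t)=c_\kappa\,\dgc(c_\kappa t)$, so that $|\dgc_\kappa|_g=\sqrt{2\kappa}$ and $E(\gc_\kappa,\dgc_\kappa)=\kappa$. Since $\dgc_\kappa(t)$ is parallel to $\dgc(c_\kappa t)\in\ker\sigma_{\gc(c_\kappa t)}$, one still has $Y(\dgc_\kappa)\equiv 0$, while $\nabla_{\dgc_\kappa}\dgc_\kappa = c_\kappa^2\,\nabla_{\dgc}\dgc = 0$. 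Both sides of the magnetic geodesic equation therefore vanish along $\gc_\kappa$, so $\gc_\kappa$ is a periodic magnetic geodesic of period $T_\kappa=T/c_\kappa$, where $T$ is the period of $\gc$. Lifting to $TM$ via $t\mapsto (\gc_\kappa(t),\dgc_\kappa(t))$ yields a periodic orbit of $\varPhi_{g,\sigma}^t$ on $\Sigma_\kappa$, completing the proof.

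I do not expect a serious obstacle here: the argument is essentially a one-line observation, namely that the ``magnetic force'' switches off exactly along curves of geodesic type and thus cannot obstruct the familiar reparametrization trick for geodesics. The only point that deserves care, and which I would emphasize, is that generic magnetic geodesics are \emph{not} reparametrization invariant — this invariance here is a genuine consequence of $\dgc\in\ker\sigma_\gc$ and will later be crucial to interpreting these orbits across all energy levels, in particular in the regime $0<\kappa<c_u$ relevant for the contact type conjecture.
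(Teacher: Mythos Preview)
Your proof is correct and follows essentially the same approach as the paper: both arguments observe that the Lorentz force vanishes along $\gc$ (so that $\gc$ is simultaneously a geodesic and a magnetic geodesic), and then use constant-speed reparametrizations of $\gc$ to produce a periodic magnetic geodesic at every energy level. Your write-up is in fact more detailed than the paper's own proof, which dispatches the statement in two sentences after isolating the Lorentz-force observation as a separate lemma.
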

\begin{rem}\label{rem: prop periodic orbits in all levels}
     It follows directly from the proof of \Cref{Prop: periodic orbits in all levels} that, if there exists a contractible magnetic geodesic of geodesic type in $(M,g,\sigma)$ then the magnetic geodesic flow $\varPhi_{g,\sigma}^t$ has at least one contractible embedded periodic orbit at each energy level $\kappa$.
\end{rem}

In the case of an exact magnetic field, \Cref{Def: magnetic systems of n-geodesic type} can be strengthened in order to derive a much stronger statement than \Cref{Prop: periodic orbits in all levels}. For that, we will make the following convention: An embedded loop $\g$ is called \emph{coorientable} if $\g^*TM$ is an orientable bundle over the circle.

\begin{defn}\label{def: exact magnetic systems of stromng geodesic type}
An exact magnetic system \( (M, g, \d\alpha) \) of geodesic type is said to be of \emph{strong geodesic type} if it admits a coorientable, null-homologous, magnetic geodesic $\gc$ of geodesic type of $(M,g,\d\a)$ such that 
\begin{enumerate}
    \item\label{it: 1 strong geodesic defi} the $g$-norm of \(\alpha\) is maximal along \(\gamma\), that is, 
    \[
    \left\vert \alpha_{\gamma(t)} \right\vert_g = \max_{x\in M}\vert \alpha_{x} \vert_g =\Vert \alpha \Vert_{\infty} \qquad \forall t\in \RR \, ,  
    \]
\item\label{it: 2 strong geodeisic defi} the velocity of \(\gamma\) is the metric dual of \(\alpha\) along \(\gamma\) , i.e.,
\[
g_{\gamma(t)}(\dot{\gamma}(t), v) = \alpha_{\gamma(t)}(v) \quad \forall t \in \mathbb{R}, \quad \forall v \in T_{\gamma(t)}M.
\]
\end{enumerate}
Such a loop \(\gamma\) is called \emph{magnetic geodesic of strong geodesic type} of $(M,g,\d\a)$.
\end{defn}

We first comment briefly on this definition. Specifically, we emphasize that an exact magnetic system \((M, g, \mathrm{d}\alpha)\) of strong geodesic type involves a \emph{local} condition on the primitive $\alpha$ of \(\mathrm{d}\alpha\) in a small neighborhood of the loop \(\gamma\), as detailed in~\Cref{def: exact magnetic systems of stromng geodesic type}.

In this setting we have:
 \begin{Mthm}\label{thm: periodic orbits all levels}
    Let \( (M, g, \d\alpha) \) be a magnetic system of strong geodesic type. Then, for every energy level \( \kappa \in (0, \infty) \), the magnetic geodesic flow \( \varPhi_{g,\d\alpha}^t \) admits a null-homologous embedded periodic orbit with energy \( \kappa \), which has non-positive action whenever \( \kappa \in (0, c_0(M, g, \d\alpha)] \). Moreover, the strict Mañé critical value is given by 
    \[
    c_0(M, g, \d\alpha) = \frac{1}{2} \Vert \alpha \Vert_{\infty}^2. \]
   If in addition there exists a magnetic geodesic of strong geodesic type \( \gamma \) in \( (M, g, \d\alpha) \) that is contractible, then
    \[
    c_u(M, g, \d\alpha) = c_0(M, g, \d\alpha) = \frac{1}{2} \Vert \alpha \Vert_{\infty}^2.
    \]
\end{Mthm}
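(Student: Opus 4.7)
My plan is to use the magnetic geodesic of strong geodesic type $\gamma$ given by \Cref{def: exact magnetic systems of stromng geodesic type}, together with its constant-speed reparametrizations, as a canonical family of periodic orbits and to compute their actions directly from the two defining identities on $\gamma$. For each $\kappa\in(0,\infty)$, set $\lambda_\kappa:=\sqrt{2\kappa}/\Vert\alpha\Vert_\infty$ and $\gamma_\kappa(t):=\gamma(\lambda_\kappa t)$. Since $\dot\gamma\in\ker\sigma_\gamma$, the Lorentz force vanishes along $\gamma$, so the magnetic geodesic equation reduces to the geodesic equation there and affine reparametrizations stay magnetic geodesics---the same mechanism underlying \Cref{Prop: periodic orbits in all levels}. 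By construction $\gamma_\kappa$ has kinetic energy $\kappa$, and, sharing its image with $\gamma$, is embedded and null-homologous.

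Writing $T_0$ for the period of $\gamma$ and $T_\kappa = T_0/\lambda_\kappa$, the free-period action at energy $\kappa$ simplifies, using conservation of energy, to
\[
A_\kappa(\gamma_\kappa)\;=\;\int_0^{T_\kappa}\!\!\bigl(\tfrac12|\dot\gamma_\kappa|^2-\alpha(\dot\gamma_\kappa)+\kappa\bigr)\,dt\;=\;2\kappa\,T_\kappa\;-\;\int_\gamma\alpha.
\]
The two conditions of strong geodesic type give $\alpha(\dot\gamma)=g(\dot\gamma,\dot\gamma)=|\alpha|_g^2=\Vert\alpha\Vert_\infty^2$ along $\gamma$, so $\int_\gamma\alpha=\Vert\alpha\Vert_\infty^2\,T_0$ and a short substitution yields
\[
A_\kappa(\gamma_\kappa)\;=\;T_0\,\Vert\alpha\Vert_\infty\bigl(\sqrt{2\kappa}-\Vert\alpha\Vert_\infty\bigr),
\]
which is $\leq 0$ exactly when $\kappa\leq\tfrac12\Vert\alpha\Vert_\infty^2$. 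To pin down $c_0$, I would bound it from below by testing with $\gamma$ itself: on this null-homologous loop the Lagrangian is the constant $L(\gamma,\dot\gamma)= -\tfrac12\Vert\alpha\Vert_\infty^2$, so $A_k(\gamma) = T_0(k-\tfrac12\Vert\alpha\Vert_\infty^2)<0$ for every $k<\tfrac12\Vert\alpha\Vert_\infty^2$, forcing $c_0\geq\tfrac12\Vert\alpha\Vert_\infty^2$. The matching upper bound follows from the elementary pointwise inequality $\tfrac12|v|_g^2+\tfrac12\Vert\alpha\Vert_\infty^2\geq\Vert\alpha\Vert_\infty|v|_g\geq|\alpha(v)|$ (AM--GM together with Cauchy--Schwarz): integrated over any loop it gives $A_k(\eta)\geq 0$ whenever $k\geq\tfrac12\Vert\alpha\Vert_\infty^2$, hence $c_0\leq\tfrac12\Vert\alpha\Vert_\infty^2$. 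Combining the two bounds yields $c_0=\tfrac12\Vert\alpha\Vert_\infty^2$, and the non-positivity claim on $(0,c_0]$ is precisely the first computation.

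For the contractible case, the same test-loop argument applied to a contractible $\gamma$ lifts to the universal cover and gives $c_u\geq\tfrac12\Vert\alpha\Vert_\infty^2$; combining with the general chain $c_u\leq c_0$ from~\eqref{eq: inequality of Manes values} forces $c_u=c_0=\tfrac12\Vert\alpha\Vert_\infty^2$. The main obstacle I anticipate is purely bookkeeping: fixing the orientation of $\gamma$ so that $\alpha(\dot\gamma)>0$ (the opposite orientation gives a strictly positive action for all $\kappa$), and matching sign and period conventions for the action and for the Mañé values used earlier in the paper. The conceptual work is done entirely by the strong-geodesic-type hypotheses---the first condition pins down the maximum of $|\alpha|_g$ while the second converts $\alpha(\dot\gamma)$ into $|\alpha|_g^2$---which is exactly what is needed to saturate both bounds on $c_0$ and to flip the sign of the action below $c_0$.
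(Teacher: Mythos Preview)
Your proposal is correct and follows essentially the same route as the paper: constant-speed reparametrizations of the strong-geodesic-type loop $\gamma$ produce the periodic orbits, the strong-type identities give $\alpha_\gamma(\dot\gamma)=\Vert\alpha\Vert_\infty^2$ and hence the explicit action formula, and the upper bound on $c_0$ comes from the same pointwise square-completion inequality. The only cosmetic differences are that the paper parametrizes by $r$ rather than $\lambda_\kappa$ and uses the definition of $c_u$ via contractible loops in $M$ directly (no lift to the universal cover is needed); your orientation worry is also moot, since condition~\ref{it: 2 strong geodeisic defi} forces $\alpha_\gamma(\dot\gamma)=|\dot\gamma|_g^2>0$.
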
  
\begin{rem}
In the proof of \Cref{thm: periodic orbits all levels}, we use all three assumptions—conditions \ref{it: 1 strong geodesic defi} and \ref{it: 2 strong geodeisic defi} from \Cref{def: exact magnetic systems of stromng geodesic type}, as well as the assumption that the periodic magnetic geodesic \( \gamma \) is nullhomologous—to compute the strict Mañé critical value. It is not clear whether the equality \( c_0 = \frac{1}{2} \Vert \alpha \Vert_{\infty}^2 \) still holds if any of these assumptions are dropped.
\end{rem}
This class of magnetic systems exhibits rich dynamics by virtue of \Cref{thm: periodic orbits all levels}, and is significantly broader than one might initially expect, as illustrated by the following theorem and remarks:
\begin{Mthm}\label{thm: B} Let \( M \) be a closed smooth manifold. Then the following two statements hold:\begin{enumerate}
    \item \label{it: thm B 1}  Given an Riemannian metric $g$ on $M$ and a null-homologous, coorientable, simple periodic geodesic \( \gamma \) of $(M,g)$, then one can construct  an exact magnetic field \( \d\a\) such that $\gc$ is a magnetic geodesic of strong geodesic type of \( (M, g, \d\a) \).
    \item \label{it: thm B 2} Given a null-homologous, coorientable, embedded loop \( \gamma \) and a $1$-form $\alpha$ on $M$ so that \(\dot{\gamma} \in \ker(\d\a_{\gamma}) \) and \( \a_\g (\dot \g) \) is constant, then one can construct a Riemannian metric \( g \) such that \( \gamma \) is a magnetic geodesic of strong geodesic type of \( (M, g, \d\a) \).
\end{enumerate}
    In particular, in both cases, the magnetic system \( (M, g, \d\a) \) is of strong geodesic type.
\end{Mthm}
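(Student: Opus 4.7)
Parametrize $\gamma$ by arc length. Coorientability and embeddedness yield a tubular neighbourhood $U$ of $\gamma$ foliated by the normal $g$-geodesics issuing from $\gamma$. Define the $1$-form $\beta$ on $U$ by parallel-transporting $\dot\gamma^\flat$ along each such normal geodesic. Since parallel transport is a $g$-isometry, $|\beta|_g\equiv 1$ on $U$ and $\beta|_\gamma=\dot\gamma^\flat$. Let $\chi\colon M\to[0,1]$ be smooth, depending only on the normal distance to $\gamma$, equal to $1$ on $\gamma$ and compactly supported in a smaller tubular neighbourhood; set $\alpha:=\chi\beta$ (extended by zero). Then $\alpha|_\gamma=\dot\gamma^\flat$ and $|\alpha|_g=\chi\le 1$ with equality on $\gamma$, which gives conditions \ref{it: 1 strong geodesic defi} and \ref{it: 2 strong geodeisic defi} of \Cref{def: exact magnetic systems of stromng geodesic type}. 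For $\dot\gamma\in\ker(\d\alpha)_\gamma$, note that $\d\chi|_\gamma=0$ (maximum attained on $\gamma$), so $\d\alpha|_\gamma=\d\beta|_\gamma$; by the torsion-free identity $\d\beta(X,Y)=(\nabla_X\beta)(Y)-(\nabla_Y\beta)(X)$, it suffices to check $\nabla_X\beta|_\gamma=0$ for every $X\in T_{\gamma(t)}M$. For $X=\dot\gamma$, $\nabla_{\dot\gamma}\beta|_\gamma=(\nabla_{\dot\gamma}\dot\gamma)^\flat=0$ (since $\gamma$ is a $g$-geodesic); for $X$ normal to $\gamma$, $\nabla_X\beta|_\gamma=0$ because $\beta$ is, by construction, parallel along the normal geodesic with initial velocity $X$.

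\textbf{Part (2).} Set $c:=\alpha(\dot\gamma)|_\gamma$, which must be strictly positive since any compatible Riemannian $g$ satisfies $c=|\dot\gamma|_g^2$. Choose a tubular chart $(t,x)\colon U\to S^1\times D^{n-1}$ with $\gamma=\{x=0\}$ and $\dot\gamma=\partial_t|_\gamma$. Construct a local metric $g_0$ on $U$ by prescribing its $2$-jet along $\gamma$ as
\begin{equation*}
   g_{0,tt}|_\gamma = c,\quad g_{0,ti}|_\gamma = \alpha_i|_\gamma,\quad g_{0,ij}|_\gamma = \delta_{ij}+\alpha_i\alpha_j/c,
\end{equation*}
(the last expression renders the full matrix positive-definite), with first normal derivative $\partial_i g_{0,tt}|_\gamma := 2\,\partial_i\alpha_t|_\gamma$. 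The hypothesis $\dot\gamma\in\ker\d\alpha|_\gamma$ rewrites this as $2\,\partial_t\alpha_i|_\gamma = 2\,\partial_t g_{0,ti}|_\gamma$, which is precisely the condition $\Gamma^\mu_{tt}|_\gamma=0$; hence $\gamma$ is a $g_0$-geodesic. A direct computation (using $\alpha^\mu|_\gamma=\delta^\mu_t$) gives
\begin{equation*}
   \partial_i\partial_j|\alpha|_{g_0}^2\big|_{x=0} = -\partial_i\partial_j g_{0,tt}\big|_\gamma + C_{ij}(t),
\end{equation*}
where $C_{ij}$ depends only on data already fixed by the $1$-jet of $g_0$ and the $2$-jet of $\alpha$. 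Taking $\partial_i\partial_j g_{0,tt}|_\gamma$ to strictly dominate $C_{ij}$ makes this Hessian negative-definite; combined with the automatic vanishing of $\d|\alpha|_{g_0}^2|_\gamma$ (exactly the computation performed in Part~(1)), we obtain $|\alpha|_{g_0}^2\le c$ on an open sub-neighbourhood $V\subset U$ of $\gamma$, with equality only on $\gamma$. Realize the prescribed $2$-jet by any smooth metric $g_0$ on $U$.

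Extend $g_0$ by partition of unity to a Riemannian metric $\tilde g_0$ on $M$. Choose a smooth function $\phi\colon M\to[0,\infty)$ that vanishes on an open sub-neighbourhood $V'\Subset V$ of $\gamma$ and is arbitrarily large on $M\setminus V$. The conformally rescaled metric $g:=e^{2\phi}\tilde g_0$ agrees with $g_0$ to all orders on $V'$, preserving every local condition, while $|\alpha|_g^2=e^{-2\phi}|\alpha|_{\tilde g_0}^2$; by compactness of $M$, a sufficiently large $\phi$ forces $|\alpha|_g^2\le c$ globally. Thus $(M,g,\d\alpha)$ is of strong geodesic type with distinguished loop $\gamma$.

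The main obstacle is the compatibility of the four local constraints on the $2$-jet of $g_0$ in Part~(2). The hypothesis $\dot\gamma\in\ker\d\alpha|_\gamma$ carries the load: it simultaneously makes the prescribed $\partial_i g_{0,tt}|_\gamma$ compatible with $g_{0,ti}|_\gamma=\alpha_i|_\gamma$ via the geodesic equation, and forces $\d|\alpha|_{g_0}^2|_\gamma=0$ automatically, leaving the second normal derivatives of $g_{0,tt}$ as the free parameter we use to drive the Hessian of $|\alpha|_{g_0}^2$ negative. Conformal rescaling then handles the global maximum condition.
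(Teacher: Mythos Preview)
Your argument is correct and takes a genuinely different route from the paper's.

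The paper factors the construction through an intermediate notion of \emph{semi-strong geodesic type} (dropping the maximality and null-homologous conditions), proves the analogue of \Cref{thm: B} for that class via explicit coordinate formulas for $V^\alpha$ and $G$ (\Cref{prop: existence of semi strong pair}), then upgrades to strong type by a separate rescaling step: a Key Lemma shows $\d|\alpha|_g^2|_\gamma=0$ for any semi-strong system, which permits multiplying $g$ (resp.\ $\alpha$) by a positive function equal to $|\alpha|_g^2$ near $\gamma$ and $\ge|\alpha|_g^2$ elsewhere (\Cref{prop: rescaling of magnetic systems of semi strong geodesic type}). Your Part~(1) is more geometric and more direct: parallel transport of $\dot\gamma^\flat$ along normal geodesics, together with a radial cutoff, simultaneously delivers the metric-dual condition, the kernel condition (via $\nabla\beta|_\gamma=0$), and the global maximum (via $|\alpha|_g=\chi\le1$), with no separate rescaling. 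Your Part~(2) is closer in spirit to the paper but organized differently: you exploit the freedom in the second normal derivatives of $g_{tt}$ to force a \emph{local} strict maximum of $|\alpha|_g^2$, then handle the \emph{global} maximum by a conformal blow-up supported away from $\gamma$; the paper instead leans on the Key Lemma and a single global rescaling.

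What each buys: the paper's modular semi-strong/rescaling decomposition is reusable---it feeds directly into the multiplicity result \Cref{Cor: multiplicity of magnetic geodesics of strong geodesic type} and clarifies the hierarchy \eqref{eq: hierachy of magnetic systems}. Your approach is more self-contained and, in Part~(1), arguably more transparent geometrically.

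Two minor points. First, your cross-reference ``exactly the computation performed in Part~(1)'' for $\d|\alpha|_{g_0}^2|_\gamma=0$ in Part~(2) is not quite accurate: in Part~(1) you never compute $\d|\alpha|_g^2$, since $|\alpha|_g=\chi$ there by construction. The claim is nonetheless correct---with $\alpha^\mu|_\gamma=\delta^\mu_t$ one gets $\partial_i|\alpha|_{g_0}^2|_\gamma=-\partial_i g_{0,tt}+2\partial_i\alpha_t=0$ by your choice of $\partial_i g_{0,tt}$, and $\partial_t|\alpha|_{g_0}^2|_\gamma=2\partial_t\alpha_t|_\gamma=0$ since $\alpha_t|_\gamma=c$---but you should spell this out rather than point to Part~(1). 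Second, for the cutoff $\chi$ in Part~(1) to be smooth you should take it as a function of the \emph{squared} normal distance (the distance itself is not smooth across $\gamma$); this is routine, and $\d\chi|_\gamma=0$ then follows from $\d(r^2)|_{r=0}=0$ without invoking a maximum principle.
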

\begin{rem}\label{rem: comment infinite dimensional spaces}
    By the proof of \Cref{thm: B}, for any null-homologous, coorientable, embedded loop \( \gamma\) in  \( M \), one can construct an infinite-dimensional family of magnetic systems \( (M, g, \mathrm{d}\alpha) \) such that the conclusion of \Cref{thm: B} holds. Here and throughout the paper, “infinite-dimensional” is understood in the following sense: The Riemannian metric \( g \) and the magnetic field \( \mathrm{d}\alpha \) are prescribed only in a small neighborhood of the loop \( \gamma \), while outside of such a neighborhood they can be chosen arbitrarily, as long as they satisfy the maximality condition \ref{it: 1 strong geodesic defi} in Definition \ref{def: exact magnetic systems of stromng geodesic type}. In particular, “infinite-dimensional” includes an existence statement.
\end{rem}

    \begin{rem}\label{Rem: constrcution of metrics goes through also for n embededded loops}
An iteration of the construction underlying the proof of \Cref{thm: B} allows to replace everywhere in \Cref{thm: B} the curve \(\gc\) by \(n\) pairwise disjoint curves \(\gc_1, \dots, \gc_n\) having the same properties as \(\gc\).
\end{rem}
\begin{rem}\label{rem: constrcution goes through for magnetic systems of geodesic type}
   The proof of \Cref{thm: B} carries over to magnetic systems of geodesic type \((M, g, \sigma)\), as defined \Cref{Def: magnetic systems of n-geodesic type}, upon replacing “strong geodesic type” with “geodesic type” in the statement. 
\end{rem}
\begin{rem}
The condition that \( \alpha_{\gc}(\dot{\gc}) \) is constant in \ref{it: thm B 2} of \Cref{thm: B} is necessary for constructing an exact magnetic system of strong geodesic type. Indeed~\eqref{eq: energy conservation along gc} implies that, if a magnetic system is of strong geodesic type, this quantity must be constant along \( \gc \).
\end{rem}
Before turning to the main illustration of \Cref{thm: periodic orbits all levels}, we note that \Cref{Rem: constrcution of metrics goes through also for n embededded loops} can be made more precise. Namely, by iterating the construction underlying the proof of \Cref{thm: B} and applying \Cref{thm: periodic orbits all levels}, we obtain the following multiplicity result concerning the existence of embedded periodic orbits on all energy levels.
\begin{Mcor}\label{Cor: multiplicity of magnetic geodesics of strong geodesic type}
 Given \(n\) pairwise disjoint, smooth, coorientable, null-homologous, embedded loops \(\gc_1, \dots, \gc_n\) in \(M\), one can construct an infinite-dimensional space of Riemannian metrics \(g\) and 1-forms \(\a\) on \(M\) such that each loop \(\gc_1, \dots, \gc_n\) is a magnetic geodesic of strong geodesic type for the exact magnetic system \((M, g, \d\a)\). In particular, \((M, g, \d\a)\) is of strong geodesic type.

As a consequence, for every energy level \( \kappa \in (0, \infty) \), the magnetic geodesic flow \( \varPhi_{g,\d\alpha}^t \) admits at least \(n\) null-homologous embedded periodic orbits of energy \( \kappa \), which have non-positive action whenever \( \kappa \in (0, c_0(M, g, \d\alpha)] \).
\end{Mcor}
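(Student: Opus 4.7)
The plan is to iterate the construction underlying \Cref{thm: B} so that all of the loops \(\gamma_1,\dots,\gamma_n\) become magnetic geodesics of strong geodesic type for a single system \((M,g,\mathrm{d}\alpha)\), and then to invoke \Cref{thm: periodic orbits all levels} once for each \(\gamma_i\). Since the \(\gamma_i\) are pairwise disjoint and embedded, we may fix pairwise disjoint tubular neighborhoods \(U_1,\dots,U_n\) of \(\gamma_1,\dots,\gamma_n\). The construction in \Cref{thm: B} is local in nature: both the metric \(g\) and the primitive \(\alpha\) are prescribed only within a small neighborhood of a single embedded loop, so we may apply it independently inside each \(U_i\) to obtain local pairs \((g_i,\alpha_i)\) on \(U_i\) with the property that \(\gamma_i\) satisfies conditions \ref{it: 1 strong geodesic defi} and \ref{it: 2 strong geodeisic defi} of \Cref{def: exact magnetic systems of stromng geodesic type} with respect to \((g_i,\alpha_i)\) on \(U_i\).

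The next step is to glue these local data into a single pair \((g,\alpha)\) globally defined on \(M\). Using the disjointness of the \(U_i\), we pick a partition of unity or a direct extension to set \(g|_{U_i}=g_i\) and \(\alpha|_{U_i}=\alpha_i\) near each \(\gamma_i\), shrinking the neighborhoods if necessary so that the extensions agree with any prescribed background metric and primitive on \(M\setminus \bigcup_i U_i\). The only nontrivial constraint to verify is the global maximality condition \ref{it: 1 strong geodesic defi}: we need
\[
\lvert \alpha_{\gamma_i(t)}\rvert_g \;=\; \max_{x\in M}\lvert \alpha_x\rvert_g \qquad \forall\, i=1,\dots,n,\ \forall\, t\in\mathbb{R}.
\]
To achieve this, we rescale each local primitive by a constant so that the common value \(c:=|\alpha_i|_{g_i}\) along \(\gamma_i\) is independent of \(i\), and then extend \(\alpha\) outside \(\bigcup_i U_i\) in such a way that \(\lvert \alpha\rvert_g\le c\) holds pointwise. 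There is significant freedom in this extension (any rescaling by a cut-off function bounded by \(1\) works), which is what yields the infinite-dimensional space of admissible pairs \((g,\alpha)\) described in the statement, in the sense made precise in \Cref{rem: comment infinite dimensional spaces}.

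Having produced such a pair \((g,\alpha)\), each \(\gamma_i\) is by construction a magnetic geodesic of strong geodesic type of \((M,g,\mathrm{d}\alpha)\), so the system itself is of strong geodesic type. Applying \Cref{thm: periodic orbits all levels} to each \(\gamma_i\) individually gives, for every energy \(\kappa\in(0,\infty)\), a null-homologous embedded periodic orbit \(\Gamma_i^\kappa\) of \(\varPhi_{g,\mathrm{d}\alpha}^t\) with energy \(\kappa\) and action non-positive whenever \(\kappa\in(0,c_0(M,g,\mathrm{d}\alpha)]\). The distinctness of the \(n\) resulting orbits follows from the fact that \(\Gamma_i^\kappa\) is constructed from the loop \(\gamma_i\) and projects to a curve contained in the tubular neighborhood \(U_i\); since the \(U_i\) are pairwise disjoint, the projections of the orbits \(\Gamma_1^\kappa,\dots,\Gamma_n^\kappa\) are distinct, and hence so are the orbits themselves.

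The main technical difficulty is the uniformity of the maximality condition across all \(n\) loops: the individual constructions of \Cref{thm: B} come with a priori different local maxima \(|\alpha_i|_{g_i}\), and one must both rescale the local primitives consistently and control the global pointwise norm \(|\alpha|_g\) on \(M\setminus\bigcup_i U_i\) so that none of the \(\gamma_i\) loses its status as a global maximum. Once this bookkeeping is arranged, the remaining arguments are immediate consequences of \Cref{thm: B} and \Cref{thm: periodic orbits all levels}.
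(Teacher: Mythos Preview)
Your proposal is correct and follows essentially the same route as the paper: iterate the local construction from \Cref{thm: B} in pairwise disjoint tubular neighborhoods, arrange the global maximality condition~\ref{it: 1 strong geodesic defi} simultaneously for all loops, and then apply \Cref{thm: periodic orbits all levels} to each $\gamma_i$ (whose reparametrizations have image $\gamma_i(\SS^1)$, so the resulting orbits are indeed distinct). The only wrinkle is that rescaling $\alpha_i$ by a constant alone would break the duality condition~\ref{it: 2 strong geodeisic defi}, so one must also reparametrize $\gamma_i$ or rescale $g_i$---this is exactly the bookkeeping you acknowledge at the end, and the paper handles it by routing through the intermediate notion of semi-strong geodesic type via \Cref{prop: existence of semi strong pair}, \Cref{prop: rescaling of magnetic systems of semi strong geodesic type}, and \Cref{Cor: from semi strong to strong}.
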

Experts will recognize that, in light of \Cref{thm: B}, \Cref{conj: Contact type conjecture} follows  directly from \Cref{thm: periodic orbits all levels} together with the inequalities for Ma\~n\'e's critical values~\eqref{eq: inequality of Manes values}. For the reader’s convenience, we detail this argument in \Cref{l: lemma to apply contact type criterion} of \Cref{Appendix A}. It follows that \Cref{conj: Contact type conjecture} holds true for all exact magnetic systems of strong geodesic type. 
\begin{Mcor}\label{Cor Contact type}
Suppose that \( (M, g, \mathrm{d}\alpha) \) is of strong geodesic type. Then the energy level \( \Sigma_{\kappa} \) is not of contact type for any \( \kappa \in (0, c_0] \), and in particular for any \( \kappa \in (0, c_u] \).
\end{Mcor}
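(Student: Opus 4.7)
The plan is to deduce this corollary directly from \Cref{thm: periodic orbits all levels} combined with the contact type obstruction recorded as \Cref{l: lemma to apply contact type criterion} in \Cref{Appendix A}. That criterion, a version of \cite[Prop.~2.4]{ConMacPat2004} mentioned in \Cref{subsection: The contact type property, periodic orbits and Mañé's critical value}, asserts that the existence of a null-homologous periodic orbit of $\varPhi^t_{g,\d\alpha}$ on $\Sigma_\kappa$ with negative action precludes $\Sigma_\kappa$ from being of contact type. The whole corollary therefore reduces to producing such orbits on every energy level $\kappa \in (0, c_0]$ and applying the criterion.

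First I would apply \Cref{thm: periodic orbits all levels} to the magnetic system of strong geodesic type $(M, g, \d\alpha)$, obtaining, for each $\kappa \in (0, \infty)$, a null-homologous embedded periodic orbit $\gamma_\kappa$ of energy $\kappa$ whose action is non-positive whenever $\kappa \in (0, c_0]$. Since the criterion requires strict negativity, the core step is to inspect the explicit $\gamma_\kappa$ constructed in the proof of \Cref{thm: periodic orbits all levels} and to verify that its action is strictly negative for $\kappa \in (0, c_0)$; the identity $c_0 = \tfrac12 \|\alpha\|_\infty^2$ suggests that the action depends monotonically on $\kappa$ and vanishes only at the top value $\kappa = c_0$, giving strict negativity below that threshold. \Cref{l: lemma to apply contact type criterion} then yields that $\Sigma_\kappa$ is not of contact type for any $\kappa \in (0, c_0)$.

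The endpoint $\kappa = c_0$ can be handled either by inspecting the action of $\gamma_{c_0}$ directly—if the construction in the proof of \Cref{thm: periodic orbits all levels} actually produces a strictly negative value there as well—or by a limiting argument using that contact type is an open condition along a smooth family of energy surfaces: the existence of a contact primitive on $\Sigma_{c_0}$ would force contact type on $\Sigma_\kappa$ for $\kappa$ slightly below $c_0$, contradicting the preceding step. The final assertion for $\kappa \in (0, c_u]$ is then immediate from the inequality $c_u \leq c_0$ recalled in~\eqref{eq: inequality of Manes values}.

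The main technical obstacle I foresee is exactly this strict-negativity verification on $(0, c_0)$ (and possibly at the endpoint): \Cref{thm: periodic orbits all levels} only supplies non-positivity, so one must unpack the explicit periodic orbit it produces—built out of the geodesic $\gamma$ and a transverse ingredient controlled by $\kappa$ and the metric dual of $\alpha$ along $\gamma$—in order to exhibit strict sign. Once this is in hand, the corollary is essentially a formal combination of \Cref{thm: periodic orbits all levels}, \Cref{l: lemma to apply contact type criterion}, and~\eqref{eq: inequality of Manes values}.
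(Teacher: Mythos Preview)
Your overall plan is the paper's: combine \Cref{thm: periodic orbits all levels} with the contact type obstruction in \Cref{Appendix A}, then invoke \eqref{eq: inequality of Manes values} for the $c_u$ clause. That is exactly the argument indicated after the statement of \Cref{Cor Contact type}.

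The one unnecessary complication is your insistence on \emph{strict} negativity of the action. Read \Cref{l: lemma to apply contact type criterion} again: its hypothesis is a null-homologous periodic magnetic geodesic with \emph{non-positive} action $S_{L+\kappa}$. The proof interpolates the orbit measure $\mu_\Gamma$ (with $S_{L+\kappa}(\mu_\Gamma)\le 0$) against the Liouville measure $\mu_\ell$ (with $S_{L+\kappa}(\mu_\ell)=2\kappa>0$) to produce a null-homologous invariant probability measure with vanishing $\int \lambda(Z_E)\,\d\mu$, which by \Cref{prop: for contact type} already rules out contact type. So the orbit $\gamma_r$ from the proof of \Cref{thm: periodic orbits all levels}---whose action is $r(r-1)\|\alpha\|_\infty^2 \cdot T \le 0$ for all $r\in(0,1]$, i.e.\ for all $\kappa\in(0,c_0]$---feeds directly into the lemma, including the endpoint $\kappa=c_0$ where the action is exactly zero. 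Your separate treatment of $(0,c_0)$ versus $\{c_0\}$, and the proposed limiting argument via openness of the contact type condition, are therefore superfluous; the ``main technical obstacle'' you foresee does not arise.
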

In light of \Cref{Cor Contact type}, exact magnetic systems of strong geodesic type provide an appropriate framework to address \Cref{conj: Contact type conjecture}. Accordingly, we now illustrate the richness of this class of systems.

As a first indication, \Cref{Cor: multiplicity of magnetic geodesics of strong geodesic type} implies that for every closed, null-homologous, embedded loop \( \gamma \) in \( M \), there exists — in the sense of \Cref{rem: comment infinite dimensional spaces} — an infinite-dimensional space of exact magnetic systems \( (M, g, \d\alpha) \) in which \( \gamma \) is a magnetic geodesic of strong geodesic type. By \Cref{Cor Contact type}, it follows that for every such loop, there exists a system for which the \Cref{conj: Contact type conjecture} holds.

Moreover, if the geodesic flow of a closed Riemannian manifold $(M, g)$ satisfies the premises of (\ref{it: thm B 1}) in \Cref{thm: B}, then there exists an exact magnetic system of strong geodesic type for which, by \Cref{Cor Contact type}, the \Cref{conj: Contact type conjecture} holds true. This applies, for example, to a \emph{dense set of Riemannian metrics on any non-aspherical manifold}; see \Cref{Cor: non apserical Riemann the contact type cenjecture is true}.

The implication also goes in the other direction. That is, given an exact two-form $\d\a$ with non-trivial kernel, if one can associate to it a vector field whose flow meets the requirements of (\ref{it: thm B 2}) in \Cref{thm: B}, then there exists an exact magnetic system of strong geodesic type for which, by \Cref{Cor Contact type}, \Cref{conj: Contact type conjecture} holds true. For a precise statement, we refer to \Cref{ss: Illustration of it 2 in thm B beyond Reeb dynamics}.

We point out that a natural class of such vector fields is given by Reeb vector fields, which have been intensively studied over the past three decades in connection with the famous Weinstein conjecture. For a detailed discussion, we refer to \Cref{ss: illustr of the main results in light of the Weinstein conjecture}.

We conclude this subsection, and proceed to illustrate the case of rich dynamics of the geodesic flow.
\subsection{Illustration of the main results in light of the Lyusternik-Fet theorem}\label{ss: Illustration of main result in light of closed geoddesics}
   We begin by illustrating the main result in the familiar context of geodesic flow on a closed Riemannian manifold. The existence of closed, nontrivial, and possibly contractible geodesics is a classical topic in Riemannian geometry, dating back to the pioneering work of Hadamard, Poincaré, and Zoll~\cite{hadamard1898,hadamard1899,poincare1890,poincare1905,zoll1903}. In the case of contractible geodesics, this line of research was significantly advanced by Birkhoff~\cite{Birkhoff1917,Birkhoff1966}, culminating in the seminal Lusternik--Fet theorem~\cite{LystFetThm51}, and has since been developed further by many outstanding mathematicians.
   
To state the first result, we introduce some notation. We denote by \( \mathcal{G}^l(M) \) the space of smooth Riemannian metrics on \(M\), endowed with the $C^l$-topology. Recall that the term “infinite-dimensional” used below should be understood in the sense of \Cref{rem: comment infinite dimensional spaces}.
\begin{Mcor}\label{Cor: non apserical Riemann the contact type cenjecture is true}
Let \( M \) be a closed, non-aspherical\footnote{A manifold is called \emph{aspherical} if all its higher homotopy groups vanish, i.e., \(\pi_k(M) = 0\) for all \(k \geq 2\); see~\cite{Luc10} for a survey on this class of manifolds.} manifold of dimension at least two. Then, for each \( 2 \leq l < \infty \), there exists a residual subset of \( \mathcal{G}^l(M) \) such that for every Riemannian metric $g$ therein there exists an infinite-dimensional space of exact magnetic fields \( \mathrm{d}\alpha \) such that \( (M, g, \mathrm{d}\alpha) \) is of strong geodesic type. Consequently:
\begin{enumerate}
    \item \label{it:1 illustr in light of luster} For every energy level \( \kappa \in (0, \infty) \), the magnetic geodesic flow \( \varPhi_{g, \mathrm{d}\alpha}^t \) admits at least one embedded contractible periodic orbit of energy \( \kappa \), which has non-positive action if \( \kappa \in (0, c_0] \).
    \item \label{it:2 illustr in light of luster}The energy level \( \Sigma_{\kappa} \) is not of contact type for \( \kappa \in (0, c_0] \).
     \item\label{it:3 illustr in light of luster}The Mañé critical values of the exact magnetic system \((M, g, \mathrm{d}\alpha)\) are given by
\[
c_u(M, g, \mathrm{d}\alpha) = c_0(M, g, \mathrm{d}\alpha) = \frac{1}{2}\,\Vert \alpha \Vert_{\infty}^2.
\]
\end{enumerate}
\end{Mcor}

\begin{proof}
By the classical theorem of Lyusternik and Fet~\cite{LystFetThm51}, any non-aspherical Riemannian manifold \((M, g)\) admits a non-trivial contractible closed geodesic \(\gamma\). Without loss of generality, we may assume that the geodesic $\gamma$ is prime (i.e, not a multiple cover of another geodesic); otherwise, we replace it with a geometrically equivalent prime geodesic \(\widetilde{\gamma}\), satisfying \(\gamma(\mathbb{S}^1) = \widetilde{\gamma}(\mathbb{S}^1)\). By \cite[Thm.~1]{Rademacherclosedgeodesics2024}, the geodesic $\gamma$ is embedded, since \(g\) is chosen so that it belongs to the generic set considered there. The conclusion follows then from \Cref{thm: periodic orbits all levels}, \Cref{thm: B}, and \Cref{Cor Contact type}.
\end{proof}
On top of that, thanks to the recent result of Contreras-Mazzucchelli~\cite{contreras2024closedgeodesicsbettinumber}, we are able to construct (possibly non-exact) magnetic systems of geodesic type on any closed manifold with non-trivial first Betti number: 
\begin{Mcor}\label{Cor: magentic systems of geodesic type in case non zero betti number}
    Let \( M \) be a closed manifold of dimension at least two with non-trivial first Betti number. Then, for each \( 2 \leq l < \infty \), there exists a dense subset of \( \mathcal{G}^l(M) \) such that for every Riemannian metric $g$ therein there exists an infinite-dimensional space of magnetic fields \( \sigma\) such that \( (M, g, \sigma) \) is of geodesic type and the magnetic geodesic flow $\varPhi_{g,\sigma}^t$ has for each level of the energy $\k\in(0,\infty)$ at least one embedded periodic orbit of energy $\k$. 
\end{Mcor}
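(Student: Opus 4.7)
The plan is to parallel the proof of \Cref{Cor: non apserical Riemann the contact type cenjecture is true}, substituting the Lusternik--Fet theorem with the recent result of Contreras--Mazzucchelli \cite{contreras2024closedgeodesicsbettinumber}, which produces non-trivial closed geodesics on any closed Riemannian manifold $(M,g)$ with $b_1(M) > 0$ whenever $g$ lies in a $C^l$-residual subset of $\mathcal{G}^l(M)$.

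First I would invoke this theorem together with \cite[Thm.~1]{Rademacherclosedgeodesics2024}, whose generic set is also $C^l$-residual, to obtain, on a dense subset of $\mathcal{G}^l(M)$, a prime \emph{embedded} closed geodesic $\gamma$ of $(M,g)$; the intersection of two $C^l$-residual sets remains $C^l$-dense by the Baire property of $\mathcal{G}^l(M)$, which is Banach for finite $l$ and Fr\'echet for $l=\infty$. Observe that, unlike in \Cref{def: exact magnetic systems of stromng geodesic type}, the notion of geodesic type in \Cref{Def: magnetic systems of n-geodesic type} imposes no null-homology, contractibility, or coorientability condition on $\gamma$, so no additional genericity is required at this stage.

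Next I would apply part (\ref{it: thm B 2}) of \Cref{thm: B} in its geodesic-type variant indicated in \Cref{rem: constrcution goes through for magnetic systems of geodesic type}, producing a closed two-form $\sigma$ in a tubular neighborhood of $\gamma$ that satisfies $\dot\gamma \in \ker\sigma_\gamma$. Extending $\sigma$ arbitrarily outside this neighborhood as a closed two-form on $M$ yields an infinite-dimensional space of magnetic fields in the sense of \Cref{rem: comment infinite dimensional spaces}, and in particular includes non-exact representatives thanks to the assumption $b_1(M) > 0$, since one may add any closed form supported away from $\gamma$ whose de Rham class is non-trivial. For every such $\sigma$, the system $(M,g,\sigma)$ is of geodesic type by construction, so \Cref{Prop: periodic orbits in all levels} delivers an embedded periodic orbit of $\varPhi^t_{g,\sigma}$ on every energy level $\kappa \in (0,\infty)$.

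The only genuine technical point is to ensure that the closed geodesics provided by \cite{contreras2024closedgeodesicsbettinumber} can be taken embedded, which is exactly what the Baire-category intersection with the generic set of \cite[Thm.~1]{Rademacherclosedgeodesics2024} achieves. Every remaining step is a direct transcription of the arguments already used for \Cref{Cor: non apserical Riemann the contact type cenjecture is true}, with the Contreras--Mazzucchelli existence statement replacing Lusternik--Fet and with the geodesic-type variant of \Cref{thm: B} replacing its strong geodesic-type counterpart.
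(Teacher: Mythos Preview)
Your proposal follows essentially the same route as the paper: invoke Contreras--Mazzucchelli to obtain closed geodesics for a dense set of metrics, intersect with Rademacher's generic set to make the chosen prime geodesic embedded, then apply the geodesic-type variant of \Cref{thm: B} (via \Cref{rem: constrcution goes through for magnetic systems of geodesic type}) and \Cref{Prop: periodic orbits in all levels}. Two small corrections are in order, though neither affects the core argument.

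First, you cite part~(\ref{it: thm B 2}) of \Cref{thm: B}, but that part takes a $1$-form as input and constructs a metric. Here the metric $g$ is fixed (it lies in your dense set) and you need to construct the magnetic field $\sigma$; this is the content of part~(\ref{it: thm B 1}), not~(\ref{it: thm B 2}). Your description of the output (``producing a closed two-form $\sigma$'') makes clear you have the right construction in mind, just the wrong label.

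Second, your side remark that $b_1(M)>0$ guarantees non-exact representatives among the $\sigma$'s is not justified: non-exact closed $2$-forms require $H^2_{\mathrm{dR}}(M)\neq 0$, which is governed by $b_2$, not $b_1$. The hypothesis $b_1(M)>0$ is used in the corollary solely to invoke Contreras--Mazzucchelli; the statement does not claim, and the proof need not produce, non-exact magnetic fields. You can simply drop this remark.
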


\begin{proof}
By \cite[Cor.~B]{contreras2024closedgeodesicsbettinumber}, there exists an open and dense subset of \(\mathcal{G}^l(M)\) such that every Riemannian metric in this set admits (infinitely many) closed geodesics. Choosing one such geodesic, we may assume it is prime, i.e. not a multiple cover of another geodesic. By \cite[Thm.~1]{Rademacherclosedgeodesics2024}, the geodesic is embedded, 
possibly after refining the set of generic metrics by intersecting it with our chosen set (note that the resulting set remains dense as the intersection of an open dense set with a dense set is dense again). \Cref{Prop: periodic orbits in all levels} and \Cref{rem: constrcution goes through for magnetic systems of geodesic type} complete the proof.
\end{proof}
\subsection{Illustration of the main results in the context of the Weinstein conjecture}\label{ss: illustr of the main results in light of the Weinstein conjecture}
As mentioned above, hypersurfaces of contact type in symplectic manifolds have been widely studied due to their connection with the existence of closed orbits. This line of research began with the landmark results of Weinstein~\cite{Weinstein78} and Rabinowitz~\cite{Rabinowitz78} in the late 1970s, and was further developed by Viterbo~\cite{ViterboWeinstein87}, Hofer and Zehnder~\cite{HoferZehnder1987}, and Struwe~\cite{Str90} in the late 1980s.

This study is closely related to the famous Weinstein conjecture, which 
has been central to the development of what is now known as \emph{symplectic dynamics} over the past five decades~\cite{Ginzburg2005}. To state the conjecture precisely, we first introduce the necessary notation.

A \emph{contact manifold} is a pair $(M, \alpha)$, where $M$ is a closed manifold of dimension $2n+1$ and $\alpha$ is a one-form satisfying $\alpha \wedge (\d\alpha)^{n} \neq 0$. Given a contact manifold $(M, \alpha)$, there exists a unique vector field $\R$, called the \emph{Reeb vector field}, defined implicitly by the conditions $\alpha(\R) = 1$ and $\d\alpha(\R, \cdot) = 0$. Its flow, denoted by $\varPhi_{\R}^t$, is called the \emph{Reeb flow} of $(M,\alpha)$. We now state the following:

\begin{weinsteinconj}[\cite{Weinstein78}]\label{conj:Weinsteinconjecture}
Let $(M,\alpha)$ be a closed contact manifold. Then the Reeb flow of $(M,\alpha)$ admits at least one periodic orbit.
\end{weinsteinconj}

The following question can be seen as a strengthening of the Weinstein conjecture.

\begin{ques}\label{ques:strongWeinstein}
    Let $(M,\alpha)$ be a closed contact manifold. Does the Reeb flow of $(M,\alpha)$ admit at least one null-homologous periodic orbit?
\end{ques}

In dimension three, the Weinstein conjecture was resolved by the breakthrough works of Hofer~\cite{Hofer1993} and Taubes~\cite{TaubesWeinstein07}. In higher dimensions, to the best of the authors’ knowledge, it remains only partially understood.

We now state a consequence that holds under a positive answer to \Cref{ques:strongWeinstein}. This result follows directly from \Cref{thm: periodic orbits all levels}, \Cref{thm: B}, and \Cref{Cor Contact type}. We recall that the notion of “infinite-dimensional” should be understood according to \Cref{rem: comment infinite dimensional spaces}.

\begin{Mcor}\label{Cor: Strong Weinstein implies Contytc type conjecture}
    Let $(M,\alpha)$ be a closed contact manifold for which the answer to \Cref{ques:strongWeinstein} is positive. Then there exists an infinite-dimensional space of Riemannian metrics $g$ such that the exact magnetic system $(M,g,\mathrm{d}\alpha)$ is of strong geodesic type. Consequently: \\
    For every energy level \( \kappa \in (0,\infty) \), the magnetic geodesic flow \( \varPhi_{g,\mathrm{d}\alpha}^t \) admits at least one embedded null-homologous periodic orbit of energy \( \kappa \), which has non-positive action if \( \kappa \in (0,c_0] \). Moreover, the conclusions \ref{it:2 illustr in light of luster} and \ref{it:3 illustr in light of luster} of \Cref{Cor: non apserical Riemann the contact type cenjecture is true} hold true.
\end{Mcor}

We now give several classes of contact manifolds for which the answer to \Cref{ques:strongWeinstein} is known to be positive. This list is for illustration and is by no means exhaustive.

In \emph{dimension three}, Hofer proved in \cite[Thm.~1, Thm.~2]{Hofer1993} that the answer to \Cref{ques:strongWeinstein} is positive if \((M,\alpha)\) defines an \emph{overtwisted} contact structure or, in the case of \( (\SS^3,\alpha) \), a \emph{tight} contact structure. For definitions of these terms, see, for example,~\cite{Eliashberg1989, Hofer1993, Gg08}. Moreover, Eliashberg's work~\cite[Thm.~1.6.1]{Eliashberg1989} implies that the space of overtwisted contact structures on an oriented three-manifold~$M$ is homotopy equivalent to the space of plane distributions on~$M$, which highlights the abundance of such structures. As a consequence, the answer to \Cref{ques:strongWeinstein} is known to hold for a broad class of contact three-manifolds.

In \emph{dimension at least five}, the answer to \Cref{ques:strongWeinstein} is known to be positive for several classes of $(M,\alpha)$. For example, this holds when \(\alpha\) defines an \emph{overtwisted} contact structure; see~\cite[Thm.~1]{albers2009weinstein} and the comment following~\cite[Cor.~1.4]{BormanMurphyEliashberg2015}. It is also the case when \((M,\alpha)\) is a compact, simply-connected hypersurface in \(\mathbb{R}^{2n}\); see~\cite{ViterboWeinstein87}. Further instances appear in the contexts of the following works: \cite[Thm.~1.1]{AlbersFuchsMerry2015}, \cite[Thm.~3.1]{GeigesZehmisch2012} and~\cite[Cor.~4]{GeigesZehmisch2016}. 

We now turn to cases where the Weinstein conjecture holds without requiring the stronger version in \Cref{ques:strongWeinstein}. 

In \emph{dimension three}, as previously said, the conjecture was solved in this setting by~\cite{AbbasCieliebakHofer2005, Hofer1993, TaubesWeinstein07}.

In \emph{dimension at least five}, in addition to the examples already covered under the strong version, the Weinstein conjecture holds for the contact manifolds appearing in: \cite[Cor.~3]{AcuMoreno2022}, \cite{FHV89}, \cite[Cor.~1.3]{HV92}.

Combining these results with \Cref{Prop: periodic orbits in all levels} and \Cref{rem: constrcution goes through for magnetic systems of geodesic type}, we obtain the following:

\begin{Mcor}\label{Cor: Illustration of Weinstein}
    Let \((M,\alpha)\) be a closed contact manifold for which the Weinstein conjecture holds (for example, \((M,\alpha)\) belongs to one of the classes above.)\\
    Then there exists an infinite-dimensional space of Riemannian metrics \(g\) such that \((M, g, \mathrm{d}\alpha)\) is of geodesic type, and consequently the conclusion of \Cref{Prop: periodic orbits in all levels} holds.
\end{Mcor}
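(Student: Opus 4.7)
The plan is to combine the Weinstein conjecture with the construction underlying \Cref{thm: B}(\ref{it: thm B 2}), applied in the weaker ``geodesic type'' form provided by \Cref{rem: constrcution goes through for magnetic systems of geodesic type}, and then to conclude via \Cref{Prop: periodic orbits in all levels}. The main observation is that a closed Reeb orbit of $(M,\alpha)$ automatically satisfies the kinematic hypotheses that \Cref{thm: B}(\ref{it: thm B 2}) imposes on the pair $(\gamma,\alpha)$, so once we have the Weinstein orbit, the metric-construction machinery of \Cref{thm: B} runs essentially for free.

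First I would apply the Weinstein conjecture to $(M,\alpha)$ to obtain a periodic Reeb orbit, and pass to its underlying prime orbit, so as to ensure $\gamma$ is a simple embedded loop. Along $\gamma$ one has $\dot\gamma = R\circ\gamma$, and the defining Reeb equations $\alpha(R)=1$, $\d\alpha(R,\cdot)=0$ translate into
\[
\dot\gamma(t)\in\ker\d\alpha_{\gamma(t)} \qquad\text{and}\qquad \alpha_{\gamma(t)}(\dot\gamma(t))\equiv 1 \qquad\forall\,t\in\mathbb{R},
\]
which are exactly the kinematic hypotheses imposed on the pair $(\gamma,\alpha)$ in \Cref{thm: B}(\ref{it: thm B 2}).

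Next I would verify coorientability of $\gamma$. Since $(\ker\alpha,\d\alpha|_{\ker\alpha})$ is a symplectic subbundle of $TM$, it is orientable, hence so is the pullback $\gamma^*\ker\alpha\to S^1$; combined with the triviality of $\gamma^*T\gamma$, the splitting $\gamma^*TM=\gamma^*T\gamma\oplus\gamma^*\ker\alpha$ shows that $\gamma^*TM$ is orientable, i.e. $\gamma$ is coorientable. With all the necessary hypotheses in place, I would then invoke the ``geodesic type'' version of \Cref{thm: B}(\ref{it: thm B 2}) provided by \Cref{rem: constrcution goes through for magnetic systems of geodesic type} to produce a Riemannian metric $g$ on $M$ such that $\gamma$ is a magnetic geodesic of geodesic type for $(M,g,\d\alpha)$. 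The construction being purely local around $\gamma$, \Cref{rem: comment infinite dimensional spaces} yields the claimed infinite-dimensional family of such metrics, and a direct application of \Cref{Prop: periodic orbits in all levels} gives at least one embedded periodic orbit of $\varPhi_{g,\d\alpha}^t$ on every energy level $\kappa\in(0,\infty)$.

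The only (rather mild) obstacle is the bookkeeping of hypotheses for \Cref{thm: B}(\ref{it: thm B 2}) applied to the orbit supplied by the Weinstein conjecture: passing to the prime orbit to secure embeddedness, and the coorientability check above. Note that null-homology of $\gamma$ is nowhere used, which is consistent with the fact that we are assuming only the Weinstein conjecture rather than its strong version.
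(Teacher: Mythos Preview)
Your proposal is correct and follows the same approach as the paper, which simply states the corollary as an immediate consequence of combining the Weinstein conjecture with \Cref{rem: constrcution goes through for magnetic systems of geodesic type} and \Cref{Prop: periodic orbits in all levels}. You supply the bookkeeping details the paper leaves implicit (passing to the prime orbit for embeddedness, and the coorientability check---which could also be obtained more quickly by noting that every contact manifold is orientable via the volume form $\alpha\wedge(\d\alpha)^n$).
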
 

\begin{rem}\label{rem: stable Ham Weinstein}
    The Weinstein conjecture and the strong Weinstein conjecture can also be formulated in terms of stable Hamiltonian structures; see~\cite{Abbo13Lect, HoferZehnderBook} for definitions. In dimension three, the Weinstein conjecture was established in this setting by Hutchings-Taubes~\cite{HutchingsTaubes2009}. Moreover, the proof and the conclusion of \Cref{Cor: Illustration of Weinstein} extend to closed manifolds equipped with a stable Hamiltonian structure.
\end{rem}
 We conclude this subsection by highlighting a stronger result related to the Weinstein conjecture: the so called \emph{$n$ or infinity conjecture},  where we refer to~\cite{cineli2024closedorbitsdynamicallyconvex} for an precise discussion of the conjecture. We begin with the discussion of this conjecture in dimension three, where it was formulated by Hofer–Wysocki–Zehnder in~\cite{HoferWysockiZehnder2003}. The recent work~\cite[Thm. 1.1]{cristofarogardiner2024proofhoferwysockizehndersinfinityconjecture} proves this conjecture when the first Chern class of the contact structure induced by \(\alpha\) is zero: the Reeb flow of \((M, \alpha)\) has either two or infinitely many simple periodic orbits. Before stating the next illustration we want to emphasize that by~\cite[Cor.~1.7]{cristofarogardiner2024proofhoferwysockizehndersinfinityconjecture}, a vast class of closed contact manifolds \((M,\alpha)\) of dimension three have infinitely many periodic orbits. 
\begin{Mcor}
\label{Cor: 2 or infinity conjecture} 
    Let \((M,\alpha)\) be a closed contact manifold of dimension three  so that first Chern class of the contact structure induced by \(\alpha\) is zero. Then:
    \begin{enumerate}
        \item There exists an infinite-dimensional space of Riemannian metrics \(g\) such that \((M,g,\mathrm{d}\alpha)\) is of geodesic type, and for each energy level \(\kappa\), the magnetic geodesic flow \(\varPhi^t_{g,\mathrm{d}\alpha}\) has at least two disjoint embedded periodic orbits of energy \(\kappa\).
        \item If in addition \((M,\alpha)\) belongs to the list described in~\cite[Cor.~1.7]{cristofarogardiner2024proofhoferwysockizehndersinfinityconjecture}. Then for each \(n\in\mathbb{N}\), there exists an infinite-dimensional space of Riemannian metrics \(g\) such that \((M,g,\mathrm{d}\alpha)\) is of geodesic type, and for each energy level \(\kappa\), the magnetic geodesic flow \(\varPhi^t_{g,\mathrm{d}\alpha}\) has at least \(n\) disjoint embedded periodic orbits of energy \(\kappa\).
    \end{enumerate}

\end{Mcor}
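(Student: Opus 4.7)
The plan is to reduce the statement to the two-or-infinity theorem of~\cite{cristofarogardiner2024proofhoferwysockizehndersinfinityconjecture} combined with \Cref{thm: B} in its geodesic-type variant and \Cref{Prop: periodic orbits in all levels}. First, since the first Chern class of the contact structure $\ker\a$ vanishes by hypothesis, I would apply~\cite[Thm.~1.1]{cristofarogardiner2024proofhoferwysockizehndersinfinityconjecture} to obtain at least two simple periodic orbits $\gc_1,\gc_2$ of the Reeb flow $\varPhi^t_{\R}$ in case~(1); in case~(2), the refinement~\cite[Cor.~1.7]{cristofarogardiner2024proofhoferwysockizehndersinfinityconjecture} produces infinitely many simple Reeb orbits, from which I select $n$ of them $\gc_1,\dots,\gc_n$. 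Since distinct orbits of a flow have disjoint images, the selected $\gc_i$ are automatically pairwise disjoint.

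Next, I would verify the hypotheses needed to construct the Riemannian metric. Because $M$ is a closed contact $3$-manifold, it is orientable (as $\a\wedge\d\a$ is a volume form), so every embedded loop in $M$ is coorientable. Each $\gc_i$ is embedded by simplicity, and the defining identity $\d\a(\R,\cdot)=0$ of the Reeb vector field gives $\dot{\gc}_i(t)\in\ker\d\a_{\gc_i(t)}$ for all $t$, which is precisely the tangency condition of \Cref{Def: magnetic systems of n-geodesic type}. Applying the iterated version of \Cref{thm: B}(\ref{it: thm B 2}) in its geodesic-type form (see \Cref{Rem: constrcution of metrics goes through also for n embededded loops} and \Cref{rem: constrcution goes through for magnetic systems of geodesic type}) to the loops $\gc_1,\dots,\gc_n$ with the already fixed contact form $\a$, I obtain an infinite-dimensional family of Riemannian metrics $g$ on $M$, built by local modifications inside pairwise disjoint tubular neighborhoods of the $\gc_i$, such that every $\gc_i$ is simultaneously a magnetic geodesic of geodesic type of the single magnetic system $(M,g,\d\a)$.

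Finally, for each fixed energy level $\k\in(0,\infty)$ I would apply \Cref{Prop: periodic orbits in all levels} to each $\gc_i$ separately: the proof of that proposition yields a periodic magnetic orbit at energy $\k$ by constant-speed reparametrization of $\gc_i$, so the projection to $M$ of this orbit coincides with the image $\gc_i(\SS^1)$. Since the images $\gc_1(\SS^1),\dots,\gc_n(\SS^1)$ are pairwise disjoint, the resulting $n$ embedded periodic orbits of $\varPhi^t_{g,\d\a}$ at energy $\k$ are pairwise disjoint, which establishes both parts of the corollary.

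The main technical point that I would expect to require care is verifying that the iterated metric construction of \Cref{thm: B}(\ref{it: thm B 2}) passes to the geodesic-type setting \emph{with a prescribed} $\a$—in particular, one that has not been chosen to satisfy the maximality and duality conditions of \Cref{def: exact magnetic systems of stromng geodesic type}—for all $n$ loops at once. This should follow from the strict locality of the construction around each $\gc_i$: the modifications of the metric are performed inside pairwise disjoint tubular neighborhoods of the $\gc_i$, and the only compatibility needed between $g$ and $\a$ near $\gc_i$, namely the tangency $\dot{\gc}_i\in\ker\d\a$, is already supplied by the Reeb equation and does not couple different neighborhoods, so the local constructions can be glued to a single smooth metric on $M$.
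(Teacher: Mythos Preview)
Your proposal is correct and follows essentially the same route as the paper's own proof: invoke the two-or-infinity theorem of~\cite{cristofarogardiner2024proofhoferwysockizehndersinfinityconjecture} to obtain the required disjoint simple Reeb orbits, then apply \Cref{Rem: constrcution of metrics goes through also for n embededded loops}, \Cref{rem: constrcution goes through for magnetic systems of geodesic type}, and \Cref{Prop: periodic orbits in all levels}. Your write-up is in fact more explicit than the paper's, as you spell out the verifications of coorientability (via orientability of the contact manifold), embeddedness, the tangency condition $\dot{\gc}_i\in\ker\d\a$ from the Reeb equation, and disjointness of distinct flow orbits.
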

\begin{proof}
   In the case of two closed Reeb orbits, we can directly conclude the first part of the corollary from \Cref{Rem: constrcution of metrics goes through also for n embededded loops}, \Cref{rem: constrcution goes through for magnetic systems of geodesic type}, and \Cref{Prop: periodic orbits in all levels}. 

Moreover, in the second case, since these orbits are geometrically distinct integral curves of the Reeb vector field, they are pairwise disjoint and we choose $n$ of them. It then follows again by applying \Cref{Rem: constrcution of metrics goes through also for n embededded loops}, \Cref{rem: constrcution goes through for magnetic systems of geodesic type}, and \Cref{Prop: periodic orbits in all levels}.
\end{proof}
In higher dimensions, less is known about the \( n \)- or infinity-conjecture. By the recent work~\cite[Thm.~A]{cineli2024closedorbitsdynamicallyconvex}, we know that if we equip the boundary \( M^{2n-1} \subseteq \mathbb{R}^{2n} \) of a star-shaped domain with its standard contact form \( \alpha \), then the Reeb flow has at least \( n \) simple closed orbits whenever the flow is dynamically convex. For the precise notion of dynamical convexity, we refer, for example, to~\cite{cineli2024closedorbitsdynamicallyconvex} and the references therein. \\
From this result, we can conclude—by an argument following the lines of the proof of \Cref{Cor: 2 or infinity conjecture}—that

\begin{Mcor}\label{Cor: n-infty conjecture}
    Let $M^{2n-1}\subseteq \RR^{2n}$ be the boundary of a star-shaped domain equipped with it's standard contact form $\a$ so that the Reeb flow on $M^{2n-1}$ is dynamically convex. Then there exists an infinite-dimensional space of Riemannian metrics \(g\) such that \((M^{2n-1},g,\mathrm{d}\alpha)\) is of geodesic type, and for each energy level \(\kappa\), the magnetic geodesic flow \(\varPhi^t_{g,\mathrm{d}\alpha}\) has at least $n$ disjoint embedded periodic orbits of energy \(\kappa\).
\end{Mcor}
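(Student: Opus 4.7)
The plan is to follow the strategy of the proof of \Cref{Cor: 2 or infinity conjecture}, now feeding in \cite[Thm.~A]{cineli2024closedorbitsdynamicallyconvex} in place of the two-or-infinity theorem. Under the dynamical convexity hypothesis, that result supplies at least $n$ geometrically distinct simple closed Reeb orbits $\gamma_1,\dots,\gamma_n$ on $(M^{2n-1},\a)$.

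The next step is to put these $n$ loops into the form demanded by \ref{it: thm B 2} of \Cref{thm: B}, adapted to the geodesic-type setting via \Cref{rem: constrcution goes through for magnetic systems of geodesic type}. First, the orbits can be taken pairwise disjoint as subsets of $M^{2n-1}$: each $\gamma_i$ is an embedded integral curve of the nowhere-vanishing Reeb vector field, so two such integral curves either coincide set-theoretically or are disjoint, and geometric distinctness excludes the former. Second, the Reeb equations $\d\a(\R,\cdot)=0$ and $\a(\R)=1$ give exactly $\dot\gamma_i\in\ker\d\a$ and $\a_{\gamma_i}(\dot\gamma_i)\equiv 1$. Finally, since $M^{2n-1}$ is diffeomorphic to $\mathbb{S}^{2n-1}$ (the case $n=1$ being trivial), every embedded loop is automatically null-homologous and coorientable.

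Applying now the multi-curve upgrade \Cref{Rem: constrcution of metrics goes through also for n embededded loops}, in its geodesic-type version, yields an infinite-dimensional space of Riemannian metrics $g$ on $M^{2n-1}$, prescribed freely outside disjoint tubular neighbourhoods of $\gamma_1,\dots,\gamma_n$, such that each $\gamma_i$ is a magnetic geodesic of geodesic type of $(M^{2n-1},g,\d\a)$; in particular $(M^{2n-1},g,\d\a)$ is itself of geodesic type. Applying \Cref{Prop: periodic orbits in all levels} separately to each $\gamma_i$ then produces, on every energy level $\k\in(0,\infty)$, an embedded periodic magnetic geodesic whose projection to $M^{2n-1}$ sits in the chosen tubular neighbourhood of $\gamma_i$; the pairwise disjointness of these neighbourhoods forces the $n$ resulting magnetic geodesics to be pairwise disjoint as well.

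The only point demanding attention is that the local constructions around the $\gamma_i$ must be truly independent, so that shrinking their supports preserves disjointness in $M$ and hence in $TM$. This is precisely what \Cref{Rem: constrcution of metrics goes through also for n embededded loops} is designed to secure, so no substantive obstacle is anticipated beyond this bookkeeping.
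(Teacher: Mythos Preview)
Your proposal is correct and follows essentially the same route the paper indicates: invoke \cite[Thm.~A]{cineli2024closedorbitsdynamicallyconvex} to obtain $n$ simple closed Reeb orbits, observe they are pairwise disjoint as integral curves of the Reeb field, and then apply \Cref{Rem: constrcution of metrics goes through also for n embededded loops}, \Cref{rem: constrcution goes through for magnetic systems of geodesic type}, and \Cref{Prop: periodic orbits in all levels} exactly as in the proof of \Cref{Cor: 2 or infinity conjecture}. Two minor remarks: the null-homologous condition you verify is not actually needed for the geodesic-type version of the construction (only coorientability is), and the periodic orbits produced by \Cref{Prop: periodic orbits in all levels} are reparametrizations of the $\gamma_i$ themselves, so their projections to $M$ coincide with $\gamma_i(\SS^1)$ rather than merely sitting in a tubular neighbourhood---but your disjointness conclusion is unaffected.
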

\begin{rem}
    Since the magnetic systems appearing in \Cref{Cor: magentic systems of geodesic type in case non zero betti number}, \Cref{Cor: Illustration of Weinstein}, \Cref{Cor: 2 or infinity conjecture}, and \Cref{Cor: n-infty conjecture} are exact, the corresponding statements also hold for exact magnetic systems of so-called \emph{semi-strong geodesic type}, as defined later in \Cref{def: semi strong magnetic systems} — provided one replaces “geodesic type” by “semi-strong geodesic type” in the formulation of the corollaries. This yields a genuine strengthening of the results.
\end{rem}
\subsection{Outlook: Illustration of (\ref{it: thm B 2}) in \Cref{thm: B} beyond Reeb dynamics}
\label{ss: Illustration of it 2 in thm B beyond Reeb dynamics}

We emphasize that the application of (\ref{it: thm B 2}) in \Cref{thm: B} is by no means restricted to the case of Reeb vector fields and their null-homologous periodic orbits, as discussed in \Cref{ss: illustr of the main results in light of the Weinstein conjecture}. Rather, this is part of a more general framework, where the proof follows the same strategy as in \Cref{Cor: Strong Weinstein implies Contytc type conjecture}:
\begin{Mcor}
\label{Cor: application of it 2 beyond Reeb}
Let \( M \) be a closed smooth manifold. Given a vector field \( X \in \Gamma(TM) \) and a 1-form \( \alpha \) on \( M \), suppose there exists a coorientable, null-homologous periodic integral curve \( \gamma \) of \( X \) such that
\[
\dot{\gamma}(t) \in \ker (\mathrm{d}\alpha)_{\gamma(t)} \quad \text{and} \quad \alpha_{\gamma(t)}(\dot{\gamma}(t)) = 1 \quad \forall t \in \mathbb{R}.
\]
Then, one can construct an infinite-dimensional space of Riemannian metrics \( g \) such that the exact magnetic system \( (M, g, \mathrm{d}\alpha) \) is of strong geodesic type. Consequently, the three conclusions of \Cref{Cor: non apserical Riemann the contact type cenjecture is true} hold true.
\end{Mcor}
It would be interesting to identify further classes of vector fields, beyond Reeb vector fields, to which \Cref{Cor: application of it 2 beyond Reeb} applies.

\subsection{Related results}\label{ss: related results} This subsection is devoted to explaining how the results presented above fit into the existing literature. Our discussion is structured around three main themes: first, we situate our contributions concerning \Cref{conj: Contact type conjecture}; second, we examine how our multiplicity results for embedded periodic orbits on all energy levels relate to previous work; and lastly, we place our construction of embedded null-homologous periodic orbits—existing on all energy levels and with negative action below Mañé's critical value—in context.
 \medskip

\textbf{The contact type conjecture (see~\Cref{conj: Contact type conjecture})} for exact magnetic systems on closed manifolds of dimension at least three. To the authors' best knowledge, besides the already mentioned homogeneous examples from \cite{CFP10}, published in 2010, nothing else has been known in this direction, and this had remained the last unresolved case. In contrast, \emph{\Cref{Cor Contact type} and its corollaries \Cref{Cor: non apserical Riemann the contact type cenjecture is true}, \Cref{Cor: Strong Weinstein implies Contytc type conjecture}, and \Cref{Cor: application of it 2 beyond Reeb}} settle \emph{\Cref{conj: Contact type conjecture}} for infinite-dimensional spaces of exact magnetic systems on a vast class of manifolds.
\medskip

\textbf{Multiplicity results of embedded magnetic geodesics on all energy levels} are established in \Cref{Cor: multiplicity of magnetic geodesics of strong geodesic type}, \Cref{Cor: 2 or infinity conjecture}, and \Cref{Cor: n-infty conjecture}. These results hold for infinite-dimensional spaces of exact magnetic systems on a large class of closed smooth manifolds. To the authors' best knowledge, \emph{these are the first results of this kind for magnetic systems on closed manifolds of dimension at least three}. Previously, similar results were known only for \emph{almost all} energy levels in magnetic systems on surfaces; see \cite{Abbondandolo2015, AbbMacMazzPat17}. \medskip

\textbf{Constructive nature of the proof.} We emphasize that the proof of \Cref{Prop: periodic orbits in all levels} and \Cref{thm: periodic orbits all levels} is entirely \emph{constructive}: we explicitly construct the periodic orbits, rather than relying on abstract variational methods, such as the minimax principle or Palais--Smale sequences. For an overview of these classical techniques, see~\cite{Abbo13Lect} and the references therein.

This constructive approach is closely related to the construction of the infinite-dimensional spaces of exact magnetic systems of strong geodesic type in \Cref{thm: B}. To the authors' best knowledge, this constitutes the first instance in the literature where such broad classes of magnetic systems have been systematically constructed.
\medskip

\textbf{Embedded null-homologous periodic orbits on all energy levels.} We conclude this subsection by commenting on how \Cref{thm: periodic orbits all levels} fits into the existing literature—more precisely, into the ongoing investigation of periodic orbits of magnetic systems on all energy levels, which has been an intensively studied topic over the past two decades.

We begin with the \emph{main novelty} of \Cref{thm: periodic orbits all levels}. In contrast to~\cite{AssBenLust16, Assenza24, Co06}, we establish, for magnetic systems of strong geodesic type, the existence of a null-homologous \emph{embedded} periodic orbit with \emph{non-positive action} on \emph{every} subcritical energy level $\kappa \leq c_0$. It is unclear to the authors whether this can be derived from any of the results in~\cite{AssBenLust16, Assenza24, Co06}, as the contractible periodic orbits constructed therein neither necessarily have negative action for energies below the lowest Mañé critical value, nor are they necessarily embedded. Additionally, in contrast to~\cite{AssBenLust16}, we do not assume that $M$ is aspherical, and our result holds for \emph{all} energy levels, whereas their result holds only for \emph{almost all} energy levels. In contrast to~\cite{Assenza24}, we do \emph{not} require any curvature assumptions or that the magnetic field be nowhere vanishing.

Additionally, in \Cref{thm: periodic orbits all levels}, we obtain an explicit expression for the strict Mañé critical value in the exact magnetic case of strong geodesic type. Without any restriction on $\pi_1(M)$, we also show that if there exists a magnetic geodesic of strong geodesic type and a contractible loop $\gamma$ in $(M, g, \mathrm{d}\alpha)$, then the lowest and strict Mañé critical values coincide. Previously, this equality was known only under the additional assumption that $\pi_1(M)$ is amenable~\cite{FathiMaderna2007}. For example, \Cref{Cor: multiplicity of magnetic geodesics of strong geodesic type} confirms that our result extends this equality to infinite-dimensional spaces of exact magnetic systems on every manifold with \emph{non-amenable} $\pi_1(M)$.

In addition, as a corollary of \Cref{thm: periodic orbits all levels}, one sees in \Cref{Cor: multiplicity of magnetic geodesics of strong geodesic type}, \Cref{Cor: non apserical Riemann the contact type cenjecture is true}, \Cref{Cor: Strong Weinstein implies Contytc type conjecture}, and \Cref{Cor: application of it 2 beyond Reeb} that the conclusion of \Cref{thm: periodic orbits all levels} holds for infinite-dimensional spaces of exact magnetic systems on a huge class of closed smooth manifolds.

\subsection{Outline of the paper}In \Cref{s: Preliminaries}, we recall the necessary background on magnetic systems and define Mañé's critical values. \\
Then, \Cref{section: Proofs of the main results periodic orbits} is devoted to the proofs of \Cref{Prop: periodic orbits in all levels} and \Cref{thm: periodic orbits all levels}. \\
Finally, we close the paper in \Cref{s: construction of metrics and magnetic fields} with the construction that underlies the proof of \Cref{thm: B}, which is at the heart of the paper. We refer to \Cref{ss: magnetic systems of semi strong geodesic type} for an overview and the key results involved in this construction. As a side product, we also include in \Cref{ss: proof of Cor multiplicity} the proof of \Cref{Cor: multiplicity of magnetic geodesics of strong geodesic type}.
\\
\\
	\noindent \textbf{Acknowledgments:}
The authors are grateful to their advisor, P.~Albers, for his insightful discussions and continuous interest in their work. They further thank all participants of the symplectic research seminar in Heidelberg, as well as M.~Mazzucchelli, G.~Paternain, and F.~Schlenk, for valuable feedback on earlier versions of this article. In addition, they acknowledge A.~Abbondandolo, L.~Assele, V.~Assenza, G.~Benedetti, F.~Ruscelli, and R.~Siefring for helpful discussions.

L.M. gratefully acknowledges L. Asselle and V. Assenza for their warm hospitality during research visits to their home institutions. L.M. also thanks U. Frauenfelder, V. Ginzburg, and B. Gürel for helpful discussions.\\
The authors acknowledge funding by the Deutsche Forschungsgemeinschaft (DFG, German Research Foundation) – 281869850 (RTG 2229), 390900948 (EXC-2181/1) and 281071066 (TRR 191).  L.D. and L.M. would like to acknowledge the excellent working conditions and the stimulating interactions at the Erwin Schrödinger International Institute for Mathematics and Physics in Vienna, during the thematic programme \emph{``Infinite-dimensional Geometry: Theory and Applications”}, where part of this work was carried out.
\section{Preliminaries}\label{s: Preliminaries}
\subsection{Intermezzo magnetic systems}\label{ss: Intermezzo magnetic systems} 
We begin by presenting the mathematical framework used to study the dynamics of a charged particle in the presence of a magnetic field, following V. Arnold's pioneering approach~\cite{ar61}.
    
Let $(M,g)$ be a closed, connected Riemannian manifold and $\sigma\in\Omega^2(M)$ be a closed two-form. The form $\sigma$ is called \emph{magnetic field} and the triple $(M,g,\sigma)$ is called \emph{magnetic system}. This determines the skew-symmetric bundle endomorphism $Y\colon TM\to TM$, the \emph{Lorentz force}, by
\begin{equation}\label{e:Lorentz}
    g_q\left(Y_qu,v\right)=\sigma_q(u,v),\qquad \forall\, q\in M,\ \forall\,u,v\in T_qM.
\end{equation}
We call a smooth curve $\gc\colon \RR\to M$ a \emph{magnetic geodesic} of $(M,g,\sigma)$ if it satisfies \begin{equation}\label{e:mg}
		\nabla_{\dgc}\dgc= Y_{\gc}\dgc
\end{equation}
where $\nabla$ denotes the Levi-Civita connection of the metric $g$. The equation~\eqref{e:mg} reduces to the geodesic equation \(\nabla_{\dot{\gamma}} \dot{\gamma} = 0\) when \(\sigma = 0\), that is, when the magnetic form vanishes. Moreover, \eqref{e:mg} can be viewed as a linear deformation in the velocity $\dgc$ of the geodesic equation.
A central question in the study of magnetic systems is therefore to understand how magnetic geodesics compare with the standard geodesics.\\
Like standard geodesics, magnetic geodesics have constant kinetic energy $E(\gamma,\dot\gamma):=\tfrac12g_\gamma(\dot\gamma,\dot\gamma)$, and hence travel at constant speed $|\dot\gamma|:=\sqrt{g_\gamma(\dot\gamma,\dot\gamma)}$; since the Lorentz force $Y$ is skew-symmetric.
This conservation of energy reflects the Hamiltonian nature of the system, as described at the beginning of the paper. Indeed, the \emph{magnetic geodesic flow} is defined on the tangent bundle by
\[
\varPhi_{g,\sigma}^t\colon TM\to TM,\quad (q,v)\mapsto \left( \gc_{q,v}(t),\dgc_{q,v}(t)\right),\quad \forall t\in\RR,
\]
where $\gc_{q,v}$ is the unique magnetic geodesic with initial condition $(q,v)\in TM$. As shown in~\cite{Gin}, and already mentioned at the beginning of the paper, this flow is Hamiltonian with respect to the kinetic energy $E \colon TM \to \RR$ and the twisted symplectic form
\[
\omega_\sigma = \d\lambda - \pi^*_{TM}\sigma,
\]
where $\lambda$ is the metric pullback of the canonical Liouville $1$-form from $T^*M$ to $TM$ via the metric $g$, and $\pi_{TM} \colon TM \to M$ is the basepoint projection. \\
However, a key difference from standard geodesics is that magnetic geodesics with different speeds are not mere reparametrizations of unit-speed magnetic geodesics. This can be seen, for instance, from the fact that the left-hand side of \eqref{e:mg} scales quadratically with speed, while the right-hand side of \eqref{e:mg} scales only linearly. This makes it natural to study the behavior of the magnetic geodesic flow $\varPhi_{g,\sigma}$ and the geodesic flow of $(M,g)$ at varying energy levels, and to compare it to the geodesic flow of $(M,g)$.\\

In the case of an exact magnetic system $(M,g,\d\a)$, the magnetic geodesic flow admits a Lagrangian formulation, and thus also a variational formulation: The corresponding magnetic Lagrangian is
\begin{align*}
    L \colon TM \to \mathbb{R}, \quad L(q, v) := \tfrac{1}{2} |v|^2 - \alpha_q(v) .
\end{align*}
The magnetic geodesic flow \( \Phi^t_{g,\sigma} \) coincides with the Euler--Lagrange flow \( \Phi_L \) associated with the magnetic Lagrangian $L$, see~\cite{Gin}. That is, a curve \( \gamma \colon [0, T] \to M \) is a magnetic geodesic if and only if it is a critical point of the action functional \( S_L \)
\[
S_L(\gamma) := \int_0^T L(\gamma(t), \dot\gamma(t))\,\mathrm{d}t
\]
among all curves \( \delta \colon [0, T] \to M \) with \( \delta(0) = \gamma(0) \) and \( \delta(T) = \gamma(T) \).

This variational principle prescribes the length \( T \) of the time interval but leaves the energy of \( \gamma \) free. On the other hand, for any given energy level \( \kappa \in \mathbb{R} \), the curve \( \gamma \) is a magnetic geodesic with energy \( \kappa \) if and only if it is a critical point of the action functional \( S_{L+\kappa} \) among all curves \( \delta \colon [0, T'] \to M \) such that \( \delta(0) = \gamma(0) \) and \( \delta(T') = \gamma(T) \), for some arbitrary \( T' > 0 \).

We close this subsection by noting that a magnetic geodesic $\gamma$ of $(M, g, \d\alpha)$ is said to have negative action if $S_{L + \kappa}(\gamma) < 0$, where $L$ is the magnetic Lagrangian of the system~$(M, g, \d\alpha)$ and $\kappa$ is the energy of $\g$.
\subsection{Ma\~n\'e's critical values} 
This variational formulation underlies the definitions of the \emph{Mañé's critical values}, introduced in the seminal works~\cite{CIPP,Man}. These quantities can be interpreted as energy levels marking significant dynamical and geometric transitions in the Euler--Lagrange flow induced by the magnetic Lagrangian \( L \). \\

The \emph{strict Mañé critical value} is
\begin{equation}\label{eq: strict mane value}
    c_0(L) := 
    \inf\left\{ \kappa \in \mathbb{R} \,\middle|\, S_{L+\kappa}(\gamma) \geq 0 \ \forall\, T > 0,\ \forall\, \gamma \in C^\infty(\mathbb{R}/T\mathbb{Z}, M) \text{ homologous to zero} \right\}
\end{equation}
while the \emph{lowest Mañé critical value} is 
\begin{align} \label{eq:cu}
    c_u(L) &:= 
    \inf\left\{ \kappa \in \mathbb{R} \,\middle|\, S_{L+\kappa}(\gamma) \geq 0 \ \forall\, T > 0,\ \forall\, \gamma \in C^\infty(\mathbb{R}/T\mathbb{Z}, M) \text{ contractible} \right\} \, .
\end{align}

We refer to ~\cite{Abbo13Lect} and the references therein for a discussion of the relationships among these critical values, as well as the following chain of inequalities:
\[
0 \leq c_u(L) \leq c_0(L).
\]
For the sake of completeness, we also mention a geometric formulation of the strict Mañé critical value, due to~\cite{CIPP}: it is the smallest energy value containing the graph of a closed one-form on $M$:
\begin{equation}\label{d:mane1}
	c_0(L) = \inf_{\theta} \sup_{q\in M} H(q,\theta_q),
\end{equation}
where the infimum is taken over all closed one-forms $\theta$ on $M$ and $H$ is the magnetic Hamiltonian given by the Legendre dual of $L$, that is
\[
H \colon T^*M \to \mathbb{R}, \quad H(q,p) := \tfrac{1}{2}\lvert p + \alpha_q \rvert^2_q \, .\]
For $\k > c_0(L)$, the level set $\Sigma_k$ encloses the Lagrangian graph $\mathrm{gr}(-\theta)$ and is therefore non-displaceable by Gromov’s theorem~\cite{Gr85}.\\

Finally, we note that the Mañé critical value can also be defined for non-exact magnetic fields, following the works~\cite{CFP10,Me09}, though this generalization lies beyond the scope of this paper. 
\section{Proofs of \Cref{Prop: periodic orbits in all levels} and \Cref{thm: periodic orbits all levels}}\label{section: Proofs of the main results periodic orbits}
This section is devoted to the proofs of \Cref{Prop: periodic orbits in all levels} and \Cref{thm: periodic orbits all levels}. We begin by proving a lemma (see~\Cref{Lemm: magnetic geodesic of geodesic type are geodesics and magnetic geodesics}) essential for establishing \Cref{Prop: periodic orbits in all levels}, which will also be crucial for proving \Cref{thm: periodic orbits all levels}.
\subsection{Setting the stage}We begin this subsection by establishing general facts about magnetic systems \((M, g, \sigma)\) of geodesic type. 
\begin{lem}\label{Lemm: magnetic geodesic of geodesic type are geodesics and magnetic geodesics}
    Let $(M,g,\sigma)$ be a magnetic system of geodesic type and $\gc$ be a magnetic geodesic of geodesic type of $(M,g,\sigma)$. Then the Lorentz force $Y$ of the magnetic system  $(M,g,\sigma)$ vanishes on $\ker\sigma$, i.e., 
    \[
    Y_p(v)=0 \qquad \forall\,(p,v)\in \ker\sigma.
    \]
    In particular, $\gc$ is a geodesic of $(M,g)$ and a magnetic geodesic of $(M,g,\sigma)$.
\end{lem}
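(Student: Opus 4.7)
The statement splits naturally into two parts: (i) vanishing of $Y$ on $\ker\sigma$, and (ii) the consequence that $\gamma$ is simultaneously a geodesic and a magnetic geodesic. Both are essentially immediate unpackings of the definitions, so the proof will be short; the main point is to observe what the non-degeneracy of $g$ buys us.

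\textbf{Step 1 (vanishing of the Lorentz force on $\ker\sigma$).} Fix $(p,u)\in\ker\sigma$, meaning $\sigma_p(u,v)=0$ for every $v\in T_pM$. By the defining relation~\eqref{e:Lorentz} of the Lorentz force,
\[
g_p(Y_p u, v)=\sigma_p(u,v)=0 \qquad \forall\, v\in T_pM.
\]
Since $g_p$ is non-degenerate, this forces $Y_p u=0$, which is the first claim of the lemma.

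\textbf{Step 2 (consequences along $\gamma$).} By the very definition of a magnetic geodesic of geodesic type (\Cref{Def: magnetic systems of n-geodesic type}), $\gamma$ is already a geodesic of $(M,g)$, i.e.\ $\nabla_{\dot\gamma}\dot\gamma=0$, and moreover $\dot\gamma(t)\in\ker\sigma_{\gamma(t)}$ for every $t\in\mathbb R$. Applying Step~1 pointwise along $\gamma$ with $(p,u)=(\gamma(t),\dot\gamma(t))$, we obtain $Y_{\gamma(t)}\dot\gamma(t)=0$ for all $t$. Combining these two facts gives
\[
\nabla_{\dot\gamma}\dot\gamma=0=Y_\gamma\dot\gamma,
\]
so $\gamma$ satisfies the magnetic geodesic equation~\eqref{e:mg} and is therefore a magnetic geodesic of $(M,g,\sigma)$, completing the proof.

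\textbf{Anticipated obstacles.} There are essentially none: the argument is a direct consequence of the non-degeneracy of $g$ and the definition of ``geodesic type.'' The only thing to be careful about is to invoke both conditions built into \Cref{Def: magnetic systems of n-geodesic type}, namely that $\gamma$ is a $g$-geodesic \emph{and} that $\dot\gamma$ lies in $\ker\sigma$; neither alone is enough to conclude that $\gamma$ is simultaneously a geodesic and a magnetic geodesic.
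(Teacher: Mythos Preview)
Your proof is correct and follows essentially the same route as the paper's own argument: use the defining relation \eqref{e:Lorentz} together with non-degeneracy of $g$ to get $Y=0$ on $\ker\sigma$, then combine $\nabla_{\dot\gamma}\dot\gamma=0$ with $\dot\gamma\in\ker\sigma$ to conclude $\gamma$ solves \eqref{e:mg}. Your write-up is, if anything, slightly more explicit about invoking non-degeneracy of $g$; otherwise the two proofs are identical.
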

\begin{proof}
   Note that by the definition of the Lorentz force in \eqref{e:Lorentz}, we have for all \(p\in M\)
\begin{equation*}
    g_p\bigl(Y_p(v), w\bigr) = \sigma_p(v, w) = 0 \qquad \forall v\in \ker \sigma_p,\ w \in T_p M.
\end{equation*}
Therefore, the Lorentz force \(Y\) of the magnetic system \((M, g, \d\alpha)\) vanishes on the kernel of \(\sigma\), i.e.,
\begin{equation}\label{eq: Lorentz force vanishes along kernel of magnetic field}
    Y_p(v) = 0 \qquad \forall (p,v)\in \ker \sigma.
\end{equation}
As \(\gamma\) is, by~\Cref{Def: magnetic systems of n-geodesic type}, a geodesic of \((M, g)\) and moreover $\dot\g$ lies in the kernel of $\sigma$ by assumption, we can conclude from~\eqref{eq: Lorentz force vanishes along kernel of magnetic field} that
\[
\nabla_{\dot\gamma}\dot\gamma = 0 = Y_{\gamma}\bigl(\dot\gamma\bigr)
\]
which finishes the proof.
\end{proof}
\subsection{Proof of \Cref{Prop: periodic orbits in all levels}}
 Consider \((M, g, \sigma)\) a magnetic system of geodesic type, and let \(\gamma\) be a magnetic geodesic of geodesic type of \((M, g, \sigma)\). By \Cref{Lemm: magnetic geodesic of geodesic type are geodesics and magnetic geodesics}, the embedded loop \(\gamma\) is both a geodesic of \((M, g)\) and a magnetic geodesic of \((M, g, \sigma)\). We conclude that \(\gamma\), along with all its constant-speed reparametrizations, are simultaneously embedded geodesics and magnetic geodesics. This finishes the proof of \Cref{Prop: periodic orbits in all levels}.
\subsection{Preparation for the proof of \Cref{thm: periodic orbits all levels}}

We begin by fixing some notation. Let \( (M, g, \mathrm{d}\alpha) \) be a magnetic system of strong geodesic type, and let \( \gamma \) be a magnetic geodesic of strong geodesic type of \( (M, g, \mathrm{d}\alpha) \). Consider all constant-speed reparametrizations of \( \gamma \), denoted by
\begin{equation}\label{eq: definition gamma_r}
    \gamma_r \colon t \mapsto \gamma(r \cdot t) \quad \text{for } r > 0 \, .
\end{equation}

We close this subsection with the following useful observation, which will be helpful in the proof that follows. By \Cref{def: exact magnetic systems of stromng geodesic type}, the $g$-norm of \( \alpha \) is maximal along $\gc$, thus also along every constant-speed reparametrization \( \gamma_r \) of \( \gamma \); that is,
\begin{equation}\label{eq: norm along gcr max}
    \left| \alpha_{\gamma_r(t)} \right|_g = \| \alpha \|_{\infty} \quad \forall\, t \in \mathbb{R} \, .
\end{equation}

\subsection{Proof of \Cref{thm: periodic orbits all levels}} 
Let $(M, g, \d\alpha)$ be a magnetic system of strong geodesic type. We begin by proving that the strict Mañé critical value $c_0(M, g, \d\alpha)$ is bounded from above by $\frac{1}{2}\|\alpha\|_{\infty}^2$. Indeed, by the definition of the strict Mañé critical value in~\eqref{eq: strict mane value}, it is sufficient to prove that the Lagrangian $L + \k$ for $\k := \frac{1}{2} \| \a\|_{\infty}^2$ is nonnegative. This is the case because for all $(p,v)\in TM$ it holds that:
\[
L(p,v) + \k = \frac{1}{2}g_p(v,v) - \alpha_p(v) + \k \ge \frac{1}{2}|v|_g^2 - |v|_g \|\alpha\|_{\infty} + \frac{1}{2}\|\alpha\|_{\infty}^2 = \frac{1}{2}\bigl(|v|_g - \|\alpha\|_{\infty}\bigr)^2 \ge 0.
\]
In order to prove that the previously obtained upper bound on the Mañé critical value is actually an equality, i.e.,
\begin{equation}\label{eq: vert alpha vert = strict value}
    c_0(M, g, \d\alpha) = \frac{1}{2}\|\alpha\|_{\infty}^2,
\end{equation}
it suffices, by definition~\eqref{eq: strict mane value}, to show that for all \(\kappa < \frac{1}{2}\|\alpha\|_{\infty}^2\), there exists a null-homologous magnetic geodesic of \((M, g, \d\alpha)\) with prescribed energy \(\kappa\) and negative action.

To this end, let \(\gamma\) be a magnetic geodesic of strong geodesic type of \((M, g, \d\alpha)\) with prescribed energy \(\kappa_{\gc} \in (0, \infty)\). Thus, it has speed \(\sqrt{2\kappa_{\gc}}\). Since \(\gamma\) is of strong geodesic type, by item~\ref{it: 1 strong geodesic defi} in \Cref{def: exact magnetic systems of stromng geodesic type}, the norm of \(\alpha\) is maximal along \(\gamma\), i.e.,
\[
|\alpha_{\gamma}|_g = \|\alpha\|_{\infty}.
\]
Moreover, by definition (see~(\ref{it: 2 strong geodeisic defi}) in \Cref{def: exact magnetic systems of stromng geodesic type}), the velocity \(\dot\gamma\) is the metric dual of \(\alpha\) along \(\gamma\), which implies 
\begin{equation}\label{eq: vert alpha along gc}
    \|\alpha\|_{\infty}^2 = |\a_\g |_g^2 = |\dot \gamma|_g^2 = g_\g(\dot \g , \dot \g) = \a_\g(\dot \g) = 2 \kappa_\g \, .
\end{equation}
So in particular \(\kappa_{\gc} = \frac{1}{2}\,\|\alpha\|_{\infty}^2\). Furthermore, by an argument following the lines of the proof of \Cref{Prop: periodic orbits in all levels}, the constant-speed reparametrization \(\gamma_r\) defined as in~\eqref{eq: definition gamma_r} is an embedded periodic magnetic geodesic of \((M, g, \d\alpha)\). Combining this with the expression for speed and energy for $\gc$ from~\eqref{eq: vert alpha along gc}, shows that
\begin{equation}\label{eq: energy of gc_r and speed of gc_r}
    |\dot{\gamma}_r|_g = r \cdot \|\alpha\|_{\infty}
    \qquad \text{and}\qquad
    E(\gamma_r, \dot{\gamma}_r) = \frac{1}{2}\vert \dgc_r\vert_g^2=\frac{r^2}{2}\,\|\alpha\|_{\infty}^2\, .
\end{equation}
For the convenience of the reader, we recall that these quantities are mutually dependent integrals of motion, since the energy itself is an integral of motion.
The magnetic Lagrangian
\[
L + \frac{r^2}{2}\,\|\alpha\|_{\infty}^2
\]
evaluated along \(\bigl(\gamma_r, \dot{\gamma}_r\bigr)\) reads as
\begin{equation}\label{eq: langrang evaluated along gc_r}
    L(\gamma_r, \dot{\gamma}_r) + \frac{r^2}{2}\,\|\alpha\|_{\infty}^2 = \frac{1}{2}\, g_{\gamma_r}(\dot{\gamma}_r, \dot{\gamma}_r)
    - \alpha_{\gamma_r}(\dot{\gamma}_r) + \frac{r^2}{2}\,\|\alpha\|_{\infty}^2.
\end{equation}
By \eqref{eq: norm along gcr max}, together with~\eqref{eq: vert alpha along gc} and~\eqref{eq: energy of gc_r and speed of gc_r}, the expression for the magnetic Lagrangian evaluated along $(\gc_r,\dgc_r)$ in~\eqref{eq: langrang evaluated along gc_r} becomes
\begin{equation}\label{eq: magnetic Langrangian along gc_r}
     L(\gamma_r, \dot{\gamma}_r) + \frac{r^2}{2}\,\|\alpha\|_{\infty}^2 = r^2\,\|\alpha\|_{\infty}^2
     - r\,\|\alpha\|_{\infty}^2= r\,(r-1)\,\|\alpha\|_{\infty}^2\, .
\end{equation}
As \(r \neq 0\) and \(\|\alpha\|_{\infty} \neq 0\), this value is negative whenever \(r < 1\). 
    So in summary, we have proven that for all \(\kappa < \frac{1}{2}\,\|\alpha\|_{\infty}^2\), there exists a null-homologous periodic magnetic geodesic of \((M, g, \d\alpha)\) of energy \(\kappa\) and negative action, namely the magnetic geodesic $\g_r$  of $(M,g,\d\a)$ for $0 < r < 1$ 
        given by $\k= \frac{r^2}{2} \|\a\|^2_{\infty} \,$.
This also proves the equality~\eqref{eq: vert alpha vert = strict value}.\\
We finish this proof by noting that if $\gamma$ is contractible instead of merely null-homologous, then by an argument following precisely the lines of the proof up to this point and using the definition of the lowest Mañé critical value~\eqref{eq:cu}, we can conclude that
\[
c_u(M, g, \d\alpha) = \frac{1}{2}\|\alpha\|_{\infty}^2,
\]
which, together with the previously proven equality in~\eqref{eq: vert alpha vert = strict value}, completes the proof.

\section{Proofs of \Cref{thm: B} and \Cref{Cor: multiplicity of magnetic geodesics of strong geodesic type}} \label{s: construction of metrics and magnetic fields}
This section is devoted to the proof of \Cref{thm: B}, and to showing how the multiplicity result in \Cref{Cor: multiplicity of magnetic geodesics of strong geodesic type} can be derived from it. To that end, we introduce in \Cref{ss: magnetic systems of semi strong geodesic type} a weakened version of exact magnetic systems of strong geodesic type, called magnetic systems of \emph{semi-strong geodesic type}, defined in \Cref{def: semi strong magnetic systems}. We then show that a weakened version of \Cref{thm: B} holds in this setting (\Cref{prop: existence of semi strong pair}), along with a rescaling property of these systems (\Cref{prop: rescaling of magnetic systems of semi strong geodesic type}) that allows us, under a mild assumption, to rescale a magnetic system of semi-strong geodesic type into one of strong geodesic type.

In \Cref{ss: proof of thm b}, we derive \Cref{thm: B} from the previously mentioned \Cref{prop: existence of semi strong pair} and \Cref{prop: rescaling of magnetic systems of semi strong geodesic type}, under the aforementioned mild assumption.

\Cref{ss: constrcution of smei strong systems} and \Cref{ss: rescaling} are devoted to the proofs of \Cref{prop: existence of semi strong pair} and \Cref{prop: rescaling of magnetic systems of semi strong geodesic type}, respectively.

We then conclude the paper in \Cref{ss: proof of Cor multiplicity} by deriving \Cref{Cor: multiplicity of magnetic geodesics of strong geodesic type} as a consequence of the proofs of \Cref{prop: existence of semi strong pair} and \Cref{prop: rescaling of magnetic systems of semi strong geodesic type}.
\subsection{Exact magnetic systems of semi-strong geodesic type}\label{ss: magnetic systems of semi strong geodesic type}
In order to prove \Cref{thm: B}, we introduce the following definition, which refines \Cref{Def: magnetic systems of n-geodesic type} and weakens \Cref{def: exact magnetic systems of stromng geodesic type}. At the end of this subsection, we comment more precisely on how these three definitions are related.
\begin{defn}
\label{def: semi strong magnetic systems}
An exact magnetic system \( (M, g, \d\alpha) \) of geodesic type is said to be of \emph{semi-strong geodesic type} if it admits a coorientable magnetic geodesic of geodesic type $\gc$ of $(M,g,\d\a)$ such that  the velocity of \(\gamma\) is the metric dual of \(\alpha\) along \(\gamma\) , i.e.,
\[
g_{\gamma(t)}(\dot{\gamma}(t), v) = \alpha_{\gamma(t)}(v) \quad \forall t \in \mathbb{R}, \quad \forall v \in T_{\gamma(t)}M.
\]
Such a loop \(\gamma\) is called \emph{magnetic geodesic of semi-strong geodesic type} of $(M,g,\d\a)$.
\end{defn}
We begin by establishing that, on any given smooth closed manifold, the space of exact magnetic systems of semi-strong geodesic type is fairly large. This is displayed by explicitly constructing such systems in the following result:
\begin{prop}
\label{prop: existence of semi strong pair}
Let $\gamma$ be an embedded, coorientable smooth loop $\g$ in $M$. Then the following hold: 
\begin{enumerate}[label=(\arabic*)]
    \item \label{it: 1 prop existnece of semi strong pair}For a given a Riemannian metric $g$ on $M$ so that $\g$ is a geodesic of $(M, g) \,$, one can construct an infinite dimensional space of $1$-forms $\a$ on $M$ so that $\g$ is a magnetic geodesic of semi-strong geodesic type of $(M,g , \d \a) \,$.
    \item \label{it: 2 prop existnece of semi strong pair}For a given a $1$-form $\a$ on $M$ so that 
    \[
    \dot \g(t) \in \ker \d \a_{\g(t) } \qquad\text{and} \qquad \a_{\g(t)}(\dot \g(t)) = \mathrm{const.} > 0 \qquad \forall t\in \RR
    \]
  one can construct an infinite dimensional space of Riemannian metrics $g$ on $M$ so that $\g$ is a magnetic geodesic of semi-strong geodesic type of $(M, g , \d \a) \,$.
\end{enumerate}
In particular, in both cases, the magnetic system \( (M, g, \d\a) \) is of semi-strong geodesic type.
\end{prop}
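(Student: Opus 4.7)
My plan is to use the embeddedness and coorientability of $\gamma$ to fix a tubular neighborhood $U$ of $\gamma$ with adapted coordinates $(t, x^1, \ldots, x^{n-1})$ in which $t \in \RR/L\ZZ$ parametrizes $\gamma = \{x = 0\}$ with $\partial_t|_\gamma = \dot\gamma$, and the $x^i$ trivialize the (orientable) normal bundle. The three conditions making $\gamma$ a magnetic geodesic of semi-strong geodesic type---that $\gamma$ is a geodesic of $(M,g)$, that $\dot\gamma \in \ker\d\alpha_\gamma$, and that $g_\gamma(\dot\gamma, \cdot) = \alpha_\gamma$---unpack, in these coordinates, into pointwise linear constraints on the coefficients of the target object (the 1-form $\alpha$ or the metric $g$) and their first normal derivatives along $\gamma$. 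I will solve these constraints explicitly on $U$ and extend to $M$ by a cutoff (for $\alpha$) or a partition of unity (for $g$); the arbitrariness of the extension and of the higher-order coefficients yields the infinite-dimensional family in the sense of \Cref{rem: comment infinite dimensional spaces}.

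For part (\ref{it: 1 prop existnece of semi strong pair}), write $\alpha = a(t,x)\,\d t + \sum_i b_i(t,x)\,\d x^i$ and set $c := |\dot\gamma|^2_g$, which is constant since $\gamma$ is a geodesic. The duality condition along $\gamma$ fixes $a(t,0) = c$ and $b_i(t,0) = g_{0i}|_\gamma$, while $\dot\gamma \in \ker\d\alpha_\gamma$ becomes $\partial_{x^i} a|_\gamma = \partial_t b_i|_\gamma$. The geodesic hypothesis gives, via $\Gamma^k_{00}|_\gamma = 0$, the relation $\partial_{x^i} g_{00}|_\gamma = 2\partial_t g_{0i}|_\gamma$, so the explicit ansatz $a = \tfrac12 g_{00} + \tfrac{c}{2}$, $b_i = g_{0i}$ satisfies all constraints simultaneously on $U$. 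Multiplying by a smooth cutoff $\chi$ equal to $1$ on a smaller tube and supported in $U$ produces a global $\alpha \in \Omega^1(M)$; adding any 1-form that vanishes to second order along $\gamma$ in the normal variables and preserves the first-jet kernel condition gives the infinite-dimensional space.

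For part (\ref{it: 2 prop existnece of semi strong pair}), write $\alpha = A(t,x)\,\d t + \sum_i B_i(t,x)\,\d x^i$. The hypotheses become $A(t,0) = c > 0$ and $\partial_t B_i(t,0) = \partial_{x^i} A(t,0)$. Define $g$ on $U$ by
\[
g_{00}(t,x) = c + 2 \sum_i \partial_t B_i(t,0)\,x^i + \mathrm{h.o.t.}, \qquad g_{0i} = B_i, \qquad g_{ij} = \lambda\,\delta_{ij},
\]
with $\lambda > 0$ large enough (by a Schur complement argument using $c > 0$) that the matrix is positive definite throughout $U$. The duality condition along $\gamma$ is then immediate, while the geodesic condition $\Gamma^k_{00}|_\gamma = 0$ unpacks to $\partial_t g_{00}|_\gamma = 0$ and $\partial_{x^i} g_{00}|_\gamma = 2\partial_t g_{0i}|_\gamma$, both automatic from the ansatz and the hypothesis on $\alpha$. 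A partition of unity glues $g$ with any auxiliary background metric on $M \setminus U$; the freedom in the higher-order terms and in the background metric yields the infinite-dimensional family.

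The main obstacle lies in part (\ref{it: 2 prop existnece of semi strong pair}), where one must ensure that the block matrix defining $g$ remains positive definite throughout $U$, not just along $\gamma$, and that the gluing preserves both the values and the first jet of $g$ along $\gamma$. The first point is handled by taking $\lambda$ sufficiently large, using crucially $c > 0$; the second by arranging the cutoff in the partition of unity to equal $1$ on a smaller tube around $\gamma$. The compatibility hypothesis $\dot\gamma \in \ker\d\alpha_\gamma$ together with the constancy of $\alpha_\gamma(\dot\gamma)$ is precisely what makes the geodesic and duality conditions simultaneously solvable, and both assumptions are essential.
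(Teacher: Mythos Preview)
Your proof is correct and follows essentially the same strategy as the paper: work in tubular coordinates around the coorientable loop, translate the semi-strong conditions into first-jet constraints on the coefficients (the paper's Lemma~4.4), solve these explicitly, and extend globally by a cutoff. The only notable differences are cosmetic: in part~(\ref{it: 1 prop existnece of semi strong pair}) you take $a=\tfrac12 g_{00}+\tfrac{c}{2}$ and $b_i=g_{0i}$ as full functions while the paper writes down first-order Taylor polynomials, and in part~(\ref{it: 2 prop existnece of semi strong pair}) you ensure positive definiteness via a large-$\lambda$ Schur complement on the normal block whereas the paper uses an explicit $B^TB$ factorization along $\gamma$ and invokes openness of the positive-definite cone.
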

The proof of \Cref{prop: existence of semi strong pair} will be given in \Cref{ss: constrcution of smei strong systems}. We begin by observing that for a given loop $\g$ to be a magnetic geodesic of semi-strong geodesic type of $(M,g , \d\a)$ imposes only local conditions on $g$ and $\a$ near $\g \,$, that is altering $g$ respectively $\a$ outside a neighborhood of $\g$ will not change this property. Moreover, following line by line the computation in~\eqref{eq: vert alpha along gc}, the following identity holds for any magnetic geodesic $\gc$ of semi-strong geodesic type of $(M, g, \d\a)$:
\begin{equation}\label{eq: energy conservation along gc}
    |\a_\g|_g^2 = |\dot \g|_g^2 = \a_\g(\dot \g) 
\end{equation}
and this expression is constant in $t$ since $\g$ is a geodesic of $(M,g) $.
This naturally raises the question of when the quantity on the left-hand side of~\eqref{eq: energy conservation along gc} attains its maximum along $\gc$. As we will see, this is closely related to a key structural property of the space of semi-strong exact magnetic systems. 

Somewhat surprisingly, this space on a given manifold is more flexible than one might initially expect. In particular, it is closed under certain rescalings of the Riemannian metric and the magnetic field. This is made precise in the following result:
\begin{prop}\label{prop: rescaling of magnetic systems of semi strong geodesic type}\
Given an exact magnetic system $(M, g, \d\a)$ of semi-strong geodesic type and a magnetic geodesic $\gc$ of semi-strong geodesic type of $(M, g, \d\a)$, one can construct smooth strictly positive functions $\varrho_1  , \, \varrho_2$ on $M$ such that the following holds:
\begin{enumerate}
    \item \label{it 1: rescaling} For $\tilde g:= \varrho_1 \cdot g$ the curve $\gc$ is a magnetic geodesic of semi-strong geodesic type of $(M,\tilde g,\d\a )$ so that the $\tilde g$-norm  of $\alpha$ is maximal along $\gc$, that is
    \[
    \vert \a_{\gc(t)}\vert_{\tilde g}=\max_{x\in M}\vert \a_x \vert_{\tilde g}=: \Vert \alpha \Vert_{\infty} \qquad \forall t\in \RR\,.
    \]
    \item \label{it 2: rescaling}For $\tilde \a:= \varrho_2 \cdot \a$ the curve $\gc$ is a magnetic geodesic of semi-strong geodesic type of $(M, g,\d\tilde\a )$ so that the $g$-norm of  $\Tilde\a$ is maximal along $\gc$, that is
    \[
    \vert \tilde\a_{\gc(t)}\vert_{g}= \max_{x\in M}\vert \tilde\a_x \vert_g=:\Vert \tilde\alpha \Vert_{\infty} \qquad \forall t\in \RR\,.
    \]
\end{enumerate}
In particular, in both cases, the magnetic systems \( (M,\tilde g, \d\a) \) and \( (M, g, \d\tilde\a) \) are of semi- strong geodesic type.
\end{prop}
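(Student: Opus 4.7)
Set $c_0 := |\alpha_{\gc(t)}|_g^2$, which by~\eqref{eq: energy conservation along gc} is a strictly positive constant independent of $t$, and denote by $\alpha^\sharp$ the $g$-metric dual of $\alpha$. The plan is to exhibit two explicit formulas depending only on $(g,\alpha,\gc)$ and a single auxiliary non-negative function, namely
\[
\varrho_1 := \frac{|\alpha|_g^2}{c_0} + C\phi \qquad\text{and}\qquad \varrho_2 := \sqrt{\frac{c_0}{|\alpha|_g^2 + C\phi}}\, ,
\]
for any fixed $C>0$ and any smooth $\phi\colon M \to \RR_{\geq 0}$ vanishing exactly on $\gc(\RR)$ (such a $\phi$ is easily produced by combining a squared-distance function in Fermi normal coordinates along $\gc$ with a partition of unity). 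The crux of the argument---and what I expect to be the main technical point---is the identity $\d(|\alpha|_g^2)|_{\gc} = 0$. Without it, $|\alpha|_g^2$ would vary to first order transversely to $\gc$ and no smooth rescaling whose differential is forced to vanish on $\gc$ could cancel this growth, so the maximality assertions would fail.

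To prove this identity, I would use metric compatibility to write $V(|\alpha|_g^2) = 2\, g(\nabla_V \alpha^\sharp, \alpha^\sharp)$ for any $V \in T_{\gc(t)}M$, together with the torsion-free formula
\[
\d\alpha(V, W) = g(\nabla_V \alpha^\sharp, W) - g(\nabla_W \alpha^\sharp, V)\, .
\]
Setting $W = \dgc$ and exploiting the three defining features of a magnetic geodesic of semi-strong type---namely $\alpha^\sharp|_{\gc} = \dgc$, $\dgc \in \ker \d\alpha|_{\gc}$, and $\nabla_{\dgc}\dgc = 0$---one first obtains $\nabla_{\dgc}\alpha^\sharp|_{\gc} = \nabla_{\dgc}\dgc = 0$ (since the covariant derivative of $\alpha^\sharp$ along $\gc$ agrees with that of its restriction $\alpha^\sharp\circ\gc = \dgc$), and then $g(\nabla_V\alpha^\sharp, \dgc) = g(\nabla_{\dgc}\alpha^\sharp, V) = 0$, so that $V(|\alpha|_g^2)|_{\gc} = 0$ for every $V$.

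Next, I would verify that $\varrho_1$ and $\varrho_2$ are smooth, strictly positive, equal to $1$ on $\gc$, and have vanishing differential along $\gc$. Smoothness is manifest; strict positivity follows because on $\gc$ both functions reduce to $1$, whereas off $\gc$ the term $C\phi$ is strictly positive (so $\varrho_1 \geq C\phi > 0$ and the denominator of $\varrho_2$ stays strictly positive); the normalization $\varrho_i|_{\gc}=1$ uses $|\alpha|_g^2|_{\gc}=c_0$ and $\phi|_{\gc}=0$; and $\d\varrho_i|_{\gc}=0$ combines the identity $\d(|\alpha|_g^2)|_{\gc}=0$ with the automatic vanishing $\d\phi|_{\gc}=0$ (as $\phi$ attains its minimum on $\gc$).

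From these four properties both conclusions follow directly. For~\ref{it 1: rescaling}, the normalization $\varrho_1|_{\gc}=1$ yields the semi-strong duality $\tilde g(\dgc, v) = \varrho_1\, g(\dgc, v) = \alpha(v)$; the condition $\dgc \in \ker \d\alpha$ is metric-independent; and the conformal-change identity $\tilde\nabla_{\dgc}\dgc = 2(\dgc f)\dgc - |\dgc|_g^2\, \nabla f$ with $e^{2f}=\varrho_1$, together with $\d\varrho_1|_{\gc}=0$, keeps $\gc$ a $\tilde g$-geodesic. Maximality then reduces to the manifest inequality $|\alpha|_{\tilde g}^2 = |\alpha|_g^2/\varrho_1 = c_0\,|\alpha|_g^2/(|\alpha|_g^2 + C c_0 \phi) \leq c_0$, with equality precisely on $\gc$. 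For~\ref{it 2: rescaling}, the duality is again immediate from $\varrho_2|_{\gc}=1$; the kernel condition $\dgc \in \ker \d\tilde\alpha|_{\gc}$ reduces to the vanishing of $(\d\varrho_2 \wedge \alpha + \varrho_2\,\d\alpha)(\dgc, v)$, which is immediate from $\d\varrho_2|_{\gc}=0$ and $\dgc \in \ker \d\alpha|_{\gc}$; and maximality is again the clean identity $|\tilde\alpha|_g^2 = \varrho_2^2\,|\alpha|_g^2 = c_0\,|\alpha|_g^2/(|\alpha|_g^2 + C\phi) \leq c_0$, with equality on $\gc$.
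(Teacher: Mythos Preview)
Your proof is correct and follows the same three-step strategy as the paper: first establish the key identity $\d(|\alpha|_g^2)\big|_\gc=0$; second observe that any rescaling by a positive function with $\varrho|_\gc=1$ and $\d\varrho|_\gc=0$ preserves the semi-strong property; third write down explicit $\varrho_1,\varrho_2$ built from $|\alpha|_g^2$. The implementations, however, differ in an interesting way. The paper proves the key identity and the geodesic-preservation in tubular coordinates, reducing both to the combinatorics of $\partial_\ell g_{11}$, $\partial_1 g_{\ell 1}$ and $\partial_\ell V_1^\alpha$ via \Cref{lemma: equivalent conditions for weak geodesic type}; you do both intrinsically, using $\nabla_{\dgc}\alpha^\sharp=\nabla_{\dgc}\dgc=0$ together with the torsion-free identity for $\d\alpha$ for the key lemma, and the conformal-change formula for the Levi-Civita connection for the geodesic check. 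For the explicit functions, the paper (after normalizing to unit speed) takes an abstract smooth extension $\tilde\varrho\ge |\alpha|_g^2$ agreeing with $|\alpha|_g^2$ on a compact neighborhood of $\gc$ and sets $\varrho_1=\tilde\varrho$, $\varrho_2=\tilde\varrho^{-1/2}$; your choice $|\alpha|_g^2/c_0+C\phi$ with $\phi\ge 0$ vanishing exactly on $\gc$ is a concrete instance of this, and the minimum-at-$\gc$ observation neatly forces $\d\phi|_\gc=0$ for free. The paper's route has the advantage of reusing the local-coordinate machinery already developed for \Cref{prop: existence of semi strong pair}; yours is self-contained and coordinate-free, and in particular makes transparent exactly which of the three defining properties of semi-strong type is used at each step of the key lemma.
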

\begin{rem}
    For readability, we slightly abuse notation by omitting the dependence of the maximum norms on the chosen Riemannian metric in \ref{it 1: rescaling} and \ref{it 2: rescaling} of \Cref{prop: rescaling of magnetic systems of semi strong geodesic type}.
\end{rem}
\Cref{prop: rescaling of magnetic systems of semi strong geodesic type} exhibits, in particular, the distinction between exact magnetic systems of strong and semi-strong geodesic type. Specifically, given an exact magnetic system $(M, g, \d\a)$ of semi-strong geodesic type, after a suitable rescaling of either $g$ or $\a$, the resulting magnetic systems $(M, \tilde{g}, \a)$ and $(M, g, \d\a)$, as constructed in \Cref{prop: rescaling of magnetic systems of semi strong geodesic type}, satisfy items \ref{it: 1 strong geodesic defi} and \ref{it: 2 strong geodeisic defi} in \Cref{def: exact magnetic systems of stromng geodesic type}. 

However, this alone does not imply that these systems are of strong geodesic type, since it would require the magnetic geodesic $\gc$ to be null-homologous in $M$. By adding this assumption, we can conclude the following:
\begin{cor}\label{Cor: from semi strong to strong}
   Let the setting be as in \Cref{prop: rescaling of magnetic systems of semi strong geodesic type}. If, in addition, the magnetic geodesic $\gc$ of semi-strong geodesic type is null-homologous in $M$, then $\gc$ is a magnetic geodesic of strong geodesic type of the systems $(M, \tilde{g},\d \a)$ and $(M, g, \d\tilde\a)$ constructed in \Cref{prop: rescaling of magnetic systems of semi strong geodesic type}.\\
  In particular, both magnetic systems \( (M, \tilde{g}, \d\a) \) and \( (M, g, \d\tilde{\a}) \) are of strong geodesic type.
\end{cor}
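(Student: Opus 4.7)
The plan is to verify, condition by condition, that after the rescaling provided by Proposition~\ref{prop: rescaling of magnetic systems of semi strong geodesic type}, the curve $\gc$ satisfies all the requirements listed in Definition~\ref{def: exact magnetic systems of stromng geodesic type} for being a magnetic geodesic of strong geodesic type in each of the rescaled systems $(M, \tilde g, \d\a)$ and $(M, g, \d\tilde\a)$. Since all of the substantive analytic content is already contained in Proposition~\ref{prop: rescaling of magnetic systems of semi strong geodesic type}, the argument reduces to a careful bookkeeping of what each definition requires.

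First, I would invoke Proposition~\ref{prop: rescaling of magnetic systems of semi strong geodesic type} directly: it asserts that $\gc$ is a magnetic geodesic of semi-strong geodesic type of both $(M, \tilde g, \d\a)$ and $(M, g, \d\tilde\a)$. Unwinding Definition~\ref{def: semi strong magnetic systems}, this yields that in both rescaled systems $\gc$ is a coorientable magnetic geodesic of geodesic type whose velocity is the metric dual of the relevant $1$-form. This immediately covers the coorientability, the geodesic-type, and the metric-dual condition~\ref{it: 2 strong geodeisic defi} of Definition~\ref{def: exact magnetic systems of stromng geodesic type}. Next, items~\ref{it 1: rescaling} and~\ref{it 2: rescaling} of the same proposition give exactly the maximality of the relevant norm along $\gc$, i.e.\ condition~\ref{it: 1 strong geodesic defi} of Definition~\ref{def: exact magnetic systems of stromng geodesic type}, for the respective rescaled systems.

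The only remaining property to check is that $\gc$ is null-homologous in $M$ in each of the two rescaled systems. However, being null-homologous is a purely topological property of $\gc$ as an embedded submanifold of $M$, independent of the choice of Riemannian metric or magnetic field. Since null-homology of $\gc$ is an explicit hypothesis of the corollary, it is automatically inherited by both $(M, \tilde g, \d\a)$ and $(M, g, \d\tilde\a)$. Combining these three observations verifies every clause of Definition~\ref{def: exact magnetic systems of stromng geodesic type}, so $\gc$ is a magnetic geodesic of strong geodesic type of both rescaled systems, and consequently both systems themselves are of strong geodesic type. There is no genuine obstacle here beyond an attentive matching of definitions; the real work was already carried out in the proof of Proposition~\ref{prop: rescaling of magnetic systems of semi strong geodesic type}, where the rescaling functions $\varrho_1, \varrho_2$ are explicitly constructed.
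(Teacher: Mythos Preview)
Your proof is correct and follows exactly the same approach as the paper: the corollary is stated immediately after a paragraph explaining that Proposition~\ref{prop: rescaling of magnetic systems of semi strong geodesic type} already furnishes items~\ref{it: 1 strong geodesic defi} and~\ref{it: 2 strong geodeisic defi} of Definition~\ref{def: exact magnetic systems of stromng geodesic type}, so that adding the null-homologous hypothesis is all that remains. Your careful bookkeeping of which clause of the definition is covered by which part of the proposition is precisely the intended argument.
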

We close this subsection by using \Cref{Cor: from semi strong to strong} to clearly distinguish between magnetic systems of semi-strong geodesic type and those of strong geodesic type. As a starting point, we mention that \Cref{Def: magnetic systems of n-geodesic type}, \Cref{def: exact magnetic systems of stromng geodesic type}, and \Cref{def: semi strong magnetic systems} yield the following natural hierarchy of sets of classes of magnetic systems on a closed smooth manifold:
\begin{equation}\label{eq: hierachy of magnetic systems}
\{\text{strong geodesic type}\} \subseteq \{\text{semi-strong geodesic type}\} \subseteq \{\text{geodesic type}\}\, .
\end{equation}
We note that \Cref{Cor: from semi strong to strong} guarantees that the first inclusion in \eqref{eq: hierachy of magnetic systems} is strict. The second inclusion is strict as well, since many non-exact magnetic systems of geodesic type fail to be semi-strong due to the magnetic field not being exact. See, for instance, \Cref{Cor: magentic systems of geodesic type in case non zero betti number} and \Cref{rem: stable Ham Weinstein}.
\subsection{Proof of \Cref{thm: B}}\label{ss: proof of thm b} 
The statement of \Cref{thm: B} follows directly from \Cref{Cor: from semi strong to strong}, where we assume that the curve $\gc$ in \Cref{prop: existence of semi strong pair} and \Cref{prop: rescaling of magnetic systems of semi strong geodesic type} is null-homologous in $M$.
\subsection{The construction of exact magnetic systems of semi-strong geodesic type}\label{ss: constrcution of smei strong systems}
This subsection is devoted to the proof of \Cref{prop: existence of semi strong pair}. As the conditions in \Cref{def: semi strong magnetic systems} are local in nature, it suffices to construct the $1$-form~$\alpha$ (in~\ref{it: 1 prop existnece of semi strong pair}) and the Riemannian metric~$g$ (in~\ref{it: 2 prop existnece of semi strong pair}) in a neighborhood of the loop~$\gc$. The precise meaning of this localization will be clarified in the course of the proof.
To this end, we introduce a suitable system of coordinates around the embedded, coorientable loop~$\gc$ in $M$, in which the conditions from \Cref{def: semi strong magnetic systems} can be expressed entirely locally; see \Cref{lemma: equivalent conditions for weak geodesic type}. 

From now on, fix a coorientable embedded loop \( \g \) in \( M \) of period \( T > 0 \). For brevity, we set \( \SS^1 := \mathbb{R} / T \mathbb{Z} \); we will not mention \( T \) explicitly again.
We consider the normal bundle~$\nu_\gamma$ of the embedded submanifold $\gamma(\SS^1) \subset M$. Recall from Subsection \ref{subsection: Main Results} that coorientability of the loop $\g$ just means that the pullback bundle $\g^*TM$ over $\SS^1$ is orientable.
The splitting of vector bundles
\[
\gamma^*TM = T\SS^1 \oplus \nu_\gamma 
\]
shows that orientability of $\g^*TM$ and $\nu_\g $ are equivalent. As every orientable vector bundle over the circle is trivial (see~\cite[Prop.~23.14]{Bott-Tu}), we conclude that $\nu_\g$ is a trivial bundle.\\
Therefore, we can choose a tubular neighborhood $U_\g$ of $\g$ in $M$ of the form
\begin{equation}\label{eq: tub neib U gamma}
    U_\g \cong \SS^1 \times \IR^{m-1}, \quad \text{where } m := \dim(M),
\end{equation}
in which the loop $\g$ is given in local coordinates by
\begin{equation}\label{eq: gamma in local coord in tub nbgh}
    \g(t) = (t, \mathbf{0}) = (t, 0, \ldots, 0) \in \SS^1 \times \IR^{m-1}, \qquad \text{for all } t \in \SS^1.
\end{equation}
The image of $\g$ corresponds to the zero section $\SS^1 \times \{\mathbf{0}\}$ of the tubular neighborhood. Moreover, in the local coordinates given by \eqref{eq: gamma in local coord in tub nbgh}, the derivative of $\g$ is constant and given by
\begin{equation}\label{eq: deriative of gamma in tub neighb}
    \dot \g(t) = e_1 = (1, \mathbf{0}) \in \IR^m, \qquad \forall \, t \in \SS^1 \,.
\end{equation}
Let $g$ be a Riemannian metric on the tubular neighborhood $U_\g$ of $\gc$, as given in~\eqref{eq: tub neib U gamma}. From now on, we identify $U_\g$ with $\SS^1 \times \IR^{m-1}$ without further comment. The metric $g$ can then be described by a smooth map
\[
G = (g_{ij}) \colon \SS^1 \times \IR^{m-1} \rightarrow \IR^{m \times m},
\]
taking values in the space of symmetric positive-definite matrices. To make this precise, let $\langle \cdot , \cdot \rangle$ denote the standard Euclidean inner product on $\IR^m$.\\ Then, for all $(t, \mathbf{x}) = (t, x_2, \ldots, x_m) \in \SS^1 \times \IR^{m-1}$ and all $v_1, v_2 \in \IR^m$, the metric $g$ satisfies
\begin{equation}\label{eq: repres of g in Ugc through G}
    g_{(t, \mathbf{x})}(v_1, v_2) = \langle G(t, \mathbf{x}) \cdot v_1, v_2 \rangle.
\end{equation}
Similarly, let $\alpha$ be a $1$-form on $\SS^1 \times \IR^{m-1}$. We denote by $V^\alpha$ the metric dual of $\alpha$ with respect to the Euclidean inner product. That is,
\begin{equation}\label{eq: expression alpha through V}
    \alpha_{(t, \mathbf{x})}(v) = \langle V^\alpha(t, \mathbf{x}), v \rangle \qquad \forall\, (t, \mathbf{x}) \in \SS^1 \times \IR^{m-1}, \; v \in \IR^m.
\end{equation}
With this notation in place, we can now state the following: 
\begin{lem}
\label{lemma: equivalent conditions for weak geodesic type}
In the notation introduced above, let $\gamma$ be a smooth coorientable embedded loop in $M$ with tubular neighborhood $U_\gamma$ as in~\eqref{eq: tub neib U gamma}, equipped with a Riemannian metric $g$ (represented as in~\eqref{eq: repres of g in Ugc through G} by $G$) and a $1$-form $\alpha$ (represented as in~\eqref{eq: expression alpha through V} by its Euclidean metric dual $V^\alpha$).\\
Then the following statements hold:
\begin{enumerate}
    \item\label{it: dual} The Euclidean metric dual of $\alpha$ along $\gc$ is $\dot\gamma$, that is,
    \[
    \alpha_{\gamma(t)}(\cdot) = g_{\gamma(t)}(\dot\gamma(t), \cdot) \qquad \forall\, t \in \SS^1 \, ,
    \]
    if and only if the first column of $G$ coincides with $V^\alpha$ along $\SS^1 \times \{\mathbf{0}\}$, that is
    \[
    V^\alpha(t, \mathbf{0}) = G(t, \mathbf{0}) \cdot e_1 \qquad \forall\, t \in \SS^1 \, ,
    \]
    where $e_1$ denotes the first standard unit vector.
    \item\label{it: direction} The velocity vector $\dot\gamma$ lies in the kernel of $\mathrm{d}\alpha$, that is,
    \[
    \dot\gamma(t) \in \ker \mathrm{d}\alpha_{\gamma(t)} \qquad \forall\, t \in \SS^1 \, ,
    \]
    if and only if $V^\a$ satisfies 
    \[
    \partial_1 V^\alpha_\ell = \partial_\ell V^\alpha_1 \qquad \text{along } \SS^1 \times \{\mathbf{0}\} \, , \quad \forall\, \ell = 1, \ldots, m \, .
    \]
    \item\label{it: condition on geodesic} The loop $\gamma$ is a geodesic of $(M, g)$ if and only if
    \begin{equation}
    \label{eq: geodesic equivalent condition}
        0 = 2\, \partial_1 g_{\ell 1} - \partial_\ell g_{11} \qquad \text{along } \SS^1 \times \{\mathbf{0}\} \, , \quad \forall\, \ell = 1, \ldots, m \, .
    \end{equation}
\end{enumerate}
\end{lem}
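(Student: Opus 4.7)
The proof is essentially a direct translation of the three intrinsic conditions appearing in \Cref{def: semi strong magnetic systems} and of the geodesic equation into their Euclidean-coordinate representations via $G$ and $V^\a$. The plan is to treat the three items independently, since each one is an equivalence involving a different piece of data, and in each case the identification follows from the normal-form expressions $\g(t)=(t,\mathbf{0})$ and $\dot\g(t)=e_1$ from \eqref{eq: gamma in local coord in tub nbgh} and \eqref{eq: deriative of gamma in tub neighb}.

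For \ref{it: dual}, I would substitute the representations \eqref{eq: repres of g in Ugc through G} and \eqref{eq: expression alpha through V} into both sides of the identity $\a_{\g(t)}(\cdot) = g_{\g(t)}(\dot\g(t),\cdot)$. The left-hand side becomes $\langle V^\a(t,\mathbf{0}),v\rangle$ and the right-hand side becomes $\langle G(t,\mathbf{0})\cdot e_1,v\rangle$. Requiring equality for all $v\in\RR^m$ is then equivalent, by non-degeneracy of the Euclidean inner product, to the vectorial identity $V^\a(t,\mathbf{0}) = G(t,\mathbf{0})\cdot e_1$.

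For \ref{it: direction}, I would write $\a=\sum_{i=1}^m V^\a_i\,\d x_i$ in the chosen coordinates (with $x_1=t$) and compute $\d\a=\sum_{i<j}(\partial_i V^\a_j-\partial_j V^\a_i)\,\d x_i\wedge\d x_j$. Since $\dot\g=\partial_1$ along $\g$, the condition $\dot\g(t)\in\ker\d\a_{\g(t)}$ unpacks to $\d\a_{(t,\mathbf{0})}(e_1,\partial_\ell)=\partial_1 V^\a_\ell-\partial_\ell V^\a_1=0$ for every $\ell=1,\ldots,m$ (automatic for $\ell=1$), yielding exactly the stated identity along $\SS^1\times\{\mathbf{0}\}$.

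For \ref{it: condition on geodesic}, the plan is to use the geodesic equation in local coordinates, $\ddot\g^k+\Gamma^k_{ij}\dot\g^i\dot\g^j=0$. Since $\dot\g=e_1$ is constant in these coordinates, $\ddot\g=0$ and $\dot\g^i\dot\g^j=\delta_{i1}\delta_{j1}$, so the geodesic equation along $\g$ reduces to $\Gamma^k_{11}(\g(t))=0$ for every $k$. Inserting the Levi-Civita formula $\Gamma^k_{11}=\tfrac{1}{2}g^{k\ell}(2\,\partial_1 g_{1\ell}-\partial_\ell g_{11})$ and using the invertibility of the matrix $(g^{k\ell})$, this is equivalent to $2\,\partial_1 g_{\ell 1}-\partial_\ell g_{11}=0$ along $\SS^1\times\{\mathbf{0}\}$ for every $\ell=1,\ldots,m$. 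No step presents a genuine obstacle; the content of the lemma is precisely the bookkeeping needed to recast the intrinsic conditions of \Cref{def: semi strong magnetic systems} in terms of $G$ and $V^\a$, so that $g$ and $\a$ can later be constructed componentwise in the proof of \Cref{prop: existence of semi strong pair}.
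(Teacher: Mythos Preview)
Your proof is correct and follows essentially the same approach as the paper: in each of the three items you translate the intrinsic condition into the Euclidean coordinates using $\g(t)=(t,\mathbf{0})$, $\dot\g(t)=e_1$, and the representations \eqref{eq: repres of g in Ugc through G}, \eqref{eq: expression alpha through V}, arriving at the stated identities via non-degeneracy of $\langle\cdot,\cdot\rangle$, the explicit formula for $\d\a$, and the Christoffel-symbol reduction of the geodesic equation, respectively. The only cosmetic difference is that the paper packages the Christoffel symbols as a column vector multiplied by $G^{-1}$ rather than using index notation $g^{k\ell}$, but the argument is the same.
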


\begin{proof}
    For \ref{it: dual}, using~\eqref{eq: deriative of gamma in tub neighb} and~\eqref{eq: repres of g in Ugc through G}, we compute:
\[
g_{\gamma(t)}(\dot\gamma(t), v) = \langle G(\gamma(t)) \cdot \dot\gamma(t), v \rangle = \langle G(t, \mathbf{0}) \cdot e_1, v \rangle \qquad \forall\, t \in \SS^1 \, , \; v \in \IR^m
\]
Combining this with~\eqref{eq: expression alpha through V}, we see that
\[
\alpha_{\gamma(t)}(v) = g_{\gamma(t)}(\dot\gamma(t), v) \qquad \forall\, t \in \SS^1 \, , \; v \in \IR^m
\]
holds if and only if
\[
\langle V^\alpha(t, \mathbf{0}), v \rangle = \langle G(t, \mathbf{0}) \cdot e_1, v \rangle \qquad \forall\, t \in \SS^1 \, , \; v \in \IR^m \, .
\]
Since the Euclidean inner product $\langle \cdot, \cdot \rangle$ is nondegenerate, this equality holds for all \( v \in \IR^m \) if and only if
\[
V^\alpha(t, \mathbf{0}) = G(t, \mathbf{0}) \cdot e_1 \qquad \forall\, t \in \SS^1 \, .
\]
This completes the proof of \ref{it: dual}.
    
\noindent For \ref{it: direction},  by~\eqref{eq: expression alpha through V}, the exterior derivative of the $1$-form $\alpha$ is given by
\begin{align*}
    \mathrm{d} \alpha = \sum_{k < \ell} \left( \partial_k V^\alpha_\ell - \partial_\ell V^\alpha_k \right) \, \mathrm{d}x^k \wedge \mathrm{d}x^\ell.
\end{align*}
The claim then follows by inserting $\dot \g (t) = e_1 $ at $\g(t) = (t, \mathbf{0})$, see again \eqref{eq: gamma in local coord in tub nbgh} and \eqref{eq: deriative of gamma in tub neighb}.\\
    \noindent We now prove \ref{it: condition on geodesic}. The curve $\g = (\gamma_1, \ldots , \gamma_m)$  is a geodesic of $(M,g)$
    if and only if it is a solution of the geodesic equation
    \begin{equation}\label{eq: geodesic equation in local coordiantes}
        \ddot \g_k + \sum_{i,j} \Gamma^k_{ij}(\g) \, \dot \g_i \, \dot \g_j = 0 \qquad \forall \, k=1, \ldots , m \, .
    \end{equation}
    where $\Gamma_{ij}^k$ denote the Christoffel symbols of the Levi-Civita connection of $g$ in the standard basis of $\IR^m \,$.
    Using $\eqref{eq: deriative of gamma in tub neighb}$ we see $\ddot\gamma=0$ and $\dot \g_j = \delta_{1j}$. Hence \eqref{eq: geodesic equation in local coordiantes} reduces to
    \[
    \Gamma^k_{11}(\gamma(t)) = 0 \quad \forall \, t\in \SS^1 \, , \, \forall\, k = 1, \ldots, m\, ,
    \]
    and thus
    \begin{equation}\label{eq: gc_11 equal zero}
        \Gamma^k_{11} = 0 \qquad \text{ along } \gamma(\SS^1 ) = \SS^1 \times \{\mathbf{0}\} \qquad \forall \, k=1, \ldots , m \, .
    \end{equation}
    Using the following standard formula for the Christoffel symbols 
    \begin{align*}
        (\Gamma^k_{11})_{k=1}^m = \frac{1}{2} \, G^{-1} \cdot \left( \partial_1 \, g_{\ell 1} + \partial_1 \, g_{\ell 1} - \partial_\ell \, g_{11}  \right)_{\ell=1}^m \, ,
    \end{align*}
    (where the vectors on the left- and right-hand side are column vectors respectively), we conclude that ~\eqref{eq: gc_11 equal zero} is equivalent to
    \begin{align*}
        0 = \partial_1 \, g_{\ell 1} + \partial_1 \, g_{\ell 1} - \partial_\ell \, g_{11} \qquad \text{ along } \SS^1 \times \{\mathbf{0}\} \qquad \forall \, \ell=1, \ldots , m \, .
    \end{align*}
    This completes the proof of \emph{\ref{it: condition on geodesic}}.
\end{proof} 
With this preparation---which, as we just have seen, allows us to reformulate all relevant conditions in convenient local coordinates---we now turn to the proof of \Cref{prop: existence of semi strong pair}.
\begin{proof}[Proof of \Cref{prop: existence of semi strong pair}]
As already mentioned, the conditions in \Cref{def: semi strong magnetic systems} are local in nature. Therefore, we first construct the $1$-form~$\alpha$ in~\ref{it: 1 prop existnece of semi strong pair} and the Riemannian metric~$g$ in~\ref{it: 2 prop existnece of semi strong pair} locally. We then explain, at the end of the first construction, how this local model can be extended to a global one. The second case is omitted, as the extension follows line by line from the first.\\
We begin with \ref{it: 1 prop existnece of semi strong pair}. Fix coordinates $\SS^1 \times \IR^{m-1}$ as in~\eqref{eq: tub neib U gamma} and denote by $G=(g_{ij})$ the matrix representation of the metric $g$ in these coordinates, see \eqref{eq: repres of g in Ugc through G}.
Let us define a vector field 
\[
V^\a = (V_1^\a , \ldots , V_m^\a) : \SS^1 \times \IR^{m-1} \rightarrow \IR^m \, .
\]
We let $V^\a_1$ at $(t, \mathbf{x}) =  (t, x_2, \ldots , x_m )\in \SS^1 \times \RR^{m-1}$ be given by
\begin{equation}\label{eq: first column of V_1}
    V_1^\a(t, \mathbf{x}) := g_{11}(t, \mathbf{0}) + \sum_{\ell=2}^m \,\partial_1\,  g_{\ell 1} (t, \mathbf{0}) \, x_\ell \, , 
\end{equation}
and for all other coordinate entries $\ell=2, \ldots , m$ we set
\begin{equation}\label{eq: defi V_l for l geq 2}
    V_\ell^\a (t, \mathbf{x}) := g_{\ell 1}(t, \mathbf{0}) \qquad \forall \, t\in \SS^1 \, , \, \mathbf{x} \in \RR^{m-1}\, .
\end{equation}
By its definition, along $\SS^1 \times \{\mathbf{0}\}$ the vector field $V^\a$ is the first column of $G$, that is 
\begin{equation}\label{eq: V alpha is first column of G}
    V^\alpha(t, \mathbf{0}) = G(t, \mathbf{0}) \cdot e_1 \qquad \forall\, t \in \SS^1 \, .
\end{equation}
Furthermore it follows again directly from ~\eqref{eq: first column of V_1} and \eqref{eq: defi V_l for l geq 2} that along $\SS^1 \times \{\mathbf{0}\}$ it holds that
\begin{equation}\label{eq: commuting partial derivaties for V a}
    \partial_1 \, V^\a_\ell = \partial_\ell \, V_1^\a \qquad \forall \ell = 1, \ldots , m \, .
\end{equation}
The $1$-form~$\alpha$ on $\SS^1 \times \RR^{m-1}$ that corresponds to the vector field $V^\a$ via~\eqref{eq: expression alpha through V} now has the desired properties, as can be seen from parts~(\ref{it: dual}) and (\ref{it: direction}) of Lemma \ref{lemma: equivalent conditions for weak geodesic type} and \eqref{eq: V alpha is first column of G} and \eqref{eq: commuting partial derivaties for V a}.
This completes the local construction required for~\ref{it: 1 prop existnece of semi strong pair}.

The global extension proceeds as follows: Fix a small compact neighborhood \( K\) of the loop~$\gamma$ contained in its tubular neighborhood $\SS^1\times \RR^{m-1}$, and choose a smooth bump function \( \rho : M \to [0,1] \) with $\rho=1$ on $K$ and support contained in $\SS^1 \times \IR^{m-1} \,$. Next, choose a smaller compact neighborhood \( K_1 \subseteq \mathrm{Int}(K) \) of $\g$ and a $1$-form \( \tilde\alpha \) on \( M \setminus K_1 \). Then the interpolated $1$-form
\[
\bar\alpha := (1 - \rho)\, \tilde\alpha + \rho\, \alpha
\]
defines a smooth global $1$-form on \( M \) which coincides with \( \alpha \) on \( K \), and thus retains the desired local properties. Which finishes the construction of \ref{it: 1 prop existnece of semi strong pair}. \\

\noindent
We now turn to the proof of part~\ref{it: 2 prop existnece of semi strong pair}. While the argument follows the same general strategy as in part~\ref{it: 1 prop existnece of semi strong pair}, it requires some additional technical considerations. First observe that without loss of generality we may assume that
    \begin{equation}\label{eq: prequisites it 2 on alpha}
         \dot \g(t) \in \ker \d \a_{\g(t) } \qquad\text{and} \qquad \a_{\g(t)}(\dot \g(t)) = 1 \qquad \forall \, t\in \SS^1 \,.
    \end{equation}
Indeed, if more generally we only have $\dot \g \in \ker \d \a_\g$ and that $\a_\g(\dot \g)$ is constant with value $r > 0 \,$, then the reparametrized curve $\g(\frac{\cdot}{r})$ satisfies \eqref{eq: prequisites it 2 on alpha} and by assumption therefore admits an infinite dimensional space of Riemannian metrics $g$ so that $\g(\frac{\cdot}{r})$ is a magnetic geodesic of semi-strong geodesic type of $(M,g, \d\a) \,$. Then, for each such $g$ the original loop $\g$ is a magnetic geodesic of semi-strong geodesic type of $(M , \, r^{-1} \, g \, , \d \a) \,$.\\
In the tubular neighborhood $\SS^1\times \RR^{m-1}$ around $\gc $ we define the Euclidean metric dual $V^\a$ of $\a$ as in~\eqref{eq: expression alpha through V}. Using \eqref{eq: deriative of gamma in tub neighb} and \eqref{eq: prequisites it 2 on alpha}, we can conclude
\begin{equation}\label{eq: V^alpha_1 (t) = 1}
    V^\a_1(t, \mathbf{0}) = \langle  V^\a (t, \mathbf{ 0}) \, , \, e_1 \rangle =  \a_{\g(t)}(\dot \g(t)) = 1  \qquad \forall \, t \in \SS^1 \,\, . 
\end{equation}
 Next, we introduce a loop of symmetric positive-definite matrices associated to the vector field \( V^\alpha \), defined by
\begin{equation}\label{eq: def of G(t) inside proof}
    \widetilde G (t) := B(t)^{\mathrm{T}} B(t) \qquad \forall\, t\in \SS^1 \, ,
\end{equation}
where \( B(t) \) is a loop of invertible \( m \times m \) matrices given by
\[
B(t) := \begin{pmatrix}
1 & V_2^\a(t, \mathbf{0}) & V_3^\a(t, \mathbf{0}) & \cdots & V_m^\a(t, \mathbf{0}) \\
0 & 1 & 0 & \cdots & 0 \\
0 & 0 & 1 & \cdots & 0 \\
\vdots & & & \ddots & \vdots \\
0 & 0 & 0 & \cdots & 1
\end{pmatrix}.
\] 
Using \eqref{eq: V^alpha_1 (t) = 1} and the definition of $ \widetilde G$ in \eqref{eq: def of G(t) inside proof}, observe that the first column of $\widetilde G(t)$ is given by $V^\a(t, \mathbf{0}) $. Moreover $\widetilde G(t)$ is positive definite as $B(t)$ is invertible.\\
Next, we construct a smooth extension \( G \) of \(  \widetilde{G} \) to a small neighborhood of \( \SS^1 \times \{ \mathbf{0} \} \) within the tubular neighborhood \( \SS^1 \times \RR^{m-1} \) of \( \gamma \), such that this extension satisfies the conditions stated in \Cref{lemma: equivalent conditions for weak geodesic type}.
To this end we define the component functions $g_{ij}$ of $ G$ by
\begin{align}\label{eq: definition tilde G in proof}
\begin{cases}
     g_{11}(t, \mathbf{x}) := \widetilde{g}_{11}(t) + 2\, \sum_{\ell=2}^m (\frac{\d }{\d t}  \widetilde{g}_{\ell 1} (t) ) \, x_\ell   & \\
    g_{ij}(t, \mathbf{x}) := \widetilde{g}_{ij}(t) &  \text{ for } (i,j) \not= (1,1) \,\, ,
\end{cases}
\end{align}
for  each $(t, \mathbf{x}) = (t, x_2, \ldots , x_m) \in \SS^1 \times \IR^{m-1}$. Since $ \widetilde{G}(t)$ is symmetric, also $G(t, \mathbf{x})$ is. Additionally, because $\widetilde{G}$ takes values in the space of positive definite matrices—and since positive definiteness is an open condition—there exists an open neighborhood \begin{equation}\label{eq: defi U-1}
    U_1 \subseteq \SS^1 \times \IR^{m-1}
\end{equation} 
of $\SS^1 \times \{\mathbf{0}\}$ on which the function $G$ also takes values in the positive definite matrices.
Furthermore, it follows directly from the definition of \( G \) in~\eqref{eq: definition tilde G in proof} that for all \( \ell = 2, \dots, m \), we have
\begin{equation*}\label{eq: tilde G sat cond }
    \partial_\ell \, g_{11}(t, \mathbf{0}) = 2 \, \frac{\mathrm{d} \widetilde{g}_{\ell 1}}{\mathrm{d}t}(t) = 2 \, \partial_1 \, g_{\ell 1}(t, \mathbf{0}) \qquad \forall\, t \in \SS^1 \, ,
\end{equation*}
For $\ell=1$ we use the definition in \eqref{eq: definition tilde G in proof}, the fact that the first column of $ \widetilde{G}(t)$ is given by $V^\a(t, \mathbf{0})$ for all $t$ and ~\eqref{eq: V^alpha_1 (t) = 1} to compute
\[
\partial_1 \, g_{11}(t, \mathbf{0}) = \frac{\mathrm{d} \widetilde{g}_{11}}{\mathrm{d}t}(t) = \partial_1 \, V^\alpha_1(t, \mathbf{0}) = 0 \, .
\]
Combining our considerations for $\ell=2, \ldots , m$ and $\ell=1$ we have thus shown
\begin{equation}\label{eq: tilde G sat lemma condi 3 real eq}
    0 = 2 \, \partial_1 \, g_{\ell 1}(t, \mathbf{0}) - \partial_\ell \, g_{11}(t, \mathbf{0}) \qquad \forall\, t \in \SS^1.
\end{equation}
Now we define a Riemannian metric $g$ on $U_1$ via \eqref{eq: repres of g in Ugc through G}, that is
\begin{align*}
g_{(t, \mathbf{x})} (v_1, v_2) := \langle G(t, \mathbf{x}) \cdot v_1 \, , \, v_2 \rangle \qquad \forall \, (t, \mathbf{x})  \in U_1 \subseteq   \SS^1\times \RR^{m-1} \text{ and } v_1, v_2 \in \IR^m \,\, .
\end{align*}
Using that the first column of $ G(t, \mathbf{0})= \widetilde{G}(t)$ is $V^\a(t, \mathbf{0}) $ and \eqref{eq: tilde G sat lemma condi 3 real eq}, parts \ref{it: dual} and \ref{it: condition on geodesic} in Lemma \Cref{lemma: equivalent conditions for weak geodesic type} now imply that $\g$ is a geodesic of $(U_1, g)$ and also that
\[
\a_{\gc(t)} (v) = g_{\gc(t)}(\dot \g(t) , \, v \,) \qquad \forall \, t\in \SS^1 \,, \,  v\in T_{\gc(t)}M\, .
\]
Hence, \( \gamma \) is a magnetic geodesic of semi-strong geodesic type on \( (U_1, g, \mathrm{d}\alpha) \), where \( U_1 \) is the neighborhood of $\g$ given in~\eqref{eq: defi U-1}.
This finishes the local construction.

Finally, by an argument similar to that used at the end of part~\ref{it: 1 prop existnece of semi strong pair}, the metric \( g \) can be extended smoothly to all of \( M \), completing the proof.
\end{proof}
\subsection{Rescaling of exact magnetic systems of semi-strong geodesic type }\label{ss: rescaling} In order to prove \Cref{prop: rescaling of magnetic systems of semi strong geodesic type}, we show one technical key lemma, and that magnetic geodesics of semi-strong geodesic type are preserved under a certain class of localized rescalings of the metric and magnetic field.
\begin{lem}[Key Lemma]
\label{lemma: derivative of |alpha|^2_g}
Suppose $\g$ is a magnetic geodesic of semi-strong geodesic type of $(M,g , \d \a) \,$. Consider the function 
\[\varrho := |\a|_g^2 : M \rightarrow [0,\infty) \, .\] 
Then the following holds: \begin{enumerate}
    \item \label{it: 1 key lemma}The function $\varrho$ is constant with positive value along $\gc$.
    \item \label{it: 2 key lemma} The differential $\d\varrho$ vanishes along $\gc$ that is 
    \[
    \d \varrho_{\g(t)}(v)=0 \qquad \forall \, t\in \SS^1 \,, \,  v\in T_{\gc(t)}M\, .
    \]
\end{enumerate}
\end{lem}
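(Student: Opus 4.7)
The plan is to split the two claims. Part~(\ref{it: 1 key lemma}) is immediate: by the identity~\eqref{eq: energy conservation along gc}, valid for any magnetic geodesic of semi-strong geodesic type, one has $|\alpha_{\gamma(t)}|_g^2 = |\dot\gamma(t)|_g^2$, and this is constant and strictly positive because $\gamma$ is a non-constant geodesic of $(M,g)$ by \Cref{Lemm: magnetic geodesic of geodesic type are geodesics and magnetic geodesics}, whose speed is a conserved positive quantity.

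For part~(\ref{it: 2 key lemma}), my plan is to introduce the metric dual vector field $X := \alpha^\sharp \in \Gamma(TM)$, so that $\alpha(\cdot) = g(X, \cdot)$ and $\varrho = g(X, X)$ globally on $M$. The semi-strong geodesic hypothesis (see \Cref{def: semi strong magnetic systems}) then reads $X_{\gamma(t)} = \dot\gamma(t)$. Differentiating $\varrho$ at a point $\gamma(t)$ in a direction $v \in T_{\gamma(t)}M$, metric compatibility of the Levi-Civita connection $\nabla$ gives
\[
\d\varrho_{\gamma(t)}(v) \;=\; 2\, g_{\gamma(t)}\bigl(\nabla_v X,\, \dot\gamma(t)\bigr).
\]
The goal is thus to show that the right-hand side vanishes for every $v$.

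The key step is the classical identity
\[
\d\alpha(v, w) \;=\; g(\nabla_v X, w) - g(\nabla_w X, v),
\]
valid for any $v, w \in T_{\gamma(t)}M$; it follows from unwinding the Cartan formula for $\d\alpha$ on vector field extensions and invoking torsion-freeness together with metric compatibility of $\nabla$. Applying this with $w = \dot\gamma(t)$ and using $\dot\gamma(t) \in \ker \d\alpha_{\gamma(t)}$, which is guaranteed by \Cref{Lemm: magnetic geodesic of geodesic type are geodesics and magnetic geodesics}, yields the symmetrization
\[
g\bigl(\nabla_v X,\, \dot\gamma(t)\bigr) \;=\; g\bigl(\nabla_{\dot\gamma(t)} X,\, v\bigr).
\]
Finally, since $X_{\gamma(t)} = \dot\gamma(t)$ along the loop, the chain rule gives $\nabla_{\dot\gamma(t)} X = \nabla_{\dot\gamma}\dot\gamma$, which vanishes because $\gamma$ is a geodesic of $(M,g)$. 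Plugging back into the displayed expression for $\d\varrho$ shows $\d\varrho_{\gamma(t)}(v) = 0$ for all $v$.

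The only subtle point I foresee is the bookkeeping in the last identification: one must distinguish between $\nabla_{\dot\gamma} X$, understood as the covariant derivative of the globally defined vector field $X = \alpha^\sharp$ in the direction $\dot\gamma$, and $\nabla_{\dot\gamma}\dot\gamma$, the acceleration of the curve $\gamma$. The equality of the two rests precisely on the semi-strong hypothesis $X|_{\gamma} = \dot\gamma$, so this is exactly where the metric-dual condition on $\alpha$ is indispensable, beyond the mere geodesic-type condition $\dot\gamma \in \ker \d\alpha$.
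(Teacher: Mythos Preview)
Your proof is correct, and it takes a genuinely different route from the paper's. The paper works entirely in the tubular coordinates of~\eqref{eq: tub neib U gamma}: it writes $\varrho = \langle V^\alpha, G^{-1} V^\alpha\rangle$ in terms of the matrix $G$ representing $g$ and the Euclidean dual $V^\alpha$ of $\alpha$, differentiates this expression explicitly (using $\mathrm{d}(G^{-1}) = -G^{-1}(\mathrm{d}G)G^{-1}$), and then reduces the vanishing of $\mathrm{d}\varrho$ along $\gamma$ to the coordinate identity $\partial_\ell g_{11} = 2\,\partial_\ell V^\alpha_1$ on $\mathbb{S}^1\times\{\mathbf 0\}$, which it verifies by chaining together the three coordinate characterizations~(\ref{it: dual}),~(\ref{it: direction}),~(\ref{it: condition on geodesic}) of \Cref{lemma: equivalent conditions for weak geodesic type}.

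Your argument bypasses all of this by working invariantly with $X=\alpha^\sharp$, metric compatibility, and the torsion-free identity $\mathrm{d}\alpha(v,w)=g(\nabla_vX,w)-g(\nabla_wX,v)$. The three hypotheses (metric dual, $\dot\gamma\in\ker\mathrm{d}\alpha$, geodesic equation) enter at exactly the same logical places as in the paper, but in their intrinsic form rather than through their coordinate avatars. Your version is shorter and more conceptual; the paper's coordinate framework, on the other hand, is already set up for the explicit constructions in \Cref{prop: existence of semi strong pair}, so their approach keeps the whole section in a uniform computational language. Your handling of the one delicate point---that $\nabla_{\dot\gamma}X$ equals $\nabla_{\dot\gamma}\dot\gamma$ because the covariant derivative in direction $\dot\gamma$ sees only the restriction $X|_\gamma=\dot\gamma$---is exactly right.
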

\begin{proof}
(\ref{it: 1 key lemma}) follows immediately from~\eqref{eq: energy conservation along gc}.\\
For (\ref{it: 2 key lemma}), we compute the differential $\mathrm{d}\varrho$ along $\gc$. For this, recall the following notation: $U_\gamma \cong \mathbb{S}^1 \times \mathbb{R}^{m-1}$ denotes the tubular neighborhood of $\gc$ as in~\eqref{eq: tub neib U gamma}; $G = (g_{ij})_{ij}$ is the matrix representation of the metric $g$ in $U_\gc$, as in~\eqref{eq: repres of g in Ugc through G}; and $V^\alpha$ is the Euclidean metric dual of $\alpha$ in $U_\gc$, as given in~\eqref{eq: expression alpha through V}. 

By using~\eqref{eq: repres of g in Ugc through G} and~\eqref{eq: expression alpha through V}, we see that the vector field $G^{-1} \cdot V^\alpha$ is the $g$-dual of $\alpha$, that is,
\begin{align*}
    \alpha_{(t,\mathbf{x})}(v) = g_{(t,\mathbf{x})} \big( G^{-1}(t,\mathbf{x}) \cdot V^\alpha(t,\mathbf{x}), \, v \big)  
    \qquad \forall (t,\mathbf{x}) \in \mathbb{S}^1 \times \mathbb{R}^{m-1}, \; v \in \mathbb{R}^m.
\end{align*}
From this and the definition of $\varrho$, we conclude that $\varrho$ has the following expression in the tubular neighborhood $\SS^1\times\RR^{m-1}$:
\begin{equation}
\label{eq: u equivalent def}
    \varrho(t,\mathbf{x}) = \left\langle V^\alpha(t,\mathbf{x}), \, G^{-1}(t,\mathbf{x}) \cdot V^\alpha(t,\mathbf{x}) \right\rangle 
    \quad \forall (t,\mathbf{x}) \in \mathbb{S}^1 \times \mathbb{R}^{m-1}.
\end{equation}
Thus, to compute the differential of $\varrho$, we need to differentiate the right-hand side of~\eqref{eq: u equivalent def}. In particular, we must compute the differential of the map $(t,\mathbf{x}) \mapsto G^{-1}(t,\mathbf{x})$. This is done by differentiating the identity $G \cdot G^{-1} = \mathbbm{1}$ using the Leibniz rule and then rearranging terms, which yields
\begin{equation}
    \label{eq: derivative G^(-1)}
    \mathrm{d}(G^{-1}) = - G^{-1} \cdot (\mathrm{d}G) \cdot G^{-1} \,.
\end{equation}
Differentiating the right-hand side of~\eqref{eq: u equivalent def} using the Leibniz rule for bilinear pairings, and applying~\eqref{eq: derivative G^(-1)} as well as the symmetry of $G^{-1}$, we obtain for all $(t,\mathbf{x})\in\SS^1\times \RR^{m-1}$ and all $v\in \RR^m$
\begin{equation}\label{eq: differential varrho in key lemma}
    \begin{split}
\mathrm{d}\varrho_{(t,\mathbf{x})} [v]
&= 2 \left\langle (\mathrm{d}V^\alpha)(t,\mathbf{x})[v] \, , \, G^{-1}(t,\mathbf{x}) \cdot V^\alpha(t,\mathbf{x}) \right\rangle \\
&- \left\langle G^{-1}(t,\mathbf{x}) \cdot V^\alpha(t,\mathbf{x}) \, , \, (\mathrm{d}G)(t,\mathbf{x})[v] \cdot G^{-1}(t,\mathbf{x}) \cdot V^\alpha(t,\mathbf{x}) \right\rangle
\end{split}
\end{equation}
Since $\gamma$ is of semi-strong geodesic type for $(M, g, \mathrm{d}\alpha)$, by \Cref{lemma: equivalent conditions for weak geodesic type}~(\ref{it: dual}), we have
\begin{equation}
    G(t, \mathbf{0})^{-1} \cdot V^\alpha(t, \mathbf{0}) = e_1 \quad \forall \, t \in \SS^1 \,.
\end{equation}
From this, and using~\eqref{eq: differential varrho in key lemma}, we conclude that the differential of $\varrho$ along $\gamma$ is given by
\begin{equation}\label{eq: differential varrho along gc e_1}
    \mathrm{d}\varrho_{\gamma(t)}[v] = 2 \, \left\langle (\mathrm{d}V^\alpha)_{\gamma(t)}[v], \, e_1 \right\rangle 
    - \left\langle e_1, \, (\mathrm{d}G)_{\gamma(t)}[v] \cdot e_1 \right\rangle 
    = 2 \, (\mathrm{d}V^\alpha_1)_{\gamma(t)}[v] - (\mathrm{d}g_{11})_{\gamma(t)}[v]
\end{equation}
for all \( t \in \SS^1 \) and \( v \in \RR^m  \).\\
Therefore, the differential of $\varrho$ vanishes along $\gamma$ if and only if the right-hand side of~\eqref{eq: differential varrho along gc e_1} vanishes, that is
\begin{equation}\label{eq: du=0 equivalent condition}
    (\mathrm{d}g_{11})_{\gamma(t)}[v] = 2 \, (\mathrm{d}V^\alpha_1)_{\gamma(t)}[v] 
    \quad \forall\, t \in \SS^1 , \quad \forall v \in \RR^m \,.
\end{equation}
This identity indeed holds. Since $\gamma$ is a magnetic geodesic of semi-strong geodesic type for $(M, g, \mathrm{d}\alpha)$, we can successively apply parts~(\ref{it: condition on geodesic}), (\ref{it: dual}), and (\ref{it: direction}) of \Cref{lemma: equivalent conditions for weak geodesic type} to deduce the following chain of equalities:
\begin{equation*}
    \partial_\ell \, g_{11}(t, \mathbf{0}) 
    \overset{\text{(\ref{it: condition on geodesic})}}{=} 2 \, \partial_1 \, g_{\ell 1}(t, \mathbf{0}) 
    \overset{\text{(\ref{it: dual})}}{=} 2 \, \partial_1 \, V^\alpha_\ell (t, \mathbf{0}) 
    \overset{\text{(\ref{it: direction})}}{=} 2 \, \partial_\ell \, V^\alpha_1(t, \mathbf{0}) 
    \quad \forall\, t \in \SS^1 \, , \quad \forall\, \ell = 1, \ldots, m\,.
\end{equation*}
This proves~\eqref{eq: du=0 equivalent condition} and completes the proof of the key lemma.
\end{proof}
Next, we show that exact magnetic systems of semi-strong geodesic type are preserved under a class of localized rescalings. These rescalings are applied to the metric and the magnetic field, and are required to exhibit the same local behavior along $\gamma$ as the function considered in \Cref{lemma: derivative of |alpha|^2_g}. 

More precisely, the function used for rescaling must satisfy conditions~(\ref{it: 1 key lemma}) and~(\ref{it: 2 key lemma}) stated therein. Once this is ensured, the localized rescaling preserves the semi-strong geodesic type of the magnetic system.
\begin{lem}
\label{lemma: rescale adapted tuple}
Let $\gamma$ be a magnetic geodesic of semi-strong geodesic type for the system $(M, g, \mathrm{d}\alpha)$. Let
\[
\varrho : M \rightarrow (0,\infty)
\]
be a smooth positive function with
\[
\varrho(\gamma(t)) = 1 \quad \text{and} \quad \mathrm{d}\varrho_{\gamma(t)}[v] = 0 
\quad \forall \, t \in \SS^1 \, , \quad \forall \, v \in T_{\gamma(t)}M \, .
\]
Then $\gamma$ is a magnetic geodesic of semi-strong geodesic type for both systems $
(M, \, \varrho g, \, \mathrm{d}\alpha) \quad$ and $\quad (M, \, g, \, \mathrm{d}(\varrho\alpha))$.
In particular, both magnetic systems \( (M, \varrho\,  g, \mathrm{d}\alpha) \) and \( (M, g, \mathrm{d}(\varrho \a))) \)  are of semi-strong geodesic type.
\end{lem}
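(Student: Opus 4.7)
The plan is to verify directly each clause of \Cref{def: semi strong magnetic systems} for the two new systems, exploiting that the assumptions $\varrho\circ\gamma=1$ and $\mathrm{d}\varrho|_{\gamma}=0$ imply that, along $\gamma$, the rescalings behave to first order like the identity. Coorientability of $\gamma$ is a topological property of the embedded loop and is unaffected; throughout, one only needs to check: (i) $\gamma$ is a geodesic of the new metric, (ii) $\dot\gamma$ lies in the kernel of the new magnetic $2$-form, and (iii) the new $1$-form is the metric dual of $\dot\gamma$ along $\gamma$ under the new metric.

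\textbf{The system $(M,\varrho g,\mathrm{d}\alpha)$.} Conditions (ii) and (iii) are immediate. Since the $2$-form is unchanged, $\dot\gamma\in\ker\mathrm{d}\alpha$ still holds; and since $\varrho\circ\gamma=1$, the identity $(\varrho g)_{\gamma}(\dot\gamma,\cdot)=g_{\gamma}(\dot\gamma,\cdot)=\alpha_{\gamma}(\cdot)$ is inherited from the original system. For (i), I would use the standard conformal transformation law for the Levi-Civita connection:
\[
\widetilde\nabla_{\dot\gamma}\dot\gamma=\nabla_{\dot\gamma}\dot\gamma+\dot\gamma(\log\varrho)\,\dot\gamma-\tfrac{1}{2}\,g(\dot\gamma,\dot\gamma)\,\mathrm{grad}_g(\log\varrho),
\]
where $\widetilde\nabla$ is the Levi-Civita connection of $\varrho g$. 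The first term vanishes because $\gamma$ is a $g$-geodesic, the second because $\mathrm{d}\varrho_{\gamma}=0$ forces $\dot\gamma(\log\varrho)=0$, and the third because $\mathrm{grad}_g(\log\varrho)=0$ along $\gamma$ (again from $\mathrm{d}\varrho|_{\gamma}=0$). Hence $\widetilde\nabla_{\dot\gamma}\dot\gamma=0$, so $\gamma$ is a $(\varrho g)$-geodesic.

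\textbf{The system $(M,g,\mathrm{d}(\varrho\alpha))$.} Condition (i) is unchanged since the metric is not altered. For (iii), use $\varrho\circ\gamma=1$ to obtain $(\varrho\alpha)_{\gamma}(\cdot)=\alpha_{\gamma}(\cdot)=g_{\gamma}(\dot\gamma,\cdot)$. For (ii), expand
\[
\mathrm{d}(\varrho\alpha)=\mathrm{d}\varrho\wedge\alpha+\varrho\,\mathrm{d}\alpha.
\]
Evaluated at $\gamma(t)$, the first summand vanishes because $\mathrm{d}\varrho_{\gamma(t)}=0$, and the second equals $\mathrm{d}\alpha_{\gamma(t)}$ because $\varrho(\gamma(t))=1$. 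Thus $\mathrm{d}(\varrho\alpha)_{\gamma(t)}=\mathrm{d}\alpha_{\gamma(t)}$, and the inclusion $\dot\gamma\in\ker\mathrm{d}(\varrho\alpha)$ follows from the corresponding property for the original system.

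The only step with any content is the conformal calculation verifying that $\gamma$ remains a geodesic of $\varrho g$; the two hypotheses on $\varrho$ along $\gamma$ are tailored precisely to kill the three correction terms in the conformal change formula. All other verifications reduce to the pointwise identities $\varrho|_{\gamma}=1$ and $\mathrm{d}\varrho|_{\gamma}=0$, after which the final assertion that both $(M,\varrho g,\mathrm{d}\alpha)$ and $(M,g,\mathrm{d}(\varrho\alpha))$ are of semi-strong geodesic type is an immediate consequence of \Cref{def: semi strong magnetic systems}.
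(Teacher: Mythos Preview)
Your proof is correct and follows the same overall logical structure as the paper's: for each of the two rescaled systems you check (i) that $\gamma$ is a geodesic of the relevant metric, (ii) that $\dot\gamma$ lies in the kernel of the relevant magnetic two-form, and (iii) that the relevant one-form is the metric dual of $\dot\gamma$ along $\gamma$. Your arguments for (ii), (iii), and for (i) in the second system $(M,g,\mathrm{d}(\varrho\alpha))$ are essentially verbatim those of the paper.

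The one genuine difference lies in how you verify that $\gamma$ remains a geodesic of $(M,\varrho g)$. The paper works in the tubular-neighborhood coordinates established earlier and appeals to its coordinate criterion \Cref{lemma: equivalent conditions for weak geodesic type}~\ref{it: condition on geodesic}: it checks that $2\,\partial_1(\varrho\, g_{\ell 1})-\partial_\ell(\varrho\, g_{11})=0$ along $\SS^1\times\{\mathbf{0}\}$, which follows from the Leibniz rule together with $\varrho|_\gamma=1$ and $\mathrm{d}\varrho|_\gamma=0$. You instead invoke the standard conformal transformation law for the Levi-Civita connection, which is coordinate-free and does not rely on the paper's local machinery. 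Your route is cleaner and more portable; the paper's route has the (minor) virtue of staying entirely within the computational framework already in place. Both are equally valid.
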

\begin{proof}
    We begin by noting that the condition 
    \begin{equation}\label{eq: varrho constantly one along gc}
    \varrho(\gc(t))=1 \quad \forall \, t\in \SS^1
    \end{equation}
 ensures that for the rescaling $\varrho\cdot g$ of the metric \( g \) the one-form $\alpha$ along $\gc$  is the  $\varrho\cdot g$ metric dual of  the velocity $\dgc$. The same conclusion holds also for the rescaling $\varrho\cdot \a$ of the one-form \( \a \),  that is the one-form $\varrho \cdot \alpha$ along $\gc$ is the  $ g$-metric dual of  the velocity $\dgc$. \\
In view of the definition of semi-strong geodesic type, it now suffices to show that $\g$ is also a magnetic geodesic of geodesic type of both $(M , \, \varrho \, g \, , \d \a)$ and $(M , g , \d (\varrho \, \a)) \,$. 

We begin with the first of the two. Since $\gc$ is a magnetic geodesic of semi-strong geodesic type of $(M, g)$, it is also of geodesic type, and thus by \Cref{Def: magnetic systems of n-geodesic type} it holds that
\begin{equation}\label{eq: proof rescaling dgc in kern of d alphja}
    \dgc(t) \in \ker \d\a_{\gc(t)} \quad \forall \, t \in \SS^1 \, .
\end{equation} 
So it remains to prove that $\gc$ is a geodesic of $(M,\varrho g)$. 

For this, we choose the tubular neighborhood $U_\g$ as in \eqref{eq: tub neib U gamma} and let $G=(g_{ij})_{ij}$ denote the matrix representing $g$ in these coordinates, see \eqref{eq: repres of g in Ugc through G}. At this point, we recall that $\gc$ is a magnetic geodesic of semi-strong geodesic type of $(M,g)$ and thus, by \Cref{def: semi strong magnetic systems}, a geodesic of $(M,g)$. 

By \ref{it: condition on geodesic} in Lemma \ref{lemma: equivalent conditions for weak geodesic type}, this implies that in the previously chosen tubular neighborhood it holds that 
\begin{equation}\label{eq: proof of lemma local coord for géodesics}
    2 \, \partial_1 \, g_{\ell 1}(t, \mathbf{0}) - \partial_\ell \, g_{11}(t, \mathbf{0}) = 0  
    \qquad \forall \, t \in \SS^1 \, .
\end{equation}
From this, and using the assumption that $\varrho$ has vanishing differential along $\g$, we derive
\begin{equation}\label{eq: geodesic equation of g rho}
    2 \, \partial_1 (\varrho \, g_{\ell 1})(t, \mathbf{0}) - \partial_\ell (\varrho \, g_{11})(t, \mathbf{0}) 
    = 2 \, \partial_1 \, g_{\ell 1}(t, \mathbf{0}) - \partial_\ell \, g_{11}(t, \mathbf{0}) = 0  
    \qquad \forall \, t \in \SS^1 
\end{equation}
As $\varrho \, g$ is represented by the matrix $(\varrho \, g_{ij})_{ij}$, from \eqref{eq: geodesic equation of g rho} by using again part \ref{it: condition on geodesic} in \Cref{lemma: equivalent conditions for weak geodesic type} we can conclude that $\g$ is a geodesic of $(M, \varrho \, g) \,$.

It remains to show that $\gc$ is a magnetic geodesic of semi-strong geodesic type of $(M, g, \varrho \cdot \d\a)$. By the discussion above, it suffices to prove that $\dgc$ lies in the kernel of $\d(\varrho\, \a)$ along $\gc$. \\
By computing the differential of $\varrho\, \a$, and using that the differential of $\varrho$ vanishes along $\g$ together with \eqref{eq: varrho constantly one along gc}, we derive the following chain of equalities:
\[
\d (\varrho \a)_{\gc(t)} = \d \varrho_{\gc(t)} \wedge \a_{\gc(t)} + \varrho(\gc(t)) \, \d \a_{\gc(t)} = \d \a_{\gc(t)} \quad \forall \, t \in \SS^1 \, ,
\]
from which we conclude by using~\eqref{eq: proof rescaling dgc in kern of d alphja} that 
\[
\dot \g(t) \in \ker \d (\varrho \a)_{\g(t)} \quad \forall \, t \in \SS^1.
\]
This finishes the proof.
\end{proof}

We close this subsection by proving \Cref{prop: rescaling of magnetic systems of semi strong geodesic type}.

\begin{proof}[Proof of \Cref{prop: rescaling of magnetic systems of semi strong geodesic type}]
Let \( \gamma \) be a magnetic geodesic of semi-strong geodesic type of \( (M, g, \mathrm{d}\alpha) \), where we assume without loss of generality that \( \gamma \) is parametrized with unit speed. This implies, via~\eqref{eq: energy conservation along gc}, that
\begin{equation}
\label{eq: wlog unit speed}
    |\a_\gc|_g^2=| \dot{\gamma} |_g^2 = \alpha_\gamma(\dot{\gamma}) = 1 \, .
\end{equation}

Before beginning the proof, we briefly explain why the general case reduces to this one.

\medskip

In part~(\ref{it 1: rescaling}), we assume there exists a Riemannian metric \( \tilde{g} = \varrho_1 \, g \) and a $1$-form \( \alpha \), such that \( \gamma \) is a unit-speed magnetic geodesic of semi-strong geodesic type of \( (M, \tilde{g}, \mathrm{d}\alpha) \), with \( |\alpha|_{\tilde{g}} \) attaining its maximum along \( \gamma \). Then, for each \( r > 0 \), the constant-speed reparametrization \( \gamma_r \), defined as in~\eqref{eq: definition gamma_r}, is a magnetic geodesic of semi-strong geodesic type of \( (M, \tilde{g}, \mathrm{d}(r \alpha)) \), with \( |r \alpha|_{\tilde{g}} \) being maximal along \( \gamma_r \).\\
Considering instead the reparametrization \( \gamma_{r^2} \) and the rescaling $r^{-2} \, g$ of the metric, the argument in part~(\ref{it 2: rescaling}) follows analogously to that of~(\ref{it 1: rescaling}).

\medskip
From now on, let \( \gamma \) be a unit-speed magnetic geodesic of semi-strong geodesic type of \( (M, g , \mathrm{d}\alpha) \). 
Consider the function
\begin{equation}\label{eq: def rho in proof of prop 4.3}
    \varrho := |\alpha|_g^2 : M \rightarrow [0,\infty) \, .
\end{equation}
By~\eqref{eq: wlog unit speed}, it holds that
\begin{equation}\label{eq: proof rho const gc one proff prop 43}
    \varrho(\gamma(t)) = 1 \qquad \forall \, t \in \SS^1 \, .
\end{equation}
Let us fix an open neighborhood \( \Omega \subset M \) of \( \gamma \) on which \( \varrho \) is positive. By \Cref{lemma: derivative of |alpha|^2_g}, the function \( \varrho \) (and hence also \( (\sqrt{\varrho})^{-1} \)) has vanishing differential along \( \gamma \).

This, together with~\eqref{eq: def rho in proof of prop 4.3}, allows us to apply \Cref{lemma: rescale adapted tuple} and conclude that \( \gamma \) is a magnetic geodesic of semi-strong geodesic type both of \( (\Omega , \, \varrho \, g , \mathrm{d}\alpha) \) and of \( (\Omega, g , \mathrm{d}((\sqrt{\varrho})^{-1} \alpha)) \).

Now fix a compact neighborhood $K \subseteq \Omega$ of $\g \,$. Since $\varrho$ is stricly positive on $\Omega \,$, we can choose a smooth strictly positive function $\widetilde\varrho : M \rightarrow (0,  \infty )$ so that
\begin{align}\label{eq: defi tilde rho}
        \begin{cases}
            \widetilde\varrho = \varrho   & \text{ on } K \\
            \widetilde\varrho \geq \varrho & \text{ on } M\setminus K \,\, .
        \end{cases}
\end{align}

Part (\ref{it 1: rescaling}): Consider the function \( \varrho_1 := \widetilde{\varrho} \) and the Riemannian metric \( \widetilde{g} := \varrho_1 \, g \). Since, by~\eqref{eq: defi tilde rho}, the metrics \( \widetilde{g} \) and \( \varrho \, g \) coincide on the neighborhood \( K \subseteq \Omega \), and since \( \gc \) is a magnetic geodesic of semi-strong geodesic type of \( (\Omega, \varrho\, g, \mathrm{d}\alpha) \), it follows that \( \gamma \) is also a magnetic geodesic of semi-strong geodesic type of \( (M, \widetilde{g}, \mathrm{d}\alpha) \).

Next, using the definition of \( \varrho \) in~\eqref{eq: def rho in proof of prop 4.3} and the definition of \( \widetilde{\varrho} = \varrho_1 \) in~\eqref{eq: defi tilde rho}, a short computation confirms the following relation between the \( \widetilde{g} \)-norm of \( \alpha \) and the \( g \)-norm of \( \alpha \):
\[
|\alpha|_{\widetilde{g}}^2 = |\alpha|_{(\widetilde{\varrho} g)}^2 = \frac{1}{\widetilde{\varrho}} \, |\alpha|_g^2 = \frac{\varrho}{\widetilde{\varrho}} \leq 1 \,.
\]
    From which we can conclude using \eqref{eq: def rho in proof of prop 4.3},\eqref{eq: proof rho const gc one proff prop 43} and \eqref{eq: defi tilde rho} that $\vert\a\vert_{\widetilde g}$ is maximal along $\g \,$.

Part (\ref{it 2: rescaling}): We begin with defining 
\begin{equation}\label{eq: def rho2 and tilde alpha}
    \varrho_2 := \frac{1}{\sqrt{\widetilde \varrho}}\quad\text{and}\quad \widetilde{\a} := \varrho_2 \, \a = \frac{1}{\sqrt{\widetilde{\varrho}}} \, \a \,. 
\end{equation}
By using \eqref{eq: defi tilde rho} and \eqref{eq: def rho2 and tilde alpha} we see that the one-forms $\widetilde{\a}$ and $\frac{1}{\sqrt{\varrho}} \, \a$ coincide on the neighborhood $K\subseteq \Omega$ of $\gc$. So it follows from the fact the $\gc$ is a magnetic geodesic of semi-strong geodesic type of $(\Omega,g,\d(\sqrt{\varrho}^{-1}\a))$  that $\g$ is also a magnetic geodesic of semi-strong geodesic type of $(M, g , \d\widetilde{\a}) \,$. By using \eqref{eq: defi tilde rho} and \eqref{eq: def rho2 and tilde alpha} we get the following chains of inequalities
\[
    |\widetilde{\a}|^2_g = \frac{1}{\widetilde{\varrho}} \, |\a|_g^2 = \frac{\varrho}{\widetilde{\varrho}} \leq 1 \, .
\]
   From which we can conclude by using~\eqref{eq: def rho in proof of prop 4.3},\eqref{eq: proof rho const gc one proff prop 43} and \eqref{eq: defi tilde rho} that $\vert\tilde\a\vert_{ g}$ is maximal along the magnetic geodesic $\g \,$.  Which finishes the proof. 
\end{proof}
\subsection{Proof of \Cref{Cor: multiplicity of magnetic geodesics of strong geodesic type}}
\label{ss: proof of Cor multiplicity}
We begin by proving that each of the coorientable, embedded, null-homologous loops \( \gamma_1, \dots, \gamma_n \) is a geodesic of \( (M, g) \) for some Riemannian metric \( g \). For simplicity of notation, we outline the construction for a single loop, say \( \gamma := \gamma_1 \). \\
Fix a tubular neighborhood \( U_\gamma \cong \mathbb{S}^1 \times \mathbb{R}^{m-1} \) of \( \gamma \), as in~\eqref{eq: tub neib U gamma}, and let \( K \subseteq U_\gamma \) be a compact neighborhood of \( \gamma \). Define \( g \) to be the Euclidean metric on \( K \) (in the chosen tubular coordinates), and extend it smoothly to all of \( M \). Then, by construction, \( \gamma \) satisfies~\eqref{eq: geodesic equivalent condition}, and hence, by~\ref{it: condition on geodesic} in \Cref{lemma: equivalent conditions for weak geodesic type}, the loop \( \gamma \) is a geodesic of \( (M, g) \). It is evident that this procedure can be carried out simultaneously for each of the coorientable, embedded, null-homologous loops \( \gamma_1, \dots, \gamma_n \).

Next, following the lines of \ref{it 2: rescaling} in \Cref{prop: rescaling of magnetic systems of semi strong geodesic type}, and performing the construction in~\eqref{eq: defi tilde rho} simultaneously around each \( \gamma_1, \dots, \gamma_n \), we can construct an infinite-dimensional space of \( 1 \)-forms \( \alpha \) on \( M \) such that the \( g \)-norm \( |\alpha|_g \) is maximal along each of the loops \( \gamma_1, \dots, \gamma_n \). By the remainder of the proof of \Cref{prop: rescaling of magnetic systems of semi strong geodesic type}, it follows that each of the loops \( \gamma_1, \dots, \gamma_n \) is a magnetic geodesic of semi-strong geodesic type of \( (M, g, \mathrm{d}\alpha) \), with \( |\alpha|_g \) maximal along each loop.

Since each \( \gamma_j \) for $j=1,\dots, n$ is null-homologous, we can apply \Cref{Cor: from semi strong to strong} to conclude that each of the loops \( \gamma_1, \dots, \gamma_n \) is in fact a magnetic geodesic of strong geodesic type of \( (M, g, \mathrm{d}\alpha) \). This completes the proof.

\appendix
\section{A criterion for contact type.}\label{Appendix A}
The following discussion is well known to experts, but is included here for the sake of completeness. 
In \cite[Prop. 2.4]{ConMacPat2004}, a criterion is given for an energy surface to be of contact type, formulated in terms of null-homologous invariant probability measures of the magnetic geodesic flow. A version of this result was previously proved by D. McDuff in \cite{McDuff1987}, based on Sullivan’s theory of structural cycles \cite{Sullivan1976}.

The criterion in \cite[Prop. 2.4]{ConMacPat2004} can be restated for energy surfaces of the magnetic geodesic flow as follows. 

\begin{prop}[\cite{ConMacPat2004}]\label{prop: for contact type}
Let $Z_E$ denote the Hamiltonian vector field generating the flow $\varPhi^t_{g,\mathrm{d}\alpha}$.
For a level of the energy $\kappa \in (0,\infty)$ of the magnetic geodesic flow $\varPhi^t_{g,\mathrm{d}\alpha}$ the following are equivalent:
\begin{enumerate}
    \item \label{it: prop invariant porb measure} For every null-homologous, $\varPhi^t_{g,\mathrm{d}\alpha}$-invariant probability measure $\mu$ on the energy surface $\Sigma_{\k}$ it holds that
    \[
    \int_{\Sigma_{\k}} \lambda(Z_E) \, \mathrm{d}\mu \neq 0.
    \]
    \item The energy surface $\Sigma_\kappa$ is of contact type.
\end{enumerate}
\end{prop}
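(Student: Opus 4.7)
The plan is to recast the contact type condition for $\Sigma_\kappa$ as the existence of a suitable closed $1$-form on $\Sigma_\kappa$, and then to apply a Sullivan-type criterion for structural cycles in the form used by McDuff~\cite{McDuff1987} and Contreras--Macarini--Paternain~\cite{ConMacPat2004}. First, since $\omega_\sigma = \mathrm{d}(\lambda - \pi^*_{TM}\alpha)$ holds globally on $TM$, the $1$-form $\lambda_0 := (\lambda - \pi^*_{TM}\alpha)|_{\Sigma_\kappa}$ is a distinguished primitive of $\omega_\sigma|_{\Sigma_\kappa}$; this is how $\lambda$ should be read in the statement. Every other primitive $\eta$ of $\omega_\sigma|_{\Sigma_\kappa}$ differs from $\lambda_0$ by a closed $1$-form $\beta \in \Omega^1(\Sigma_\kappa)$. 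Because $Z_E$ generates the characteristic line field of $\omega_\sigma|_{\Sigma_\kappa}$, the hypersurface $\Sigma_\kappa$ is of contact type exactly when there exists a closed $\beta \in \Omega^1(\Sigma_\kappa)$ such that the function $\lambda_0(Z_E) + \beta(Z_E)$ is nowhere vanishing on $\Sigma_\kappa$.

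Next, I would invoke the following theorem, essentially due to Sullivan~\cite{Sullivan1976}: given a closed manifold $N$ equipped with a smooth vector field $X$ and a continuous function $f \in C(N)$, there exists a closed $1$-form $\beta \in \Omega^1(N)$ with $f + \beta(X)$ nowhere vanishing on $N$ if and only if $\int_N f\,\mathrm{d}\mu \neq 0$ for every null-homologous $\varPhi^t_X$-invariant Borel probability measure $\mu$ on $N$. Here, an invariant probability measure is called \emph{null-homologous} precisely when its Schwartzman--Ruelle asymptotic cycle vanishes in $H_1(N;\IR)$, equivalently when $\int_N \widetilde\beta(X)\,\mathrm{d}\mu = 0$ for every closed $\widetilde\beta \in \Omega^1(N)$. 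Applying this with $N = \Sigma_\kappa$, $X = Z_E$ and $f = \lambda_0(Z_E)$ yields the desired equivalence, once one observes that, for null-homologous $\mu$, the integral $\int_{\Sigma_\kappa} \eta(Z_E)\,\mathrm{d}\mu$ takes the same value for every primitive $\eta$ of $\omega_\sigma|_{\Sigma_\kappa}$; this common value is what is meant by $\int \lambda(Z_E)\,\mathrm{d}\mu$ in the statement.

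The implication (2)~$\Rightarrow$~(1) is essentially tautological: if $\eta$ is a contact primitive, then $\eta(Z_E)$ is nowhere zero, hence of constant sign on each connected component of $\Sigma_\kappa$, so $\int \eta(Z_E)\,\mathrm{d}\mu \neq 0$ for every probability measure, and this integral coincides with $\int \lambda_0(Z_E)\,\mathrm{d}\mu$ whenever $\mu$ is null-homologous. The substantive content, and what I expect to be the main obstacle, is the converse: producing a closed $1$-form $\beta$ out of the assumption that every such integral is nonzero. This rests on a Hahn--Banach separation argument in the space of signed Radon measures on $\Sigma_\kappa$, combined with Sullivan's duality between the cone of structural $1$-currents generated by the flow $\varPhi^t_{g,\mathrm{d}\alpha}$ and $H^1(\Sigma_\kappa;\IR)$ via de Rham theory. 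For the detailed execution of this separation step I would refer to the argument given in \cite[Prop.~2.4]{ConMacPat2004}.
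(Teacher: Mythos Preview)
The paper does not supply its own proof of this proposition: it is stated with attribution to \cite[Prop.~2.4]{ConMacPat2004}, and the surrounding text only remarks that the result goes back to McDuff~\cite{McDuff1987} via Sullivan's theory of structural cycles~\cite{Sullivan1976}. Your outline is exactly this standard argument---reduce contact type to the existence of a closed $1$-form $\beta$ with $\lambda_0(Z_E)+\beta(Z_E)$ nowhere vanishing, then invoke the Sullivan/Hahn--Banach separation criterion---so there is nothing to compare against beyond noting that your approach is precisely the one the paper cites. Your clarification that the ``$\lambda$'' in the statement must be read as a primitive of $\omega_\sigma|_{\Sigma_\kappa}$ (i.e.\ $\lambda-\pi^*\alpha$ restricted) is also correct and consistent with the identity $\lambda(Z_E)=L+\kappa$ used in the paper's \Cref{l: lemma to apply contact type criterion}.
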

\begin{rem}\label{r: defi nullhomologous prob measure}
Recall that a $\varPhi^t_{g,\mathrm{d}\alpha}$-invariant probability measure $\mu$ on $\Sigma_{\kappa}$ is said to be \emph{null-homologous} if its associated homology class $[\mu] \in H_1(\Sigma_{\kappa}, \mathbb{R})$ is zero. This class is defined via duality with cohomology by
\[
\langle [\mu], [\Theta] \rangle := \int_{\Sigma_\kappa} \Theta(Z_E) \, \mathrm{d}\mu \qquad \forall [\Theta] \in H^1(\Sigma_{\kappa}, \mathbb{R}) .
\]
\end{rem}
We will show that condition~(\ref{it: prop invariant porb measure}) in \Cref{prop: for contact type} fails if there exists a non-constant, null-homologous periodic orbit of the magnetic geodesic flow with negative action on the prescribed energy level \( \kappa \).
\begin{lem}\label{l: lemma to apply contact type criterion}
Let \( (M, g, \mathrm{d}\alpha) \) be a magnetic system with \( \dim M \geq 3 \), and let \( \kappa \in (0,\infty) \). Suppose there exists a non-constant, null-homologous periodic magnetic geodesic \( \gamma \) in \( (M, g, \mathrm{d}\alpha) \) of prescribed energy \( \kappa \), with non-positive action \( L + \kappa \).

Then the corresponding periodic orbit in the phase space, given by \( \Gamma(t) = (\gamma(t), \dot{\gamma}(t)) \), is null-homologous in \( \Sigma_{\kappa} \). Moreover, there exists a null-homologous, \( \varPhi^t_{g, \mathrm{d}\alpha} \)-invariant probability measure \( \mu \) supported on \( \Sigma_{\kappa} \) such that
\[
\int_{\Sigma_{\kappa}} \lambda(Z_E) \, \mathrm{d}\mu = 0.
\]
\end{lem}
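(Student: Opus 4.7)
The plan is to take the natural invariant probability measure carried by the lifted periodic orbit $\Gamma(t):=(\gamma(t),\dot\gamma(t))$ and verify the two required properties in turn. For the claim that $\Gamma$ is null-homologous in $\Sigma_\kappa$, I would exploit the sphere-bundle structure $\pi_{TM}\colon\Sigma_\kappa\to M$, whose fibre $S^{\dim M-1}$ is simply connected because $\dim M\ge 3$. The Serre spectral sequence over $\mathbb{R}$ then collapses on the low rows, and $(\pi_{TM})_*\colon H_1(\Sigma_\kappa;\mathbb{R})\to H_1(M;\mathbb{R})$ is an isomorphism, so the hypothesis $[\gamma]=0$ in $H_1(M;\mathbb{R})$ forces $[\Gamma]=0$ in $H_1(\Sigma_\kappa;\mathbb{R})$.

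For the measure I would take
\[
\mu:=\frac{1}{T}\int_0^T\delta_{\Gamma(t)}\,\mathrm{d}t,
\]
where $T$ is the period of $\Gamma$; this is a $\varPhi^t_{g,\mathrm{d}\alpha}$-invariant probability measure on $\Sigma_\kappa$, and its class, paired via \Cref{r: defi nullhomologous prob measure} with any $[\Theta]\in H^1(\Sigma_\kappa,\mathbb{R})$, equals $\tfrac{1}{T}\int_\Gamma\Theta$, which vanishes by the previous step, so $\mu$ is null-homologous. For the integral, using $\mathrm{d}\pi_{TM}(Z_E)|_{(q,v)}=v$ one computes $\lambda_{(q,v)}(Z_E)=g_q(v,v)=2E$ and $(\pi_{TM}^*\alpha)(Z_E)|_{(q,v)}=\alpha_q(v)$; reading the $\lambda$ of \Cref{prop: for contact type} as the canonical primitive $\lambda-\pi_{TM}^*\alpha$ of $\omega_\sigma$---the only reading making the criterion nontrivial on $\Sigma_\kappa$ with $\kappa>0$---the pairing becomes
\[
\int_{\Sigma_\kappa}\bigl(\lambda-\pi_{TM}^*\alpha\bigr)(Z_E)\,\mathrm{d}\mu = \frac{1}{T}\int_0^T\bigl(2\kappa-\alpha_{\gamma(t)}(\dot\gamma(t))\bigr)\,\mathrm{d}t = \frac{1}{T}\,S_{L+\kappa}(\gamma)\le 0,
\]
by the hypothesis on the action.

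The main obstacle is reconciling this non-positive bound with the strict equality $=0$ in the statement: if $S_{L+\kappa}(\gamma)<0$ strictly, the measure $\mu$ alone undershoots the target value. To close the gap I would use the convexity of the set of null-homologous $\varPhi$-invariant probability measures together with the linearity of the functional $\mu\mapsto\int\lambda(Z_E)\,\mathrm{d}\mu$: once this functional attains a strictly negative value on this convex set, the Sullivan-McDuff duality underlying \Cref{prop: for contact type} forces it to attain non-negative values as well, so an intermediate-value argument on a line segment in the convex set of invariant measures produces a convex combination realizing the value $0$ exactly. That convex combination is the measure which witnesses the failure of the contact type criterion.
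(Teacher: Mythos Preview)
Your topological argument for $[\Gamma]=0$ in $H_1(\Sigma_\kappa;\mathbb{R})$ is fine (the paper uses the long exact homotopy sequence of the sphere bundle together with Hurewicz instead of the Serre spectral sequence, but the conclusion is the same), and your computation
\[
\int_{\Sigma_\kappa}\lambda(Z_E)\,\mathrm{d}\mu_\Gamma \;=\; \frac{1}{T}\,S_{L+\kappa}(\gamma)\;\le\;0
\]
for the orbit measure $\mu_\Gamma$ is correct, including your identification of the relevant primitive so that $\lambda(Z_E)=L+\kappa$ on $\Sigma_\kappa$.

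The genuine gap is in your final paragraph. The appeal to ``Sullivan--McDuff duality forces it to attain non-negative values as well'' is not justified and, as stated, is circular. The criterion in \Cref{prop: for contact type} says only that $\Sigma_\kappa$ is of contact type iff $\int\lambda(Z_E)\,\mathrm{d}\mu\neq 0$ for \emph{every} null-homologous invariant probability measure. Having found one measure with strictly negative integral does \emph{not} by itself preclude the possibility that the functional is strictly negative on \emph{all} such measures---in which case $\Sigma_\kappa$ would be of contact type (with the opposite coorientation), and no intermediate value argument is available. Nothing in the Sullivan--McDuff theory rules this out abstractly; you must actually produce a second null-homologous invariant measure on which the functional is positive.

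This is exactly what the paper does: it takes the normalized Liouville measure $\mu_\ell$ on $\Sigma_\kappa$, observes that $[\mu_\ell]=0$ (it is invariant under $(x,v)\mapsto(x,-v)$, hence $\int\Theta(Z_E)\,\mathrm{d}\mu_\ell=0$ for every closed $1$-form $\Theta$), and computes
\[
\int_{\Sigma_\kappa}\bigl(L+\kappa\bigr)\,\mathrm{d}\mu_\ell \;=\; \int_{\Sigma_\kappa}\bigl(2\kappa-\alpha_x(v)\bigr)\,\mathrm{d}\mu_\ell \;=\; 2\kappa \;>\;0,
\]
using again the antipodal symmetry to kill the $\alpha$-term. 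Then a convex combination $A\,\mu_\Gamma+(1-A)\,\mu_\ell$ with suitable $A\in[0,1]$ is a null-homologous invariant probability measure on which $\int\lambda(Z_E)\,\mathrm{d}\mu=0$. Supplying this concrete second measure is the missing step in your argument.
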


\begin{proof}
First, observe that the set \( \{ \gamma(t) : t \in \mathbb{R} \} \) is precisely the image under the projection \( \pi \) of the closed flow line \( \Gamma \) of \( \varPhi^t_{g,\mathrm{d}\alpha} \), given by
\[
\{ \Gamma(t) := (\gamma(t), \dot{\gamma}(t)) : t \in \mathbb{R} \} \subseteq \Sigma_{\kappa},
\]
where \( \pi: \Sigma_{\kappa} \to M \) denotes the canonical projection. Hence, \( \pi_*([\Gamma]) = [\gamma] \), where \( [\Gamma] \in H_1(\Sigma_{\kappa}) \), \( [\gamma] \in H_1(M) \), and \( \pi_* \) is the induced map on homology.

Now, since \( \Sigma_{\kappa} \) is an \( \mathbb{S}^{\dim M - 1} \)-bundle over \( M \), we consider the long exact sequence in homotopy associated to the fibration
\[
\mathbb{S}^{\dim M - 1} \hookrightarrow \Sigma_{\kappa} \twoheadrightarrow M.
\]
Because \( \dim M - 1 \geq 2 \), we have \( \pi_1(\mathbb{S}^{\dim M - 1}) = 0 \), and thus the homomorphism
\[
\iota_*: \pi_1(\mathbb{S}^{\dim M-1}) \longrightarrow \pi_1(\Sigma_{\kappa})
\]
vanishes. From the exactness of the sequence, it follows that
\[
\pi_* : \pi_1(\Sigma_{\kappa}) \longrightarrow \pi_1(M)
\]
is an isomorphism. By the Hurewicz theorem, the induced map on homology
\[
\pi_*: H_1(\Sigma_\kappa) \longrightarrow H_1(M)
\]
is an isomorphism as well. Since \( [\gamma] \) is null-homologous in \( M \), it follows that \( [\Gamma] \) is trivial in \( H_1(\Sigma_{\kappa}) \), i.e., \( \Gamma \) is null-homologous in \( \Sigma_{\kappa} \).

Then the corresponding invariant probability measure \( \mu_{\Gamma} \) satisfies
\begin{equation}\label{eq: action of probability measure of Gamma is negative}
    S_{L+\kappa}(\mu_{\Gamma}) := \int_{\Sigma_{\kappa}} \big( L(x,v) + \kappa \big) \, \mathrm{d}\mu_{\Gamma} = \frac{S_{L + \kappa}(\gamma)}{T} \leq 0,
\end{equation}
where \( T \) denotes the period of \( \gamma \). Thus, \( [\mu_{\Gamma}] = 0 \) in the sense of \Cref{r: defi nullhomologous prob measure}, since \( \Gamma \) is null-homologous.

From here on, we follow the argument on pages 10–11 of~\cite{ConMacPat2004}. Let \( \mu_{\ell} \) denote the Liouville measure induced by the volume form associated with the canonical symplectic form \( \lambda \) on \( \Sigma_{\kappa} \), which also satisfies \( [\mu_{\ell}] = 0 \). Since \( \mu_{\ell} \) is invariant under the involution \( v \mapsto -v \), the transformation rule for integrals implies
\[
\int_{\Sigma_{\kappa}} \alpha_x(v)\, \mathrm{d}\mu_{\ell} = 0.
\]
Using the identity
\[
L(x,v) + \kappa = 2\kappa - \alpha_x(v) \quad \text{for all } (x,v) \in \Sigma_{\kappa},
\]
we obtain
\begin{equation} \label{eq:action_Liouville_positive}
S_{L+\kappa}(\mu_{\ell}) := \int_{\Sigma_{\kappa}} \big( L(x,v) + \kappa \big) \, \mathrm{d}\mu_{\ell} = 2\kappa > 0.
\end{equation}
Now, using \eqref{eq: action of probability measure of Gamma is negative}, \eqref{eq:action_Liouville_positive}, and the fact that \( [\mu_{\ell}] = [\mu_{\Gamma}] = 0 \) in \( H_1(\Sigma_{\kappa}, \mathbb{R}) \), there exists a constant \(0 \leq A \leq 1  \) such that the convex combination
\[
\nu := A \cdot \mu_{\Gamma} + (1-A) \, \cdot \mu_{\ell}
\]
defines a \( \varPhi^t_{g,\mathrm{d}\alpha} \)-invariant probability measure supported on \( \Sigma_{\kappa} \), and satisfies
\[
\int_{\Sigma_{\kappa}} \lambda(Z_E)\, \mathrm{d}\nu = 0.
\]
Here we use the well-known identity (see, for example, \cite[Lemma 3.4]{ConMacPat2004}):
\[
\lambda_{(x,v)}((Z_E)_{(x,v)}) = L(x,v) + \kappa \quad \text{for all } (x,v) \in \Sigma_{\kappa}.
\]
\end{proof}
\bibliographystyle{abbrv}
	\bibliography{ref}
	
\end{document}